\newtheorem{theorem}{Theorem}[section]
\newtheorem{proposition}[theorem]{Proposition}
\newtheorem{corollary}[theorem]{Corollary}
\newtheorem{claim}[theorem]{Claim}
\newtheorem{question}[theorem]{Question}
\newtheorem{proof}{\textmd{\textit{Proof.}}}
\newtheorem{oproof}{\textmd{\textit{Outline of proof.}}}
\newtheorem{remark}[theorem]{Remark}
\newtheorem{example}[theorem]{Example}
\newtheorem{definition}[theorem]{Definition}
\newtheorem{fread}{Further Reading}
\newcommand{\qedd}{\hfill \Box}
\newcommand{\ve}{\varepsilon}
\newcommand{\del}{\partial}
\newcommand{\lra}{\longrightarrow}
\renewcommand{\det}{\ensuremath{\mathrm{det}}}
\newcommand{\N}{\ensuremath{\mathbb{N}}}
\newcommand{\R}{\ensuremath{\mathbb{R}}}
\newcommand{\Z}{\ensuremath{\mathbb{Z}}}
\newcommand{\bbM}{\ensuremath{\mathbb{M}}}
\newcommand{\Sph}{\ensuremath{\mathbb{S}}}
\newcommand{\Hy}{\ensuremath{\mathbb{H}}}
\newcommand{\cA}{\ensuremath{\mathcal{A}}}
\newcommand{\cC}{\ensuremath{\mathcal{C}}}
\newcommand{\cF}{\ensuremath{\mathcal{F}}}
\newcommand{\cK}{\ensuremath{\mathcal{K}}}
\newcommand{\cP}{\ensuremath{\mathcal{P}}}
\newcommand{\cR}{\ensuremath{\mathcal{R}}}
\newcommand{\cU}{\ensuremath{\mathcal{U}}}
\newcommand{\bs}{\ensuremath{\mathbf{s}}}
\newcommand{\bA}{\ensuremath{\mathbf{A}}}
\newcommand{\bJ}{\ensuremath{\mathbf{J}}}
\newcommand{\bS}{\ensuremath{\mathbf{S}}}
\newcommand{\DC}{\ensuremath{\mathcal{DC}}}
\newcommand{\MCP}{\ensuremath{\mathsf{MCP}}}
\newcommand{\CD}{\ensuremath{\mathsf{CD}}}
\def\diam{\mathop{\mathrm{diam}}\nolimits}
\def\vol{\mathop{\mathrm{vol}}\nolimits}
\def\area{\mathop{\mathrm{area}}\nolimits}
\def\Hess{\mathop{\mathrm{Hess}}\nolimits}
\def\supp{\mathop{\mathrm{supp}}\nolimits}
\def\Ent{\mathop{\mathrm{Ent}}\nolimits}
\def\Ric{\mathop{\mathrm{Ric}}\nolimits}
\def\Id{\mathop{\mathrm{Id}}\nolimits}
\def\tr{\mathop{\mathrm{tr}}\nolimits}
\def\ac{\mathop{\mathrm{ac}}\nolimits}
\title{Ricci curvature, entropy and optimal transport}
\author{Shin-ichi OHTA\thanks{
Partly supported by the Grant-in-Aid for Young Scientists (B) 20740036.}\\
{\normalsize Department of Mathematics, Faculty of Science, Kyoto University,}\\
{\normalsize  Kyoto 606-8502, JAPAN (e-mail: {\sf sohta@math.kyoto-u.ac.jp})}}
\date{}
\begin{document}

\maketitle

\begin{abstract}
This is the lecture notes on the interplay between optimal transport
and Riemannian geometry.
On a Riemannian manifold, the convexity of entropy along optimal transport
in the space of probability measures characterizes lower bounds
of the Ricci curvature.
We then discuss geometric properties of general metric measure spaces
satisfying this convexity condition.
\medskip

\noindent {\bf Mathematics Subject Classification (2000)}: 53C21, 53C23, 53C60,
28A33, 28D20
\smallskip

\noindent {\bf Keywords}: Ricci curvature, entropy, optimal transport,
curvature-dimension condition
\end{abstract}

\section{Introduction}%%%%%%%%%%%%%%%

This article is extended notes based on the author's lecture series
in summer school at Universit\'e Joseph Fourier, Grenoble:
`Optimal Transportation: Theory and Applications'.
The aim of these five lectures (corresponding to Sections~\ref{sc:intro}--\ref{sc:Fin})
was to review the recent impressive development
on the interplay between optimal transport theory and Riemannian geometry.
Ricci curvature and entropy are the key ingredients.
See \cite{Losig} for a survey in the same spirit with a slightly different
selection of topics.

Optimal transport theory is concerned with the behavior of transport
between two probability measures in a metric space.
We say that such transport is optimal if it minimizes a certain cost function
typically defined from the distance of the metric space.
Optimal transport naturally inherits the geometric structure of the
underlying space, especially Ricci curvature plays a crucial role for
describing optimal transport in Riemannian manifolds.
In fact, optimal transport is always performed along geodesics,
and we obtain Jacobi fields as their variational vector fields.
The behavior of these Jacobi fields is controlled
by the Ricci curvature as is usual in comparison geometry.
In this way, a lower Ricci curvature bound turns out to be equivalent
to a certain convexity property of entropy in the space of probability measures.
The latter convexity condition is called the curvature-dimension condition,
and it can be formulated without using the differentiable structure.
Therefore the curvature-dimension condition can be regarded as a
`definition' of a lower Ricci curvature bound for general metric measure spaces,
and implies many analogous properties in an interesting way.

A prerequisite is the basic knowledge of optimal transport theory
and Wasserstein geometry.
Riemannian geometry is also necessary in Sections~\ref{sc:intro}, \ref{sc:CD},
and is helpful for better understanding of the other sections.
We refer to \cite{AGS}, \cite{Vi1}, \cite{Vi2} and other articles in this proceeding
for optimal transport theory, \cite{CE}, \cite{Ch} and \cite{Sak}
for the basics of (comparison) Riemannian geometry.
We discuss Finsler geometry in Section~\ref{sc:Fin}, for which we refer to
\cite{BCS}, \cite{Shlec} and \cite{Oint}.
Besides them, main references are \cite{CMS1}, \cite{CMS2}, \cite{vRS},
\cite{StI}, \cite{StII}, \cite{LVI}, \cite{LVII} and \cite[Chapter~III]{Vi2}.

The organization of this article is as follows.
After summarizing some notations we use, Section~\ref{sc:intro} is devoted to
the definition of the Ricci curvature of Riemannian manifolds
and to the classical Bishop-Gromov volume comparison theorem.
In Section~\ref{sc:CD}, we start with the Brunn-Minkowski inequalities
in (unweighted or weighted) Euclidean spaces, and explain the equivalence between
a lower (weighted) Ricci curvature bound for a (weighted) Riemannian manifold
and the curvature-dimension condition.
In Section~\ref{sc:stab}, we give the precise definition of the curvature-dimension
condition for metric measure spaces, and see that it is stable under the measured
Gromov-Hausdorff convergence.
Section~\ref{sc:appl} is concerned with several geometric applications of the
curvature-dimension condition followed by related open questions.
In Section~\ref{sc:Fin}, we verify that this kind of machinery is useful
also in Finsler geometry.
We finally discuss three related topics in Section~\ref{sc:rel}.
Interested readers can find more references in Further Reading at the end of each section
(except the last section).

Some subjects in this article are more comprehensively discussed in \cite[Part III]{Vi2}.
Despite these inevitable overlaps with Villani's massive book,
we try to argue in a more geometric way, and mention recent development.
Analytic applications of the curvature-dimension condition are not dealt with
in these notes, for which we refer to \cite{LVII}, \cite{LVI} and
\cite[Chapter~III]{Vi2} among others.

I would like to express my gratitude to the organizers for the kind invitation to
the fascinating summer school, and to all the audience for their attendance and interest.
I also thank the referee for careful reading and valuable suggestions.

\section{Notations}\label{sc:not}%%%%%%%%%%%%%%%%%%%%%%%

Throughout the article except Section~\ref{sc:Fin}, $(M,g)$ is an $n$-dimensional,
connected, complete $C^{\infty}$-Riemannian manifold without boundary
such that $n \ge 2$, $\vol_g$ stands for the Riemannian volume measure of $g$.
A {\it weighted Riemannian manifold} $(M,g,m)$ will mean a Riemannian manifold
$(M,g)$ endowed with a conformal deformation $m=e^{-\psi}\vol_g$
of $\vol_g$ with $\psi \in C^{\infty}(M)$.
Similarly, a weighted Euclidean space $(\R^n,\|\cdot\|,m)$ will be
a Euclidean space with a measure $m=e^{-\psi}\vol_n$, where $\vol_n$
stands for the $n$-dimensional Lebesgue measure.

A metric space is called a {\it geodesic space} if any two points $x,y \in X$ can be
connected by a rectifiable curve $\gamma:[0,1] \lra X$ of length $d(x,y)$
with $\gamma(0)=x$ and $\gamma(1)=y$.
Such minimizing curves parametrized proportionally to arc length are called
{\it minimal geodesics}.
The open ball of center $x$ and radius $r$ will be denoted by $B(x,r)$.
We remark that, thanks to the Hopf-Rinow theorem (cf.\ \cite[Theorem~2.4]{Bal}),
a complete, locally compact geodesic space is proper, i.e.,
every bounded closed set is compact.

In this article, we mean by a {\it metric measure space} a triple $(X,d,m)$
consisting of a complete, separable geodesic space $(X,d)$
and a Borel measure $m$ on it.
Our definition of the curvature-dimension condition will include
the additional (but natural) condition that $0<m(B(x,r))<\infty$ holds
for all $x \in X$ and $0<r<\infty$.
We extend $m$ to an outer measure in the Brunn-Minkowski inequalities
(Theorems~\ref{th:BMn}, \ref{th:BMN}, \ref{th:gBM},
see Remark~\ref{rm:mble} for more details).

For a complete, separable metric space $(X,d)$,
$\cP(X)$ stands for the set of Borel probability measures on $X$.
Define $\cP_2(X) \subset \cP(X)$ as the set of measures of finite second moment
(i.e., $\int_X d(x,y)^2 \,d\mu(y)<\infty$ for some (and hence all) $x \in X$).
We denote by $\cP_b(X) \subset \cP_2(X)$, $\cP_c(X) \subset \cP_b(X)$
the sets of measures of bounded or compact support, respectively.
Given a measure $m$ on $X$, denote by $\cP^{\ac}(X,m) \subset \cP(X)$
the set of absolutely continuous measures with respect to $m$.
Then $d^W_2$  stands for the
$L^2$-{\it $($Kantorovich-Rubinstein-$)$Wasserstein distance} of $\cP_2(X)$.
The push-forward of a measure $\mu$ by a map $\cF$ will be written as
$\cF_{\sharp}\mu$.

As usual in comparison geometry, the following functions will frequently appear in our discussions.
For $K \in \R$, $N \in (1,\infty)$ and $0<r$ ($<\pi\sqrt{(N-1)/K}$ if $K>0$), we set
\begin{equation}\label{eq:bs}
\bs_{K,N}(r) := \left\{
 \begin{array}{cl}
 \sqrt{(N-1)/K} \sin (r\sqrt{K/(N-1)}) & {\rm if}\ K>0, \vspace{1mm}\\
 r & {\rm if}\ K=0, \vspace{1mm}\\
 \sqrt{-(N-1)/K} \sinh (r\sqrt{-K/(N-1)}) & {\rm if}\ K<0.
 \end{array} \right.
\end{equation}
This is the solution to the differential equation
\begin{equation}\label{eq:bseq}
\bs_{K,N}''+\frac{K}{N-1}\bs_{K,N}=0
\end{equation}
with the initial conditions $\bs_{K,N}(0)=0$ and $\bs'_{K,N}(0)=1$.
For $n \in \N$ with $n \ge 2$, $\bs_{K,n}(r)^{n-1}$ is proportional to
the area of the sphere of radius $r$ in the $n$-dimensional space form
of constant sectional curvature $K/(n-1)$
(see Theorem~\ref{th:BG} and the paragraph after it).
In addition, using $\bs_{K,N}$, we define
\begin{equation}\label{eq:beta}
\beta^t_{K,N}(r) :=\bigg( \frac{\bs_{K,N}(tr)}{t\bs_{K,N}(r)} \bigg)^{N-1}, \qquad
 \beta^t_{K,\infty}(r) :=e^{K(1-t^2)r^2/6}
\end{equation}
for $K,N,r$ as above and $t \in (0,1)$.
This function plays a vital role in the key infinitesimal inequality $(\ref{eq:J})$ of
the curvature-dimension condition.

\section{Ricci curvature and comparison theorems}\label{sc:intro}%%%%%%%

We begin with the basic concepts of curvature in Riemannian geometry and
several comparison theorems involving lower bounds of the Ricci curvature.
Instead of giving the detailed definition, we intend to explain
the geometric intuition of the sectional and Ricci curvatures
through comparison geometry.

Curvature is one of the most important quantities in Riemannian geometry.
By putting some conditions on the value of the (sectional or Ricci) curvature,
we obtain various quantitative and qualitative controls of distance,
measure, geodesics and so forth.
Comparison geometry is specifically interested in spaces whose curvature
is bounded by a constant from above or below.
In other words, we consider a space which is more positively or negatively
curved than a space form of constant curvature, and compare these spaces
from various viewpoints.

The $n$-dimensional (simply connected) {\it space form} $\bbM^n(k)$ of constant
sectional curvature $k \in \R$ is the unit sphere $\Sph^n$ for $k=1$;
the Euclidean space $\R^n$ for $k=0$; and the hyperbolic space $\Hy^n$ for $k=-1$.
Scaling gives general space forms for all $k \in \R$, e.g., $\bbM^n(k)$ for $k>0$
is the sphere of radius $1/\sqrt{k}$ in $\R^{n+1}$ with the induced
Riemannian metric.

\subsection{Sectional curvature}%%%%%%%%%%%%

Given linearly independent tangent vectors $v,w \in T_xM$,
the {\it sectional curvature} $\cK(v,w) \in \R$ reflects the asymptotic behavior
of the distance function $d(\gamma(t),\eta(t))$ near $t=0$ between geodesics
$\gamma(t)=\exp_x(tv)$ and $\eta(t)=\exp_x(tw)$.
That is to say, the asymptotic behavior of $d(t):=d(\gamma(t),\eta(t))$
near $t=0$ is same as the distance between geodesics,
with the same speed and angle between them,
in the space form of curvature $k=\cK(v,w)$.
(See Figure~1 which represents isometric embeddings of $\gamma$ and $\eta$
into $\R^2$ such that $d(\gamma(t),\eta(t))$ coincides with the Euclidean distance.)

%%%%%%%%%%%%%%%%%%%
\begin{center}
\begin{picture}(400,200)
\put(180,10){Figure~1}
\thicklines
\qbezier(70,70)(30,130)(30,190)
\qbezier(70,70)(110,130)(110,190)
\put(55,35){$\cK>0$}
\put(29,120){$\gamma$}
\put(105,120){$\eta$}
\put(66,58){$x$}

\put(200,70){\line(-1,3){40}}
\put(200,70){\line(1,3){40}}
\put(185,35){$\cK=0$}
\put(169,120){$\gamma$}
\put(225,120){$\eta$}
\put(196,58){$x$}

\qbezier(330,70)(325,140)(290,190)
\qbezier(330,70)(335,140)(370,190)
\put(315,35){$\cK<0$}
\put(308,120){$\gamma$}
\put(345,120){$\eta$}
\put(326,58){$x$}
\end{picture}
\end{center}
%%%%%%%%%%%%%%%%%%%

Assuming $\|v\|=\|w\|=1$ for simplicity, we can compute $d(t)$
in the space form $\bbM^n(k)$ by using the spherical/Euclidean/hyperbolic
law of cosines as (cf.\ \cite[Section~IV.1]{Sak})
\[ \begin{array}{rll}
\cos\big( \sqrt{k}d(t) \big)
&=\cos^2(\sqrt{k}t) +\sin^2(\sqrt{k}t)\cos\angle(v,w) & {\rm for}\ k>0, \\
d(t)^2
&= 2t^2-2t^2 \cos\angle(v,w) & {\rm for}\ k=0, \\
\cosh\big( \sqrt{-k}d(t) \big)
&=\cosh^2(\sqrt{-k}t) -\sinh^2(\sqrt{-k}t)\cos\angle(v,w) & {\rm for}\ k<0.
\end{array} \]
Observe that the dimension $n$ does not appear in these formulas.
The sectional curvature $\cK(v,w)$ depends only on the $2$-plane (in $T_xM$)
spanned by $v$ and $w$, and coincides with the Gaussian curvature at $x$ if $n=2$.

More precise relation between curvature and geodesics can be described
through Jacobi fields.
A $C^{\infty}$-vector field $J$ along a geodesic $\gamma:[0,l] \lra M$
is called a {\it Jacobi field} if it solves the {\it Jacobi equation}
\begin{equation}\label{eq:Ja}
D_{\dot{\gamma}}D_{\dot{\gamma}}J(t)
 +R\big( J(t),\dot{\gamma}(t) \big)\dot{\gamma}(t)=0
\end{equation}
for all $t \in [0,l]$.
Here $D_{\dot{\gamma}}$ denotes the covariant derivative along $\gamma$,
and $R:T_xM \otimes T_xM \lra T^*_xM \otimes T_xM$
is the {\it curvature tensor} determined by the Riemannian metric $g$.
Another equivalent way of introducing a Jacobi field is to define it as the
variational vector field $J(t)=(\del\sigma/\del s)(0,t)$ of some
$C^{\infty}$-variation $\sigma:(-\ve,\ve) \times [0,l] \lra M$ such that
$\sigma(0,t)=\gamma(t)$ and that every $\sigma_s:=\sigma(s,\cdot)$ is geodesic.
(This characterization of Jacobi fields needs only the class of geodesics,
and then it is possible to regard $(\ref{eq:Ja})$ as the definition of $R$.)
For linearly independent vectors $v,w \in T_xM$,
the precise definition of the sectional curvature is
\[ \cK(v,w):=\frac{\langle R(w,v)v,w \rangle}{\|v\|^2 \|w\|^2-\langle v,w \rangle^2}. \]
It might be helpful to compare $(\ref{eq:Ja})$ with $(\ref{eq:bseq})$.

\begin{remark}[Alexandrov spaces]\label{rm:Alex}
Although it is not our main subject, we briefly comment on
comparison geometry involving lower bounds of the sectional curvature.
As the sectional curvature is defined for each two-dimensional subspace
in tangent spaces, it controls the behavior of two-dimensional subsets in $M$,
in particular, triangles.
The classical {\it Alexandrov-Toponogov comparison theorem} asserts that
$\cK \ge k$ holds for some $k \in \R$ if and only if every geodesic triangle in $M$
is thicker than the triangle with the same side lengths in $\bbM^2(k)$.
See Figure~2 for more details, where $M$ is of $\cK \ge k$,
and then $d(x,w) \ge d(\tilde{x},\tilde{w})$ holds between geodesic triangles
with the same side lengths ($d(x,y)=d(\tilde{x},\tilde{y})$,
$d(y,z)=d(\tilde{y},\tilde{z})$, $d(z,x)=d(\tilde{z},\tilde{x})$)
as well as $d(y,w)=d(\tilde{y},\tilde{w})$.

%%%%%%%%%%%%%%%%%%%%%%
\begin{center}
\begin{picture}(400,180)
\put(180,10){Figure~2}
\put(160,40){$d(x,w) \ge d(\tilde{x},\tilde{w})$}

\qbezier(100,50)(60,90)(50,150)
\qbezier(100,50)(140,90)(150,150)
\qbezier(50,150)(100,160)(150,150)
\put(95,35){$x$}
\put(45,158){$y$}
\put(70,162){$w$}
\put(145,158){$z$}
\put(20,100){$M \supset$}

\put(300,50){\line(1,2){50}}
\put(300,50){\line(-1,2){50}}
\put(250,150){\line(1,0){100}}
\put(297,35){$\tilde{x}$}
\put(245,158){$\tilde{y}$}
\put(345,158){$\tilde{z}$}
\put(270,158){$\tilde{w}$}
\put(350,100){$\subset \bbM^2(k)$}

\thicklines
\put(74.2,153.3){\line(1,-4){25.7}}
\put(275,150){\line(1,-4){25}}
\end{picture}
\end{center}
%%%%%%%%%%%%%%%%%%%%%

The point is that we can forget about the dimension of $M$,
because the sectional curvature cares only two-dimensional subsets.
The above triangle comparison property is written by using only distance
and geodesics, so that it can be formulated for metric spaces having
enough geodesics (i.e., geodesic spaces).
Such spaces are called {\it Alexandrov spaces}, and there are deep
geometric and analytic theories on them
(see \cite{BGP}, \cite{OtS}, \cite[Chapters~4, 10]{BBI}).
We discuss optimal transport and Wasserstein geometry
on Alexandrov spaces in Subsection~\ref{ssc:Al}.
\end{remark}

\subsection{Ricci curvature}%%%%%%%%%%%

Given a unit vector $v \in T_xM$, we define the {\it Ricci curvature} of $v$
as the trace of the sectional curvature $\cK(v,\cdot)$,
\[ \Ric(v):=\sum_{i=1}^{n-1} \cK(v,e_i), \]
where $\{ e_i \}_{i=1}^{n-1} \cup \{v\}$ is an orthonormal basis of $T_xM$.
We will mean by $\Ric \ge K$ for $K \in \R$ that $\Ric(v) \ge K$ holds
for all unit vectors $v \in TM$.
As we discussed in the previous subsection,
sectional curvature controls geodesics and distance.
Ricci curvature has less information since we take the trace,
and naturally controls the behavior of the measure $\vol_g$.

The following is one of the most important theorems
in comparison Riemannian geometry, that asserts that a lower bound
of the Ricci curvature implies an upper bound of the volume growth.
The proof is done via calculations involving Jacobi fields.
Recall $(\ref{eq:bs})$ for the definition of the function $\bs_{K,n}$.

\begin{theorem}[Bishop-Gromov volume comparison]\label{th:BG}
Assume that $\Ric \ge K$ holds for some $K \in \R$.
Then we have, for any $x \in M$ and $0<r<R\ (\le \pi\sqrt{(n-1)/K}$ if $K>0)$,
\begin{equation}\label{eq:BG}
\frac{\vol_g(B(x,R))}{\vol_g(B(x,r))} \le
 \frac{\int_0^R \bs_{K,n}(t)^{n-1}\,dt}{\int_0^r \bs_{K,n}(t)^{n-1}\,dt}.
\end{equation}
\end{theorem}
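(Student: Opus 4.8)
The plan is to pass to geodesic polar coordinates around $x$ and reduce the inequality to a one–dimensional comparison for the area density of geodesic spheres, governed by a matrix Riccati equation. Fix $x \in M$ and, for a unit vector $\theta$ in the unit sphere $U_xM \subset T_xM$, let $c(\theta) \in (0,\infty]$ be the cut time along $\gamma_\theta(r):=\exp_x(r\theta)$. For $0<r<c(\theta)$ write $\cA(r,\theta)$ for the Jacobian of $\exp_x$ at $r\theta$, so that $\vol_g(B(x,\rho)) = \int_{U_xM}\int_0^{\min\{\rho,c(\theta)\}} \cA(r,\theta)\,dr\,d\theta$. Choosing Jacobi fields $J_1,\dots,J_{n-1}$ along $\gamma_\theta$ with $J_i(0)=0$ and $D_{\dot\gamma}J_i(0)=e_i$, where $\{e_i\}_{i=1}^{n-1}\cup\{\theta\}$ is an orthonormal basis of $T_xM$, one has $\cA(r,\theta)=\det\big(J_1(r),\dots,J_{n-1}(r)\big)$ in a parallel orthonormal frame. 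Assembling the $J_i(r)$ into an $(n-1)\times(n-1)$ matrix $A(r)$ and setting $U(r):=A'(r)A(r)^{-1}$ — which is the shape operator of the geodesic sphere, equivalently $\Hess\big(d(x,\cdot)\big)$ along $\gamma_\theta$, hence symmetric — the Jacobi equation $(\ref{eq:Ja})$ becomes the Riccati equation $U'+U^2+\cR_\theta=0$ with $\cR_\theta(v):=R(v,\dot\gamma_\theta)\dot\gamma_\theta$.

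Next I would take traces. Put $u(r):=\tr U(r)=(\log\cA)'(r,\theta)$. Using $\tr(U^2)\ge(\tr U)^2/(n-1)$ (Cauchy–Schwarz, valid since $U$ is symmetric) and $\tr\cR_\theta=\Ric(\dot\gamma_\theta)\ge K$, the trace of the Riccati equation gives the differential inequality $u'+\dfrac{u^2}{n-1}+K\le 0$ on $(0,c(\theta))$. In the model, the density $\bs_{K,n}(r)^{n-1}$ has logarithmic derivative $u_K(r):=(n-1)\bs_{K,n}'(r)/\bs_{K,n}(r)$, and by $(\ref{eq:bseq})$ one checks $u_K'+\dfrac{u_K^2}{n-1}+K=0$. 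Since $u(r),u_K(r)\sim(n-1)/r$ as $r\downarrow 0$, a standard Riccati (Sturm) comparison argument applied to $u-u_K$ yields $u(r)\le u_K(r)$ for all $r<c(\theta)$. Hence $\dfrac{d}{dr}\log\!\big(\cA(r,\theta)/\bs_{K,n}(r)^{n-1}\big)\le 0$, i.e. $r\mapsto \cA(r,\theta)/\bs_{K,n}(r)^{n-1}$ is non-increasing on $(0,c(\theta))$; letting $r\downarrow 0$ and using $\cA(r,\theta)/r^{n-1}\to 1$, $\bs_{K,n}(r)^{n-1}/r^{n-1}\to 1$ also recovers Bishop's pointwise bound $\cA(r,\theta)\le\bs_{K,n}(r)^{n-1}$.

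To globalize, extend $\cA(\cdot,\theta)$ by $0$ on $[c(\theta),\infty)$. Because $r<c(\theta)$ and $0<r'<r$ imply $r'<c(\theta)$, the extended function $r\mapsto \cA(r,\theta)/\bs_{K,n}(r)^{n-1}$ is still non-increasing on all of $(0,R)$. Integrating over $\theta$, the function $f(r):=\int_{U_xM}\cA(r,\theta)\,d\theta$ inherits monotonicity of $f(r)/\bs_{K,n}(r)^{n-1}$, and $\vol_g(B(x,\rho))=\int_0^\rho f(t)\,dt$ (the part of $\exp_x$ beyond the cut locus contributes a $\vol_g$-null set). It remains to invoke the elementary lemma: if $\phi,\psi>0$ and $\phi/\psi$ is non-increasing, then $\rho\mapsto \big(\int_0^\rho\phi\big)\big/\big(\int_0^\rho\psi\big)$ is non-increasing — proved by differentiating and noting $\int_0^\rho\big(\phi(t)\psi(\rho)-\phi(\rho)\psi(t)\big)\,dt\le 0$. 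Applying it with $\phi=f$ and $\psi=\bs_{K,n}^{\,n-1}$ gives exactly $(\ref{eq:BG})$.

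The main obstacle is the careful handling of the cut locus. The Jacobi-field formula for $\cA$ is valid only before the first conjugate point, so one must (i) know that defining $\cA\equiv 0$ past $c(\theta)$ still captures all of $\vol_g(B(x,\rho))$, which uses that $\exp_x$ is a diffeomorphism from the star-shaped open region $\{r\theta:0<r<c(\theta)\}$ onto the complement of a null set, and (ii) check that the monotonicity of the ratio survives the jump down to $0$ at $r=c(\theta)$, which is precisely where star-shapedness of the cut-time function enters. The Riccati comparison near $r=0$, where $u$ blows up, needs the correct asymptotic normalization but is otherwise routine.
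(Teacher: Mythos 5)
Your proposal is correct and follows essentially the same route as the paper: Jacobi fields along radial geodesics, the matrix Riccati equation for $U=A'A^{-1}$, the Cauchy--Schwarz trace bound $\tr(U^2)\ge(\tr U)^2/(n-1)$ together with $\Ric\ge K$, monotonicity of the density ratio against $\bs_{K,n}^{\,n-1}$, and the integral-ratio lemma. The only cosmetic differences are that you run the comparison at the level of the first-order Riccati inequality for $u=\tr U$ while the paper passes to the equivalent second-order ODE for $(\det\cA)^{1/2(n-1)}$ (which is just your $\cA^{1/(n-1)}$, since the paper's $\cA$ is the Gram matrix), and you are slightly more explicit than the paper about extending the density by zero past the cut time.
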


\begin{proof}
Given a unit vector $v \in T_xM$, we fix a unit speed minimal geodesic
$\gamma:[0,l] \lra M$ with $\dot{\gamma}(0)=v$ and 
an orthonormal basis $\{ e_i \}_{i=1}^{n-1} \cup \{v\}$ of $T_xM$.
Then we consider the variation $\sigma_i:(-\ve,\ve) \times [0,l] \lra M$ defined by
$\sigma_i(s,t):=\exp_x(tv+ste_i)$ for $i=1,\ldots,n-1$,
and introduce the Jacobi fields $\{ J_i \}_{i=1}^{n-1}$ along $\gamma$ given by
\[ J_i(t):=\frac{\del \sigma_i}{\del s}(0,t)
 =D(\exp_x)_{tv}(te_i) \in T_{\gamma(t)}M \]
(see Figure~3, where $s>0$).

%%%%%%%%%%%%%%%%%%%%%
\begin{center}
\begin{picture}(400,120)
\put(180,10){Figure~3}

\thicklines
\qbezier(100,50)(200,60)(300,50)
\qbezier(100,50)(200,90)(290,80)

\put(140,53){\vector(0,1){12}}
\put(180,55){\vector(0,1){24}}
\put(220,55){\vector(0,1){36}}
\put(260,53){\vector(0,1){48}}

\put(85,45){$x$}
\put(310,45){$\gamma$}
\put(300,75){$\sigma_i(s,\cdot)$}
\put(245,98){$J_i$}

\end{picture}
\end{center}
%%%%%%%%%%%%%%%%%%%%%

Note that $J_i(0)=0$, $D_{\dot{\gamma}}J_i(0)=e_i$,
$\langle J_i,\dot{\gamma} \rangle \equiv 0$ (by the Gauss lemma) and
$\langle D_{\dot{\gamma}}J_i,\dot{\gamma} \rangle \equiv 0$
(by $(\ref{eq:Ja})$ and
$\langle R(J_i,\dot{\gamma})\dot{\gamma},\dot{\gamma} \rangle \equiv 0$).
We also remark that $\gamma(t)$ is not conjugate to $x$ for all $t \in (0,l)$
(and hence $\{J_i(t)\}_{i=1}^{n-1} \cup \{\dot{\gamma}(t)\}$ is a basis of
$T_{\gamma(t)}M$) since $\gamma$ is minimal.
Hence we find an $(n-1) \times (n-1)$ matrix $\cU(t)=(u_{ij}(t))_{i,j=1}^{n-1}$
such that $D_{\dot{\gamma}}J_i(t)=\sum_{j=1}^{n-1}u_{ij}(t)J_j(t)$ for $t \in (0,l)$.
We define two more $(n-1) \times (n-1)$ matrices
\[ \cA(t) :=\big( \langle J_i(t),J_j(t) \rangle \big)_{i,j=1}^{n-1}, \quad
 \cR(t) :=\Big( \big\langle R\big( J_i(t),\dot{\gamma}(t)\big) \dot{\gamma}(t),
 J_j(t) \big\rangle \Big)_{i,j=1}^{n-1}. \]
Note that $\cA$ and $\cR$ are symmetric matrices.
Moreover, we have $\tr(\cR(t)\cA(t)^{-1})=\Ric(\dot{\gamma}(t))$
as $\cA(t)$ is the matrix representation of the metric $g$
in the basis $\{J_i(t)\}_{i=1}^{n-1}$
of the orthogonal complement $\dot{\gamma}(t)^{\perp}$ of $\dot{\gamma}(t)$.
To be precise, choosing an $(n-1) \times (n-1)$ matrix $\cC=(c_{ij})_{i,j=1}^{n-1}$
such that $\{ \sum_{j=1}^{n-1}c_{ij}J_j(t) \}_{i=1}^{n-1}$ is orthonormal,
we observe $I_n=\cC \cA \cC^t$ ($\cC^t$ is the transpose of $\cC$) and
\begin{align*}
\Ric\big( \dot{\gamma}(t) \big)
&=\sum_{i,j,k=1}^{n-1} \big\langle R\big( c_{ij}J_j(t),\dot{\gamma}(t) \big)
 \dot{\gamma}(t),c_{ik}J_k(t) \big\rangle
 =\tr\big( \cC(t) \cR(t) \cC(t)^t \big) \\
&=\tr\big( \cR(t) \cA(t)^{-1} \big).
\end{align*}

\begin{claim}\label{cl:BG}
\begin{enumerate}[{\rm (a)}]
\item It holds that $\cU\cA=\cA\cU^t$.
In particular, we have $2\cU=\cA'\cA^{-1}$.
\item The matrix $\cU$ is symmetric and we have $\tr(\cU^2) \ge (\tr \cU)^2/(n-1)$.
\end{enumerate}
\end{claim}

\begin{proof}
(a) The first assertion easily follows from the Jacobi equation $(\ref{eq:Ja})$
and the symmetry of $\cR$, indeed,
\begin{align*}
\frac{d}{dt}\{ \langle D_{\dot{\gamma}}J_i,J_j \rangle
 -\langle J_i,D_{\dot{\gamma}}J_j \rangle \}
&= \langle D_{\dot{\gamma}} D_{\dot{\gamma}}J_i,J_j \rangle
 -\langle J_i,D_{\dot{\gamma}} D_{\dot{\gamma}}J_j \rangle \\
&= -\langle R(J_i,\dot{\gamma})\dot{\gamma},J_j \rangle
 +\langle J_i,R(J_j,\dot{\gamma})\dot{\gamma} \rangle
=0.
\end{align*}
Thus we have $\cA'=\cU\cA+\cA\cU^t=2\cU\cA$ which shows the second assertion.

(b) Recall that
\[ \frac{\del \sigma_i}{\del t}(0,t)=\dot{\gamma}(t), \qquad
 \frac{\del \sigma_i}{\del s}(0,t)=J_i(t) \]
hold for $t \in (0,l)$.
As $[\del/\del s,\del/\del t]=0$, we have
\[ D_{\dot{\gamma}}J_i(t)=D_t\bigg( \frac{\del \sigma_i}{\del s} \bigg)(0,t)
 =D_s\bigg( \frac{\del \sigma_i}{\del t} \bigg)(0,t). \]
Now, we introduce the function
\[ f:\exp_x \bigg( \bigg\{ tv+\sum_{i=1}^{n-1}s_i t e_i \,\Big|\,
 t \in [0,l],\ |s_i|<\ve \bigg\} \bigg) \lra \R \]
so that $f(\exp_x(tv+\sum_{i=1}^{n-1} s_i t e_i))=t$.
We derive from $\nabla f(\sigma_i(s,t))=(\del\sigma_i/\del t)(s,t)$ that
\[ D_s\bigg( \frac{\del \sigma_i}{\del t} \bigg)(0,t)
 =D_{J_i}(\nabla f)\big( \gamma(t) \big) =\nabla^2 f\big( J_i(t) \big), \]
where $\langle \nabla^2 f(w),w' \rangle=\Hess f(w,w')$.
This means that $\cU$ is the matrix presentation of the symmetric form
$\nabla^2 f$ (restricted in $\dot{\gamma}^{\perp}$) with respect to
the basis $\{ J_i \}_{i=1}^{n-1}$.
Therefore $\cU$ is symmetric.
By denoting the eigenvalues of $\cU$ by $\lambda_1,\ldots,\lambda_{n-1}$,
the Cauchy-Schwarz inequality shows that
\[ (\tr \cU)^2 =\bigg( \sum_{i=1}^{n-1}\lambda_i \bigg)^2
 \le (n-1) \sum_{i=1}^{n-1} \lambda_i^2 =(n-1)\tr(\cU^2). \]
$\hfill \diamondsuit$
\end{proof}

We calculate, by using Claim~\ref{cl:BG}(a),
\begin{align*}
\big[ (\det\cA)^{1/2(n-1)} \big]'
&= \frac{1}{2(n-1)}(\det\cA)^{1/2(n-1)-1} \cdot \det\cA \tr(\cA'\cA^{-1}) \\
&= \frac{1}{n-1}(\det\cA)^{1/2(n-1)} \tr\cU.
\end{align*}
Then Claim~\ref{cl:BG}(b) yields
\begin{align*}
\big[ (\det\cA)^{1/2(n-1)} \big]''
&= \frac{1}{(n-1)^2}(\det\cA)^{1/2(n-1)}(\tr\cU)^2
 +\frac{1}{n-1}(\det\cA)^{1/2(n-1)} \tr(\cU') \\
&\le \frac{1}{n-1}(\det\cA)^{1/2(n-1)}\{ \tr(\cU^2)+\tr(\cU') \}.
\end{align*}
We also deduce from Claim~\ref{cl:BG}(a) and $(\ref{eq:Ja})$ that
\[ \cU' =\frac{1}{2}\cA''\cA^{-1} -\frac{1}{2}(\cA'\cA^{-1})^2
 =\frac{1}{2}(-2\cR +2\cU\cA\cU)\cA^{-1} -2\cU^2
 =-\cR\cA^{-1}-\cU^2. \]
This implies the (matrix) {\it Riccati equation}
\[ \cU'+\cU^2+\cR \cA^{-1} =0. \]
Taking the trace gives
\[ (\tr \cU)' +\tr (\cU^2) +\Ric(\dot{\gamma})=0. \]
Thus we obtain from our hypothesis $\Ric \ge K$ the differential inequality
\begin{equation}\label{eq:Bish}
\big[ (\det\cA)^{1/2(n-1)} \big]'' \le -\frac{K}{n-1} (\det\cA)^{1/2(n-1)}.
\end{equation}
This is a version of the fundamental {\it Bishop comparison theorem}
which plays a prominent role in comparison geometry.
Comparing $(\ref{eq:Bish})$ with $(\ref{eq:bseq})$, we have
\begin{align*}
&\frac{d}{dt}\Big\{ \big[ (\det \cA)^{1/2(n-1)} \big]' \bs_{K,n}
 -(\det \cA)^{1/2(n-1)} \bs'_{K,n} \Big\} \\
&= \big[ (\det \cA)^{1/2(n-1)} \big]'' \bs_{K,n}
 -(\det \cA)^{1/2(n-1)} \bs''_{K,n}
 \le 0,
\end{align*}
and hence $(\det \cA)^{1/2(n-1)}/\bs_{K,n}$ is non-increasing.
Then integrating $\sqrt{\det \cA}$ in unit vectors $v \in T_xM$ implies
the {\it area comparison theorem}
\begin{equation}\label{eq:Bish2}
\frac{\area_g(S(x,R))}{\area_g(S(x,r))}
 \le \frac{\bs_{K,n}(R)^{n-1}}{\bs_{K,n}(r)^{n-1}},
\end{equation}
where $S(x,r):=\{ y \in M \,|\, d(x,y)=r \}$ and $\area_g$ stands for
the $(n-1)$-dimensional Hausdorff measure associated with $g$
(in other words, the volume measure of the $(n-1)$-dimensional Riemannian
metric of $S(x,r)$ induced from $g$).

Now, we integrate $(\ref{eq:Bish2})$ in the radial direction.
Set $\bA(t):=\area_g(S(x,t))$ and $\bS(t):=\bs_{K,n}(t)^{n-1}$,
and recall that $\bA/\bS$ is non-increasing.
Hence we obtain the key inequality
\begin{equation}\label{eq:BG-}
\int_0^r \bA \,dt \int_r^R \bS \,dt
 \ge \frac{\bA(r)}{\bS(r)} \int_0^r \bS \,dt \int_r^R \bS \,dt
 \ge \int_0^r \bS \,dt \int_r^R \bA \,dt.
\end{equation}
From here to the desired estimate $(\ref{eq:BG})$ is the easy calculation
as follows
\begin{align*}
&\vol_g\big( B(x,r) \big) \int_0^R \bs_{K,n}(t)^{n-1} \,dt
 =\int_0^r \bA \,dt \int_r^R \bS \,dt +\int_0^r \bA \,dt \int_0^r \bS \,dt \\
&\ge \int_0^r \bS \,dt \int_r^R \bA \,dt +\int_0^r \bA \,dt \int_0^r \bS \,dt
 = \vol_g\big( B(x,R) \big) \int_0^r \bs_{K,n}(t)^{n-1} \,dt.
\end{align*}
$\qedd$
\end{proof}

The sphere of radius $r$ in the space form $\bbM^n(k)$ has area
$a_n\bs_{(n-1)k,n}(r)^{n-1}$, where $a_n$ is the area of $\Sph^{n-1}$,
and the ball of radius $r$ has volume
$a_n\int_0^r \bs_{(n-1)k,n}(t)^{n-1} \,dt$.
Thus the right-hand side of $(\ref{eq:BG})$ ($(\ref{eq:Bish2})$, respectively)
coincides with the ratio of the volume of balls (the area of spheres, respectively)
of radius $R$ and $r$ in $\bbM^n(K/(n-1))$.

Theorem~\ref{th:BG} for $K>0$ immediately implies a diameter bound.
This ensures that the condition $R \le \pi\sqrt{(n-1)/K}$ in Theorem~\ref{th:BG}
is natural.

\begin{corollary}[Bonnet-Myers diameter bound]\label{cr:BG}
If $\Ric \ge K>0$, then we have
\begin{equation}\label{eq:BMy}
\diam M \le \pi\sqrt{\frac{n-1}{K}}.
\end{equation}
\end{corollary}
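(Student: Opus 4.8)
The plan is to read off the diameter bound from the infinitesimal Bishop comparison inequality $(\ref{eq:Bish})$ already established inside the proof of Theorem~\ref{th:BG}. Since $(M,g)$ is connected and complete, the Hopf--Rinow theorem guarantees that any two points are joined by a minimal geodesic; hence it suffices to prove that every unit-speed minimal geodesic $\gamma:[0,l]\lra M$ has $l\le D:=\pi\sqrt{(n-1)/K}$, and then $(\ref{eq:BMy})$ follows by taking the supremum over pairs of points. So I fix such a $\gamma$ with $\gamma(0)=x$ and work along it with the notation of the proof of Theorem~\ref{th:BG}.

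First I set $h(t):=(\det\cA(t))^{1/2(n-1)}$. As in that proof, minimality of $\gamma$ means $\gamma(t)$ is not conjugate to $x$ for $t\in(0,l)$, so $\{J_i(t)\}_{i=1}^{n-1}$ stays linearly independent and $h(t)>0$ on $(0,l)$; a Taylor expansion of $\exp_x$ at the origin gives $\cA(t)=t^2(\Id+o(1))$, whence $h(t)/\bs_{K,n}(t)\to 1$ as $t\to 0^+$. The proof of Theorem~\ref{th:BG} shows $h/\bs_{K,n}$ is non-increasing wherever both are positive, i.e.\ on $(0,\min\{l,D\})$, so together with the limit at $0^+$ we get $0<h(t)\le\bs_{K,n}(t)$ there. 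Suppose now, for contradiction, that $l>D$. Then $h$ is continuous on $[0,D]\subset[0,l)$ and $\bs_{K,n}(D)=0$, so letting $t\uparrow D$ forces $h(D)=0$, i.e.\ $\det\cA(D)=0$; hence $\{J_i(D)\}_{i=1}^{n-1}$ is linearly dependent, $D(\exp_x)_{Dv}$ is singular, and $\gamma(D)$ is conjugate to $x$ along $\gamma$. Since $D\in(0,l)$, this contradicts the minimality of $\gamma$ on $[0,l]$. Therefore $l\le D$, which completes the argument.

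There is no genuinely hard step here: the estimate is essentially immediate once the Bishop comparison $(\ref{eq:Bish})$ is in hand, and the only points needing care are the two standard facts already invoked in the proof of Theorem~\ref{th:BG} --- that vanishing of $\det\cA$ detects a conjugate point, and that a minimal geodesic carries no conjugate point in its interior --- the behaviour at the endpoint $t=D$ then being forced by continuity. For a route that avoids Theorem~\ref{th:BG} altogether one can instead use the second variation formula: along a minimal $\gamma:[0,l]\lra M$, the test fields $W_i(t):=\sin(\pi t/l)E_i(t)$, with $\{E_i\}_{i=1}^{n-1}$ a parallel orthonormal frame along $\gamma$ orthogonal to $\dot\gamma$, vanish at the endpoints, and their total index is $\sum_{i=1}^{n-1}I(W_i,W_i)=\tfrac{(n-1)\pi^2}{2l}-\int_0^l\sin^2(\pi t/l)\,\Ric(\dot\gamma(t))\,dt\le\tfrac{1}{2l}\bigl((n-1)\pi^2-Kl^2\bigr)$, which is negative as soon as $l>D$; then $I(W_i,W_i)<0$ for some $i$, contradicting that $\gamma$ minimizes. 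Either way, the conclusion is $\diam M\le\pi\sqrt{(n-1)/K}$.
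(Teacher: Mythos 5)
Your argument is correct, but it takes a genuinely different (and in fact more classical) route than the paper's. The paper derives the diameter bound as a corollary of the \emph{integrated} volume comparison: it fixes $R=\pi\sqrt{(n-1)/K}$, applies the conclusion of Theorem~\ref{th:BG} to show $\vol_g(B(x,R)\setminus B(x,R-\ve))/\vol_g(B(x,R))\to 0$, concludes $\area_g(S(x,R))=0$, and then argues that every point of $S(x,R)$ must therefore be a conjugate point of $x$, so no geodesic from $x$ is minimal beyond radius $R$. Your first argument instead goes back to the \emph{infinitesimal} Bishop inequality $(\ref{eq:Bish})$ and works pointwise along a single minimal geodesic: you use the monotonicity of $h/\bs_{K,n}$ together with the normalization $h(t)/\bs_{K,n}(t)\to 1$ at $t=0^+$ (which follows from $\cA(t)=t^2(\Id+O(t^2))$, since $J_i(0)=0$, $D_{\dot\gamma}J_i(0)=e_i$) to force $\det\cA(D)=0$ if the geodesic survives past $D$, producing a conjugate point in the interior and contradicting minimality. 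This is essentially the textbook Bishop--Myers argument and is arguably tighter than the paper's: it avoids the passage through areas of spheres and the slightly delicate inference from ``$\area_g(S(x,R))=0$'' to ``\emph{every} point of $S(x,R)$ is conjugate.'' Your second alternative, via the index form with test fields $\sin(\pi t/l)E_i(t)$, is the original Myers argument and is independent of Theorem~\ref{th:BG} altogether, at the cost of invoking the second variation formula, which the paper does not otherwise use. All three proofs are correct; the paper's has the pedagogical virtue of reusing the statement of Theorem~\ref{th:BG} rather than its internals, whereas yours is more self-contained at the level of Jacobi field estimates.
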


\begin{proof}
Put $R:=\pi\sqrt{(n-1)/K}$ and assume $\diam M \ge R$.
Given $x \in M$, Theorem~\ref{th:BG} implies that
\begin{align*}
\limsup_{\ve \downarrow 0}
 \frac{\vol_g(B(x,R) \setminus B(x,R-\ve))}{\vol_g(B(x,R))}
&=\limsup_{\ve \downarrow 0}
 \bigg\{ 1-\frac{\vol_g(B(x,R-\ve))}{\vol_g(B(x,R))} \bigg\} \\
&\le \limsup_{\ve \downarrow 0}
 \frac{\int_{R-\ve}^R \bs_{K,n}(t)^{n-1}\,dt}{\int_0^R \bs_{K,n}(t)^{n-1}\,dt}
 =0.
\end{align*}
This shows $\area_g(S(x,R))=0$ and hence $\diam M \le R$.
To be precise, it follows from $\area_g(S(x,R))=0$ that every point in $S(x,R)$
must be a conjugate point of $x$.
Therefore any geodesic emanating from $x$ is not minimal after
passing through $S(x,R)$, and hence $\diam M=R$.
(A more direct proof in terms of metric geometry can be found
in Theorem~\ref{th:BMy}(i).)
$\qedd$
\end{proof}

The bound $(\ref{eq:BMy})$ is sharp, and equality is achieved only by
the sphere in $\bbM^n(K/(n-1))$ of radius $\sqrt{(n-1)/K}$
(compare this with Theorem~\ref{th:sph}).

As we mentioned in Remark~\ref{rm:Alex}, lower sectional curvature bounds
are characterized by simple triangle comparison properties involving only distance,
and there is a successful theory of metric spaces satisfying them.
Then it is natural to ask the following question.

\begin{question}\label{Q:Ric}
How to characterize lower Ricci curvature bounds without using differentiable
structure?
\end{question}

This had been a long standing important question, and we will see
an answer in the next section (Theorem~\ref{th:CD}).
Such a condition naturally involves measure and dimension besides distance,
and should be preserved under the convergence of metric measure spaces
(see Section~\ref{sc:stab}).

\begin{fread}
See, for instances, \cite{CE}, \cite{Ch} and \cite{Sak}
for the fundamentals of Riemannian geometry and comparison theorems.
A property corresponding to the Bishop comparison theorem $(\ref{eq:Bish})$
was proposed as a lower Ricci curvature bound for metric measure spaces
by Cheeger and Colding~\cite{CC} (as well as Gromov~\cite{Gr}),
and used to study the limit spaces of Riemannian manifolds
with uniform lower Ricci curvature bounds.
The deep theory of such limit spaces is one of the main motivations for asking
Question~\ref{Q:Ric}, so that the stability deserves a particular interest
(see Section~\ref{sc:stab} for more details).
The systematic investigation of $(\ref{eq:Bish})$ in metric measure spaces
has not been done until \cite{Omcp} and \cite{StII}
where we call this property the measure contraction property.
We will revisit this in Subsection~\ref{ssc:mcp}.
Here we only remark that the measure contraction property is
strictly weaker than the curvature-dimension
condition.
\end{fread}

\section{A characterization of lower Ricci curvature bound via optimal transport}
\label{sc:CD}%%%%%%%%%

The Bishop-Gromov volume comparison theorem (Theorem~\ref{th:BG})
can be regarded as a concavity estimate of $\vol_g^{1/n}$
along the contraction of the ball $B(x,R)$ to its center $x$.
This is generalized to optimal transport between pairs of uniform distributions
(the Brunn-Minkowski inequality) and, moreover,
pairs of probability measures (the curvature-dimension condition).
Figure~4 represents the difference between contraction
and transport (see also Figures~5, 8).

%%%%%%%%%%%%%%%%%%%%%%%
\begin{center}
\begin{picture}(400,180)
\put(180,10){Figure~4}
\put(40,30){Bishop-Gromov}
\put(180,30){Brunn-Minkowski/Curvature-dimension}

\thicklines
\put(80,110){\circle{100}}
\put(80,110){\shade\circle{60}}
\put(80,150){\vector(0,-1){20}}
\put(80,70){\vector(0,1){20}}
\put(40,110){\vector(1,0){20}}
\put(120,110){\vector(-1,0){20}}
\put(79,109){\rule{2pt}{2pt}}

\put(210,110){\ellipse{40}{100}}
\put(320,110){\shade\ellipse{100}{40}}
\put(210,110){\vector(1,0){110}}
\put(210,75){\vector(4,1){110}}
\put(210,145){\vector(4,-1){110}}

\end{picture}
\end{center}
%%%%%%%%%%%%%%%%%%%%%%

The main theorem in this section is Theorem~\ref{th:CD} which asserts that,
on a weighted Riemannian manifold,
the curvature-dimension condition is equivalent to
a lower bound of the corresponding weighted Ricci curvature.
In order to avoid lengthy calculations, we begin with Euclidean spaces
with or without weight, and see the relation between
the Brunn-Minkowski inequality and the weighted Ricci curvature.
Then the general Riemannian situation is only briefly explained.
We hope that our simplified argument will help the readers to catch
the idea of the curvature-dimension condition.

\subsection{Brunn-Minkowski inequalities in Euclidean spaces}\label{ssc:BM}%%%

For later convenience, we explain fundamental facts of optimal transport theory
on Euclidean spaces.
Given $\mu_0,\mu_1 \in \cP_c(\R^n)$ with $\mu_0 \in \cP^{\ac}(\R^n,\vol_n)$,
there is a convex function $f:\R^n \lra \R$ such that the map
\[ \cF_t(x):=(1-t)x +t\nabla f(x), \quad t \in [0,1], \]
gives the unique optimal transport from $\mu_0$ to $\mu_1$
(Brenier's theorem, \cite{Br}).
Precisely, $t \longmapsto \mu_t:=(\cF_t)_{\sharp}\mu_0$ is the unique
minimal geodesic from $\mu_0$ to $\mu_1$ with respect to the
$L^2$-Wasserstein distance.
We remark that the convex function $f$ is twice differentiable a.e.\
(Alexandrov's theorem, cf.\ \cite[Chapter~14]{Vi2}).
Thus $\nabla f$ makes sense and $\cF_t$ is differentiable a.e.
Moreover, the {\it Monge-Amp\`ere equation}
\begin{equation}\label{eq:MA}
\rho_1\big( \cF_1(x) \big) \det\big( D\cF_1(x) \big) =\rho_0(x)
\end{equation}
holds for $\mu_0$-a.e.\ $x$.

Now, we are ready for proving the classical Brunn-Minkowski inequality
in the (unweighted) Euclidean space $(\R^n,\|\cdot\|,\vol_n)$.
Briefly speaking, it asserts that $\vol_n^{1/n}$ is concave.
We shall give a proof based on optimal transport theory.
Given two (nonempty) sets $A,B \subset \R^n$ and $t \in [0,1]$,
we set
\[ (1-t)A+tB:=\{ (1-t)x+ty \,|\, x \in A,\ y \in B \}. \]
(See Figure~5, where $(1/2)A+(1/2)B$ has much more measure than $A$ and $B$.)

%%%%%%%%%%%%%%%%%%%%
\begin{center}
\begin{picture}(400,180)
\put(180,10){Figure~5}

\thicklines
\put(60,100){\shade\ellipse{20}{120}}
\put(320,100){\shade\ellipse{120}{20}}

\qbezier(165,67)(190,61)(215,67)
\qbezier(165,133)(190,139)(215,133)
\qbezier(223,125)(229,100)(223,75)
\qbezier(157,125)(151,100)(157,75)
\qbezier(165,67)(159,69)(157,75)
\qbezier(165,133)(159,131)(157,125)
\qbezier(215,67)(221,69)(223,75)
\qbezier(215,133)(221,131)(223,125)

\put(60,100){\line(1,0){260}}
\qbezier(60,40)(215,67)(370,94)
\qbezier(60,160)(215,133)(370,106)

\put(59,99){\rule{2pt}{2pt}}
\put(189,99){\rule{2pt}{2pt}}
\put(319,99){\rule{2pt}{2pt}}

\put(59,39){\rule{2pt}{2pt}}
\put(214,66){\rule{2pt}{2pt}}
\put(369,93){\rule{2pt}{2pt}}

\put(59,159){\rule{2pt}{2pt}}
\put(214,132){\rule{2pt}{2pt}}
\put(369,105){\rule{2pt}{2pt}}

\put(30,95){$A$}
\put(150,40){$(1/2)A+(1/2)B$}
\put(315,70){$B$}

\end{picture}
\end{center}
%%%%%%%%%%%%%%%%%%%%

\begin{theorem}[Brunn-Minkowski inequality]\label{th:BMn}
For any measurable sets $A,B \subset \R^n$ and $t \in [0,1]$, we have
\begin{equation}\label{eq:BM}
\vol_n\big( (1-t)A+tB \big)^{1/n} \ge (1-t)\vol_n(A)^{1/n} +t\vol_n(B)^{1/n}.
\end{equation}
\end{theorem}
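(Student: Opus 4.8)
The plan is to apply Brenier's theorem to the uniform distributions on $A$ and $B$ and then combine the Monge--Amp\`ere equation $(\ref{eq:MA})$ with Minkowski's determinant inequality $\det(S+T)^{1/n}\ge\det(S)^{1/n}+\det(T)^{1/n}$ for symmetric positive semidefinite matrices. First I would dispose of the degenerate cases. The inequality $(\ref{eq:BM})$ is trivial for $t\in\{0,1\}$, and for nonempty $A,B$ the set $(1-t)A+tB$ contains, for any $x_0\in A$ and $y_0\in B$, both the translate $(1-t)x_0+tB$ of $tB$ and the translate $(1-t)A+ty_0$ of $(1-t)A$, so $\vol_n((1-t)A+tB)\ge\max\{t^n\vol_n(B),(1-t)^n\vol_n(A)\}$; this already proves $(\ref{eq:BM})$ whenever $\vol_n(A)$ or $\vol_n(B)$ lies in $\{0,\infty\}$. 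Hence we may assume $t\in(0,1)$ and $0<\vol_n(A),\vol_n(B)<\infty$, and then, exhausting $A$ and $B$ from inside by bounded (say compact) subsets and letting these increase, we may further assume $A$ and $B$ bounded. Throughout, $\vol_n$ is read as an outer measure, as in Remark~\ref{rm:mble}, so that the possibly non-Borel set $(1-t)A+tB$ poses no measurability issue.

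Now set $\mu_0$ to be the restriction of $\vol_n$ to $A$, normalized to total mass $1$ (so its density is the constant $\rho_0\equiv\vol_n(A)^{-1}$ on $A$), and define $\mu_1$ analogously from $B$; then $\mu_0,\mu_1\in\cP_c(\R^n)$ and $\mu_0\in\cP^{\ac}(\R^n,\vol_n)$. Let $f$ and $\cF_t(x)=(1-t)x+t\nabla f(x)$ be as in Brenier's theorem, and put $\mu_t:=(\cF_t)_{\sharp}\mu_0$, with absolutely continuous density $\rho_t$. Since $\cF_1=\nabla f$ pushes $\mu_0$ forward to $\mu_1$, we have $\nabla f(x)\in B$ for $\mu_0$-a.e.\ $x$, so $\cF_t(x)=(1-t)x+t\nabla f(x)\in(1-t)A+tB$ for a.e.\ $x\in A$; hence $\rho_t$ is supported, up to a $\vol_n$-null set, inside $(1-t)A+tB$.

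The core step is a pointwise lower bound on $\det D\cF_t$. At a.e.\ point $f$ is twice differentiable (Alexandrov), and $\Hess f(x)$ is symmetric positive semidefinite by convexity, hence so is $D\cF_t(x)=(1-t)\Id+t\Hess f(x)$; Minkowski's determinant inequality then gives
\[
\det\big(D\cF_t(x)\big)^{1/n}\ \ge\ (1-t)+t\,\det\big(\Hess f(x)\big)^{1/n}.
\]
Applying $(\ref{eq:MA})$ to the pair $(\mu_0,\mu_1)$ (with $D\cF_1=\Hess f$) yields $\vol_n(B)^{-1}\det(\Hess f(x))=\vol_n(A)^{-1}$ for $\mu_0$-a.e.\ $x$, so $\det(\Hess f(x))^{1/n}=(\vol_n(B)/\vol_n(A))^{1/n}$ and therefore $\det D\cF_t(x)\ge c:=\big((1-t)+t(\vol_n(B)/\vol_n(A))^{1/n}\big)^n$ for a.e.\ $x\in A$. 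On the other hand $\cF_t$ is the gradient of the convex function $x\mapsto\frac{1-t}{2}\|x\|^2+tf(x)$, hence is the Brenier map realizing the optimal transport from $\mu_0$ to $\mu_t$, so $(\ref{eq:MA})$ applies again and gives $\rho_t(\cF_t(x))\det(D\cF_t(x))=\rho_0(x)=\vol_n(A)^{-1}$; combined with $\det D\cF_t(x)\ge c$ this shows $\rho_t\le(c\,\vol_n(A))^{-1}$ $\vol_n$-a.e.\ on its support.

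It remains to integrate. Since $\rho_t$ is a probability density supported in $(1-t)A+tB$,
\[
1\ =\ \int_{(1-t)A+tB}\rho_t\,d\vol_n\ \le\ \frac{\vol_n\big((1-t)A+tB\big)}{c\,\vol_n(A)},
\]
whence $\vol_n((1-t)A+tB)\ge c\,\vol_n(A)$, which is exactly $(\ref{eq:BM})$ after taking $n$-th roots and recalling the definition of $c$. The only genuinely delicate ingredient is that the Monge--Amp\`ere identity $(\ref{eq:MA})$ holds for the merely a.e.-differentiable maps $\nabla f$ and $\cF_t$; but this is precisely what is recalled just before the theorem (together with Alexandrov's theorem), so I would invoke it rather than reprove it. Everything else is elementary --- one matrix inequality and one integration.
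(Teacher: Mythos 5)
Your proof is correct and follows essentially the same route as the paper: Brenier's theorem for the transport between uniform distributions, the Monge--Amp\`ere equation, and the pointwise concavity estimate $\det(D\cF_t)^{1/n}\ge(1-t)+t\det(D\cF_1)^{1/n}$ (you quote it as Minkowski's determinant inequality; the paper derives it from AM--GM on the eigenvalues of $D\cF_1=\Hess f$, which is the same fact). The only substantive difference is the final integration step. The paper integrates the pointwise bound against $\mu_0$, applies Jensen/H\"older in the form $\int\det(D\cF_t)^{1/n}\,d\mu_0\le(\int\det(D\cF_t)\,d\mu_0)^{1/n}$, and then uses the change-of-variables formula to identify the right side with $(\vol_n(\cF_t(A))/\vol_n(A))^{1/n}$. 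You instead observe that $\cF_t$ is itself a Brenier map (gradient of the convex function $\tfrac{1-t}{2}\|x\|^2+tf(x)$), invoke the Monge--Amp\`ere equation once more for the pair $(\mu_0,\mu_t)$ to get the uniform $L^\infty$ bound $\rho_t\le(c\,\vol_n(A))^{-1}$, and integrate $\rho_t$ over its support. Since the pointwise lower bound on $\det(D\cF_t)^{1/n}$ is a constant, the paper's H\"older step is actually vestigial, and your version makes that transparent; on the other hand it costs you a second appeal to Brenier/Monge--Amp\`ere (for $\cF_t$ rather than $\cF_1$), which the paper sidesteps by using only the change-of-variables identity. Both are legitimate; the content is the same.
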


\begin{proof}
We can assume that both $A$ and $B$ are bounded and of positive measure.
The case of $\vol_n(A)=0$ is easily checked by choosing
a point $x \in A$, as we have
\[ \vol_n\big( (1-t)A+tB \big)^{1/n} \ge \vol_n\big( (1-t)\{x\}+tB \big)^{1/n}
 =t\vol_n(B)^{1/n}. \]
If either $A$ or $B$ is unbounded, then applying $(\ref{eq:BM})$ to bounded sets yields
\begin{align*}
&\vol_n\big( (1-t)\{ A \cap B(0,R)\}+t\{ B \cap B(0,R)\} \big)^{1/n} \\
&\ge (1-t)\vol_n\big( A \cap B(0,R) \big)^{1/n}
 +t\vol_n\big( B \cap B(0,R) \big)^{1/n}.
\end{align*}
We take the limit as $R$ go to infinity and obtain
\[  \vol_n\big( (1-t)A+tB \big)^{1/n} \ge
 (1-t)\vol_n(A)^{1/n} +t\vol_n(B)^{1/n}. \]

Consider the uniform distributions on $A$ and $B$,
\[ \mu_0=\rho_0 \vol_n:=\frac{\chi_A}{\vol_n(A)}\vol_n, \qquad
 \mu_1=\rho_1 \vol_n:=\frac{\chi_B}{\vol_n(B)}\vol_n, \]
where $\chi_A$ stands for the characteristic function of $A$.
As $\mu_0$ is absolutely continuous, there is a convex function
$f:\R^n \lra \R$ such that the map $\cF_1=(1-t)\Id_{\R^n}+t\nabla f$,
$t \in [0,1]$, is the unique optimal transport from $\mu_0$ to $\mu_1$.
Between the uniform distributions $\mu_0$ and $\mu_1$,
the Monge-Amp\`ere equation $(\ref{eq:MA})$ simply means that
\[ \det(D\cF_1) =\frac{\vol_n(B)}{\vol_n(A)} \]
$\mu_0$-a.e.

Note that $D\cF_1=\Hess f$ is symmetric and positive definite $\mu_0$-a.e.,
since $f$ is convex and $\det(D\cF_1)>0$.
We shall estimate $\det(D\cF_t)=\det((1-t)I_n+tD\cF_1)$ from above and below.
To do so, we denote the eigenvalues of $D\cF_1$ by
$\lambda_1,\ldots,\lambda_n>0$ and apply the inequality of
arithmetic and geometric means to see
\begin{align*}
&\bigg\{ \frac{(1-t)^n}{\det((1-t)I_n+tD\cF_1)} \bigg\}^{1/n}
 +\bigg\{ \frac{t^n \det(D\cF_1)}{\det((1-t)I_n+tD\cF_1)} \bigg\}^{1/n} \\
&= \bigg\{ \prod_{i=1}^n \frac{1-t}{(1-t)+t\lambda_i} \bigg\}^{1/n}
 +\bigg\{ \prod_{i=1}^n \frac{t\lambda_i}{(1-t)+t\lambda_i} \bigg\}^{1/n} \\
&\le \frac{1}{n} \sum_{i=1}^n \bigg\{ \frac{1-t}{(1-t)+t\lambda_i}
 +\frac{t\lambda_i}{(1-t)+t\lambda_i} \bigg\} =1.
\end{align*}
Thus we have, on the one hand,
\begin{equation}\label{eq:a-g}
\det(D\cF_t)^{1/n} \ge (1-t)+t\det(D\cF_1)^{1/n}
 =(1-t)+t\bigg\{ \frac{\vol_n(B)}{\vol_n(A)} \bigg\}^{1/n}.
\end{equation}
On the other hand, the H\"older inequality and the change of variables formula yield
\[ \int_A \det(D\cF_t)^{1/n} \,d\mu_0
 \le \bigg( \int_A \det(D\cF_t) \,d\mu_0 \bigg)^{1/n}
 =\bigg( \frac{1}{\vol_n(A)} \int_{\cF_t(A)} d\vol_n \bigg)^{1/n}. \]
Therefore we obtain
\[ \int_A \det(D\cF_t)^{1/n} \,d\mu_0
 \le \bigg\{ \frac{\vol_n(\cF_t(A))}{\vol_n(A)} \bigg\}^{1/n}
 \le \bigg\{ \frac{\vol_n((1-t)A+tB)}{\vol_n(A)} \bigg\}^{1/n}. \]
Combining these, we complete the proof of $(\ref{eq:BM})$.
$\qedd$
\end{proof}

\begin{remark}\label{rm:mble}
We remark that the set $(1-t)A+tB$ is not necessarily measurable
(regardless the measurability of $A$ and $B$).
Hence, to be precise, $\vol_n((1-t)A+tB)$ is considered as an outer measure
given by $\inf_W \vol_n(W)$, where $W \subset \R^n$ runs over all
measurable sets containing $(1-t)A+tB$.
The same remark is applied to Theorems~\ref{th:BMN}, \ref{th:gBM} below.
\end{remark}

Next we treat the weighted case $(\R^n,\|\cdot\|,m)$,
where $m=e^{-\psi}\vol_n$ with $\psi \in C^{\infty}(\R^n)$.
Then we need to replace $1/n$ in $(\ref{eq:BM})$ with $1/N$ for some
$N \in (n,\infty)$, and the analogue of $(\ref{eq:BM})$
leads us to an important condition on $\psi$.

\begin{theorem}[Brunn-Minkowski inequality with weight]\label{th:BMN}
Take $N \in (n,\infty)$.
A weighted Euclidean space $(\R^n,\|\cdot\|,m)$ with $m=e^{-\psi}\vol_n$,
$\psi \in C^{\infty}(\R^n)$, satisfies
\begin{equation}\label{eq:BMN}
m\big( (1-t)A+tB \big)^{1/N} \ge (1-t)m(A)^{1/N} +tm(B)^{1/N}
\end{equation}
for all measurable sets $A,B \subset \R^n$ and all $t \in [0,1]$
if and only if
\begin{equation}\label{eq:NR}
\Hess\psi(v,v)-\frac{\langle \nabla\psi(x),v \rangle^2}{N-n} \ge 0
\end{equation}
holds for all unit vectors $v \in T_x \R^n$.
\end{theorem}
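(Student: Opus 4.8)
The plan is to prove the two implications separately: the sufficiency of $(\ref{eq:NR})$ will be a weighted refinement of the proof of Theorem~\ref{th:BMn}, and the necessity will be extracted by testing $(\ref{eq:BMN})$ against a carefully chosen two-parameter family of dilated balls.

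\emph{Sufficiency} ($(\ref{eq:NR}) \Rightarrow (\ref{eq:BMN})$). First I would reduce, exactly as in the proof of Theorem~\ref{th:BMn}, to the case of $A,B$ bounded with $0<m(A),m(B)<\infty$ (the unbounded and null cases being identical to that proof), and consider the uniform distributions $\mu_0=(\chi_A/m(A))\,m$ and $\mu_1=(\chi_B/m(B))\,m$. Let $f$ be Brenier's convex potential and $\cF_t=(1-t)\Id_{\R^n}+t\nabla f$ the optimal transport, so that $\mu_t:=(\cF_t)_{\sharp}\mu_0$ is the $L^2$-Wasserstein geodesic. Writing $J(t,x):=\det(D\cF_t(x))$, which exists for $\mu_0$-a.e.\ $x$ by Alexandrov's theorem, set
\[ \Phi(t,x):=\bigl( e^{-\psi(\cF_t(x))}\,J(t,x) \bigr)^{1/N}. \]
The crux is to show $t\mapsto\Phi(t,x)$ is concave on $[0,1]$ for $\mu_0$-a.e.\ $x$. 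Indeed, since $D\cF_t(x)=(1-t)I_n+t\,\Hess f(x)$ with $\Hess f(x)$ symmetric positive definite, the same arithmetic-geometric mean argument used in Theorem~\ref{th:BMn} gives $J(t,x)^{1/n}\ge(1-t)+tJ(1,x)^{1/n}$ (recall $J(0,x)=1$); on the other hand $t\mapsto\cF_t(x)$ is an affine curve with constant velocity $w:=\nabla f(x)-x$, so for $\ell(t):=-\psi(\cF_t(x))$ we have $\ell''(t)=-\Hess\psi(w,w)$ at $\cF_t(x)$, which is $\le-\ell'(t)^2/(N-n)$ by $(\ref{eq:NR})$, and hence $t\mapsto e^{\ell(t)/(N-n)}$ is concave. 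Since $\Phi=\bigl(e^{\ell/(N-n)}\bigr)^{(N-n)/N}\cdot\bigl(J^{1/n}\bigr)^{n/N}$ and the exponents $(N-n)/N$ and $n/N$ sum to $1$, the concavity of $(a,b)\mapsto a^{(N-n)/N}b^{n/N}$ on the positive quadrant yields $\Phi(t,x)\ge(1-t)\Phi(0,x)+t\Phi(1,x)$. Now $\Phi(0,x)=e^{-\psi(x)/N}$, while the Monge-Amp\`ere equation $(\ref{eq:MA})$ for the uniform densities gives $e^{-\psi(\cF_1(x))}J(1,x)=(m(B)/m(A))\,e^{-\psi(x)}$, so $\Phi(1,x)=(m(B)/m(A))^{1/N}e^{-\psi(x)/N}$. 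Finally the change-of-variables formula for $\mu_t=(\cF_t)_{\sharp}\mu_0$ gives $m(\cF_t(A))=\int_A\Phi(t,x)^N\,d\vol_n(x)$, and since $\cF_t$ sends a.e.\ point of $A$ into $(1-t)A+tB$, the lower bound for $\Phi$ yields $m((1-t)A+tB)\ge\bigl[(1-t)+t(m(B)/m(A))^{1/N}\bigr]^N m(A)$, which is $(\ref{eq:BMN})$ after extracting $N$-th roots.

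\emph{Necessity} ($(\ref{eq:BMN}) \Rightarrow (\ref{eq:NR})$). For $\delta>0$ and $a\in\R^n$ I would put $P(\delta,a):=m(\delta B(0,1)+a)$ and $Q:=P^{1/N}$. Applying $(\ref{eq:BMN})$ with $t=1/2$ to $A=\delta_0 B(0,1)+a_0$ and $B=\delta_1 B(0,1)+a_1$, and using $\tfrac12 A+\tfrac12 B=\tfrac{\delta_0+\delta_1}{2}B(0,1)+\tfrac{a_0+a_1}{2}$, one finds that $Q$ is midpoint-concave, hence (being continuous) concave, on the convex set $(0,\infty)\times\R^n$; therefore its Hessian in the $n+1$ variables $(\delta,a)$ is negative semidefinite everywhere, which (since $P>0$) is equivalent to $\Hess P-\tfrac{N-1}{N}P^{-1}\,\nabla P\otimes\nabla P\preceq 0$. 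Fixing $x_0\in\R^n$ and a unit $v\in T_{x_0}\R^n$, translating $x_0$ to the origin, and evaluating this matrix inequality at the point $(\delta,0)$ against the test vector whose $a$-component is $v$ and whose $\delta$-component is $-\delta\langle\nabla\psi(x_0),v\rangle/(N-n)$, one expands $P(\delta,0)=\delta^n e^{-\psi(0)}\vol_n(B(0,1))\,(1+O(\delta^2))$ together with the relevant first and second derivatives; the resulting scalar inequality has leading term a positive multiple of $\langle\nabla\psi(x_0),v\rangle^2/(N-n)-\Hess\psi(x_0)(v,v)$, so letting $\delta\downarrow 0$ gives precisely $(\ref{eq:NR})$.

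I expect the necessity direction to be the main obstacle, and within it the choice of test configuration: plain balls, or products of a varying interval with a fixed cube, only reproduce $(\ref{eq:NR})$ with $N$ or $N-1$ in place of $N-n$. The essential idea is that one must let the dilation factor $\delta$ vary jointly with the $n$ translational parameters, so that the $\delta^n$ scaling of the unweighted volume contributes the ``$-n$'', and then optimize over the relative size of the $\delta$-perturbation (this is the role of the coefficient $-1/(N-n)$ in the test vector above); keeping track of the $O(\delta)$ error terms in that limit is the only computation that requires genuine care.
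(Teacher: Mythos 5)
Your proof is correct and, at its core, follows the same route as the paper's: the sufficiency argument (AM--GM for the unweighted Jacobian, concavity of $t\mapsto e^{-\psi(\cF_t(x))/(N-n)}$ from $(\ref{eq:NR})$, combined by the weighted geometric mean/H\"older) is essentially identical to the paper's treatment of $\det_m(D\cF_t)^{1/N}$, and your necessity computation is the paper's ball test $A_\pm=B(\gamma(\pm\delta),\ve(1\mp a\delta))$ in disguise, with the concavity of $Q(\delta,a)=m(\delta B(0,1)+a)^{1/N}$ packaging the midpoint Brunn--Minkowski step and your test vector $\bigl(-\delta\langle\nabla\psi,v\rangle/(N-n),\,v\bigr)$ corresponding exactly to the paper's optimally coupled perturbation with $a=\langle\nabla\psi(x),v\rangle/(N-n)$. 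The one genuine (if minor) stylistic gain is that promoting midpoint concavity at specific configurations to full concavity of $Q$ and then differentiating collapses the paper's two successive limits $\ve\downarrow 0$, $\delta\downarrow 0$ into a single limit.
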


\begin{proof}
We first prove that $(\ref{eq:NR})$ implies $(\ref{eq:BMN})$.
Similarly to Theorem~\ref{th:BMn}, we assume that $A$ and $B$
are bounded and of positive measure, and set
\[ \mu_0:=\frac{\chi_A}{m(A)}m, \qquad \mu_1:=\frac{\chi_B}{m(B)}m. \]
We again find a convex function $f:\R^n \lra \R$ such that
$\mu_t:=(\cF_t)_{\sharp}\mu_0$ is the minimal geodesic from
$\mu_0$ to $\mu_1$, where $\cF_t:=(1-t)\Id_{\R^n}+t\nabla f$.
Instead of $\det(D\cF_t)$, we consider
\[ \det_m\big( D\cF_t(x) \big) :=e^{\psi(x)-\psi(\cF_t(x))} \det\big( D\cF_t(x) \big). \]
The coefficient $e^{\psi(x)-\psi(\cF_t(x))}$ represents the ratio
of the weights at $x$ and $\cF_t(x)$.
As in Theorem~\ref{th:BMn} (see, especially, $(\ref{eq:a-g})$),
it is sufficient to show the concavity of
$\det_m(D\cF_t(x))^{1/N}$ to derive the desired inequality $(\ref{eq:BMN})$.
Fix $x \in A$ and put
\[ \gamma(t):=\cF_t(x), \quad \Phi_m(t):=\det_m\big( D\cF_t(x) \big)^{1/N},
 \quad \Phi(t):=\det\big( D\cF_t(x) \big)^{1/n}. \]
On the one hand, it is proved in $(\ref{eq:a-g})$ that
$\Phi(t) \ge (1-t)\Phi(0)+t\Phi(1)$.
On the other hand, the assumption $(\ref{eq:NR})$ implies that
$e^{-\psi(\cF_t(x))/(N-n)}$ is a concave function in $t$.
These together imply $(\ref{eq:BMN})$ via the H\"older inequality.
To be precise, we have
\begin{align*}
&\Phi_m(t) =e^{\{ \psi(x)-\psi(\cF_t(x)) \}/N} \Phi(t)^{n/N} \\
&\ge e^{\psi(x)/N}
 \big\{ (1-t)e^{-\psi(x)/(N-n)} +te^{-\psi(\cF_1(x))/(N-n)} \big\}^{(N-n)/N}
 \big\{ (1-t)\Phi(0)+t\Phi(1) \big\}^{n/N},
\end{align*}
and then the H\"older inequality yields
\begin{align*}
\Phi_m(t) &\ge e^{\psi(x)/N}
 \big\{ (1-t)e^{-\psi(x)/N} \Phi(0)^{n/N} +te^{-\psi(\cF_1(x))/N} \Phi(1)^{n/N} \big\} \\
&= (1-t)\Phi_m(0) +t\Phi_m(1).
\end{align*}

To see the converse, we fix an arbitrary unit vector $v \in T_x\R^n$
and set $\gamma(t):=x+tv$ for $t \in \R$ and
$a:=\langle \nabla\psi(x),v \rangle/(N-n)$.
Given $\ve>0$ and $\delta \in \R$ with $\ve, |\delta| \ll 1$,
we consider two open balls (see Figure~6 where $a\delta>0$)
\[ A_+:=B\big( \gamma(\delta),\ve(1-a\delta) \big), \qquad
 A_-:=B\big( \gamma(-\delta),\ve(1+a\delta) \big). \]

%%%%%%%%%%%%%%%%%%%%%
\begin{center}
\begin{picture}(400,180)
\put(180,10){Figure~6}

\thicklines
\put(80,100){\shade\circle{100}}
\put(200,100){\circle{60}}
\put(320,100){\shade\circle{20}}

\qbezier(80,150)(200,130)(320,110)
\put(80,100){\line(1,0){240}}
\qbezier(80,50)(200,70)(320,90)

\put(79,99){\rule{2pt}{2pt}}
\put(199,99){\rule{2pt}{2pt}}
\put(319,99){\rule{2pt}{2pt}}

\put(15,30){$A_-=B(\gamma(-\delta),\ve(1+a\delta))$}
\put(180,50){$B(x,\ve)$}
\put(250,60){$A_+=B(\gamma(\delta),\ve(1-a\delta))$}

\end{picture}
\end{center}
%%%%%%%%%%%%%%%%%%%%%

Note that $A_+=A_-=B(x,\ve)$ for $\delta=0$ and that
$(1/2)A_- +(1/2)A_+ =B(x,\ve)$.
We also observe that
\[ m(A_{\pm})
 =e^{-\psi(\gamma(\pm\delta))}c_n \ve^n (1\mp a\delta)^n+O(\ve^{n+1}), \]
where $c_n=\vol_n(B(0,1))$ and $O(\ve^{n+1})$ is independent of $\delta$.
Applying $(\ref{eq:BMN})$ to $A_{\pm}$ with $t=1/2$, we obtain
\begin{equation}\label{eq:A+-}
m\big( B(x,\ve) \big) \ge \frac{1}{2^N} \{ m(A_-)^{1/N}+m(A_+)^{1/N} \}^N.
\end{equation}
We know that $m(B(x,\ve))=e^{-\psi(x)}c_n\ve^n+O(\ve^{n+1})$.
In order to estimate the right-hand side, we calculate
\begin{align*}
&\frac{\del^2}{\del\delta^2}
 \Big[ e^{-\psi(\gamma(\delta))/N}(1-a\delta)^{n/N} \Big]\Big|_{\delta=0} \\
&=\bigg\{ -\frac{\Hess\psi(v,v)}{N}+\frac{\langle \nabla\psi,v \rangle^2}{N^2}
 +2\frac{\langle \nabla\psi,v \rangle}{N} \frac{n}{N}a
 +\frac{n}{N}\bigg( \frac{n}{N}-1 \bigg) a^2 \bigg\} e^{-\psi(x)/N} \\
&=\bigg\{ -\Hess\psi(v,v) +\frac{\langle \nabla\psi,v \rangle^2}{N-n}
 -\frac{n}{N(N-n)}\big( (N-n)a-\langle \nabla\psi,v \rangle \big)^2 \bigg\}
 \frac{e^{-\psi(x)/N}}{N}.
\end{align*}
Due to the choice of $a=\langle \nabla\psi(x),v \rangle/(N-n)$
(as the maximizer), we have
\[ \frac{\del^2}{\del\delta^2}
 \Big[ e^{-\psi(\gamma(\delta))/N}(1-a\delta)^{n/N} \Big]\Big|_{\delta=0}
 =\bigg\{ \frac{\langle \nabla\psi(x),v \rangle^2}{N-n} -\Hess\psi(v,v) \bigg\}
 \frac{e^{-\psi(x)/N}}{N}. \]
Thus we find, by the Taylor expansion of
$e^{-\psi(\gamma(\delta))/N} (1-a\delta)^{n/N}$ at $\delta=0$,
\begin{align*}
&\frac{m(A_-)^{1/N}+m(A_+)^{1/N}}{(c_n\ve^n)^{1/N}} \\
&= 2e^{-\psi(x)/N}
 -\bigg\{ \Hess\psi(v,v)-\frac{\langle \nabla\psi,v \rangle^2}{N-n} \bigg\}
 \frac{e^{-\psi(x)/N}}{N} \delta^2 +O(\delta^4) +O(\ve).
\end{align*}
Hence we obtain by letting $\ve$ go to zero in $(\ref{eq:A+-})$ that
\begin{align*}
e^{-\psi(x)} &\ge \frac{1}{2^N} \bigg[ 2e^{-\psi(x)/N}
 -\bigg\{ \Hess\psi(v,v)-\frac{\langle \nabla\psi,v \rangle^2}{N-n} \bigg\}
 \frac{e^{-\psi(x)/N}}{N} \delta^2 +O(\delta^4) \bigg]^N \\
&=e^{-\psi(x)} \bigg[ 1-\frac{1}{2}
 \bigg\{ \Hess\psi(v,v)-\frac{\langle \nabla\psi,v \rangle^2}{N-n} \bigg\} \delta^2 \bigg]
+O(\delta^4).
\end{align*}
Therefore we conclude
\[ \Hess\psi(v,v)-\frac{\langle \nabla\psi(x),v \rangle^2}{N-n} \ge 0. \]
$\qedd$
\end{proof}

Applying $(\ref{eq:BMN})$ to $A=\{x\}$, $B=B(x,R)$ and $t=r/R$ implies
\begin{equation}\label{eq:NBG}
\frac{m(B(x,R))}{m(B(x,r))} \le \bigg( \frac{R}{r} \bigg)^N
\end{equation}
for all $x \in \R^n$ and $0<r<R$.
Thus, compared with Theorem~\ref{th:BG}, $(\R^n,\|\cdot\|,m)$ satisfying
$(\ref{eq:NR})$ behaves like an `$N$-dimensional' space of nonnegative Ricci curvature
(see Theorem~\ref{th:CDBG} for more general theorem in terms of
the curvature-dimension condition).

\subsection{Characterizing lower Ricci curvature bounds}%%%%%%%%%%

Now we switch to the weighted Riemannian situation $(M,g,m)$,
where $m=e^{-\psi}\vol_g$ with $\psi \in C^{\infty}(M)$.
Ricci curvature controls $\vol_n$ as we saw in Section~\ref{sc:intro},
and Theorem \ref{th:BMN} suggests that the quantity
\[ \Hess\psi(v,v)-\frac{\langle \nabla\psi,v \rangle^2}{N-n} \]
has an essential information in controlling the effect of the weight.
Their combination indeed gives the weighted Ricci curvature as follows
(cf.\ \cite{BE}, \cite{Qi}, \cite{Lo}).

\begin{definition}[Weighted Ricci curvature]\label{df:wRic}
Given a unit tangent vector $v \in T_xM$ and $N \in [n,\infty]$,
the {\it weighted Ricci curvature} $\Ric_N(v)$ is defined by
\begin{enumerate}[(1)]
\item $\Ric_n(v):=\displaystyle \left\{
 \begin{array}{ll} \Ric(v)+\Hess\psi(v,v) & {\rm if}\ \langle \nabla\psi(x),v \rangle=0, \\
 -\infty & {\rm otherwise}; \end{array} \right.$

\item $\Ric_N(v):=\Ric(v) +\Hess\psi(v,v)
 -\displaystyle\frac{\langle \nabla\psi(x),v\rangle^2}{N-n}$ for $N \in (n,\infty)$;

\item $\Ric_{\infty}(v):=\Ric(v) +\Hess\psi(v,v)$.
\end{enumerate}
We say that $\Ric_N \ge K$ holds for $K \in \R$ if $\Ric_N(v) \ge K$
holds for all unit vectors $v \in TM$.
\end{definition}

Note that $\Ric_N \le \Ric_{N'}$ holds for $n \le N \le N'<\infty$.
$\Ric_{\infty}$ is also called the {\it Bakry-\'Emery tensor}.
If the weight is trivial in the sense that $\psi$ is constant,
then $\Ric_N$ coincides with $\Ric$ for all $N \in [n,\infty]$.
One of the most important examples possessing nontrivial weight
is the following.

\begin{example}[Euclidean spaces with log-concave measures]\label{ex:wRic}
Consider a weighted Euclidean space $(\R^n,\|\cdot\|,m)$ with
$m=e^{-\psi}\vol_n$, $\psi \in C^{\infty}(\R^n)$.
Then clearly $\Ric_{\infty}(v)=\Hess\psi(v,v)$, thus $\Ric_{\infty} \ge 0$
if $\psi$ is convex.
The most typical and important example satisfying $\Ric_{\infty} \ge K>0$ is
the Gausssian measure
\[ m=\bigg( \frac{K}{2\pi} \bigg)^{n/2} e^{-K\|x\|^2/2} \vol_n, \qquad
 \psi(x)=\frac{K}{2}\|x\|^2 +\frac{n}{2}\log\bigg( \frac{2\pi}{K} \bigg). \]
Note that $\Hess\psi \ge K$ holds independently of the dimension $n$.
\end{example}

Before stating the main theorem of the section,
we mention that optimal transport in a Riemannian manifold is described
in the same manner as the Euclidean spaces (due to \cite{Mc2}, \cite{CMS1}).
Given $\mu_0,\mu_1 \in \cP_c(M)$ with
$\mu_0=\rho_0 \vol_n \in \cP^{\ac}(M,\vol_n)$, there is a
{\it $(d^2/2)$-convex function} $f:M \lra \R$ such that
$\mu_t:=(\cF_t)_{\sharp}\mu_0$ with $\cF_t(x):=\exp_x[t\nabla f(x)]$,
$t \in [0,1]$, gives the unique minimal geodesic from $\mu_0$ to $\mu_1$.
We do not give the definition of $(d^2/2)$-convex functions,
but only remark that they are twice differentiable a.e.
Furthermore, the absolute continuity of $\mu_0$ implies that
$\mu_t$ is absolutely continuous for all $t \in [0,1)$,
so that we can set $\mu_t=\rho_tm$.
Since $\cF_t$ is differentiable $\mu_0$-a.e., we can consider the Jacobian
$\| (D\cF_t)_x \|$ (with respect to $\vol_n$)
which satisfies the Monge-Amp\`ere equation
\begin{equation}\label{eq:RMA}
\rho_0(x)=\rho_t\big( \cF_t(x) \big) \| (D\cF_t)_x \|
\end{equation}
for $\mu_0$-a.e.\ $x$.

We next introduce two entropy functionals.
Given $N \in [n,\infty)$ and an absolutely continuous probability measure
$\mu=\rho m \in \cP^{\ac}(M,m)$, we define the {\it R\'enyi entropy} as
\begin{equation}\label{eq:Ren}
S_N(\mu):=-\int_M \rho^{1-1/N} \,dm.
\end{equation}
We also define the {\it relative entropy} with respect to the reference measure $m$ by
\begin{equation}\label{eq:Ent}
\Ent_m(\mu):=\int_M \rho \log \rho \,dm.
\end{equation}
Note that $\Ent_m$ has the opposite sign to the Boltzmann entropy.
The domain of these functionals will be extended in the next section
($(\ref{eq:Ren'})$, $(\ref{eq:Ent'})$) to probability measures possibly with
nontrivial singular part.
In this section, however, we consider only absolutely continuous measures
for the sake of simplicity.
As any two points in $\cP^{\ac}_c(M,m)$ are connected
by a unique minimal geodesic contained in $\cP^{\ac}_c(M,m)$,
the convexity of $S_N$ and $\Ent_m$ in $\cP_c^{\ac}(M,m)$ makes sense.

Recall $(\ref{eq:beta})$ for the definition of the function $\beta^t_{K,N}$.
The following theorem is due to von Renesse, Sturm and many others,
see Further Reading for more details.

\begin{theorem}[A characterization of Ricci curvature bound]
\label{th:CD}
For a weighted Riemannian manifold $(M,g,m)$ with $m=e^{-\psi}\vol_g$,
$\psi \in C^{\infty}(M)$, we have $\Ric_N \ge K$ for some $K \in \R$
and $N \in [n,\infty)$ if and only if any pair of measures
$\mu_0=\rho_0 m, \mu_1=\rho_1 m \in \cP_c^{\ac}(M,m)$ satisfies
\begin{align}
S_N(\mu_t) &\le -(1-t)\int_{M \times M}
 \beta^{1-t}_{K,N}\big( d(x,y) \big)^{1/N} \rho_0(x)^{-1/N} \,d\pi(x,y) \nonumber\\
&\quad -t\int_{M \times M}
 \beta^t_{K,N}\big( d(x,y) \big)^{1/N} \rho_1(y)^{-1/N} \,d\pi(x,y) \label{eq:CDN}
\end{align}
for all $t \in (0,1)$,
where $(\mu_t)_{t \in [0,1]} \subset \cP_c^{\ac}(M,m)$ is the unique
minimal geodesic from $\mu_0$ to $\mu_1$
in the $L^2$-Wasserstein space $(\cP_2(M),d^W_2)$,
and $\pi$ is the unique optimal coupling of $\mu_0$ and $\mu_1$.

Similarly, $\Ric_{\infty} \ge K$ is equivalent to
\begin{equation}\label{eq:CD}
\Ent_m(\mu_t) \le
 (1-t)\Ent_m(\mu_0) +t\Ent_m(\mu_1) -\frac{K}{2}(1-t)td^W_2(\mu_0,\mu_1)^2.
\end{equation}
\end{theorem}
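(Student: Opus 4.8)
The plan is to prove Theorem~\ref{th:CD} by reducing everything to a one-dimensional (in the parameter $t$) differential inequality along the Wasserstein geodesic, obtained by differentiating the Jacobian of the optimal map twice and invoking the weighted Jacobi/Riccati analysis. I will treat the two cases ($N<\infty$ and $N=\infty$) in parallel, since the structure is identical: the geometric input in both is the behavior of Jacobi fields along the transport geodesics, controlled exactly as in the proof of Theorem~\ref{th:BG}.

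First I would set up the pointwise picture. Fix $\mu_0=\rho_0 m$, $\mu_1=\rho_1 m$ in $\cP_c^{\ac}(M,m)$, let $f$ be the $(d^2/2)$-convex potential and $\cF_t(x)=\exp_x[t\nabla f(x)]$ the optimal map, with $\gamma^x(t):=\cF_t(x)$ the (a.e.\ defined) transport geodesic through $x$. For $\mu_0$-a.e.\ $x$ the differential $(D\cF_t)_x$ is realized by $n-1$ Jacobi fields transverse to $\dot\gamma^x$ together with the one-dimensional Jacobi field $t\dot\gamma^x$ in the radial direction, exactly the set-up of Claim~\ref{cl:BG}. Writing $\cA(t)$ for the Gram matrix of the Jacobi fields and $y_x(t):=\|(D\cF_t)_x\|^{1/n}$ (or rather the relevant $(n)$-th root of the Jacobian), the Riccati computation in the proof of Theorem~\ref{th:BG} gives $(\log\det\cA)''$ in terms of $\tr\cU$, $\tr(\cU^2)$ and $\Ric(\dot\gamma^x)$; combined with $\tr(\cU^2)\ge(\tr\cU)^2/(n-1)$ this yields, for the $n$-th root $\ell(t):=\|(D\cF_t)_x\|^{1/n}$, the Bishop-type inequality $\ell''\le -\Ric(\dot\gamma^x)\ell/n$ along $\gamma^x$. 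Next I incorporate the weight: define the $m$-Jacobian $\|(D\cF_t)_x\|_m:=e^{\psi(x)-\psi(\gamma^x(t))}\|(D\cF_t)_x\|$ and set, for $N\in[n,\infty)$, $g_N(t):=\|(D\cF_t)_x\|_m^{1/N}$. A direct computation (the same algebraic trick as in Theorem~\ref{th:BMN}, completing the square in $\langle\nabla\psi,\dot\gamma^x\rangle$) shows
\begin{equation*}
(\log g_N)''(t) \le -\frac{1}{N}\Big(\Ric(\dot\gamma^x) + \Hess\psi(\dot\gamma^x,\dot\gamma^x) - \frac{\langle\nabla\psi,\dot\gamma^x\rangle^2}{N-n}\Big)|\dot\gamma^x|^2 \le -\frac{K}{N}|\dot\gamma^x|^2
\end{equation*}
under $\Ric_N\ge K$, together with the concavity-type estimate that upgrades this to $g_N''\le -(K/N)|\dot\gamma^x|^2 g_N$. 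Solving this second-order differential inequality against the comparison equation $\bs_{K,N}''+(K/(N-1))\bs_{K,N}=0$ (equation~$(\ref{eq:bseq})$) gives the pointwise distortion bound
\begin{equation*}
\|(D\cF_t)_x\|_m^{1/N} \ge (1-t)\,\beta^{1-t}_{K,N}\big(|\dot\gamma^x|\big)^{1/N} + t\,\beta^{t}_{K,N}\big(|\dot\gamma^x|\big)^{1/N}\|(D\cF_1)_x\|_m^{1/N},
\end{equation*}
exactly the role of $\beta^t_{K,N}$ advertised after $(\ref{eq:beta})$; for $N=\infty$ the analogous step gives $-\log g_\infty(t)$ concave with the quadratic defect $\tfrac{K}{2}(1-t)t|\dot\gamma^x|^2$, which is the $\beta^t_{K,\infty}=e^{K(1-t^2)r^2/6}$ regime after integration.

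Then I would integrate. For the $N<\infty$ case, apply the Monge-Amp\`ere equation $(\ref{eq:RMA})$ in the form $\rho_t(\cF_t(x))^{-1/N}=(\rho_0(x)^{-1/N})\,\|(D\cF_t)_x\|_m^{1/N}\cdot(\text{weight factors})$, multiply the pointwise distortion inequality by $\rho_0(x)$, integrate $d\mu_0(x)$, and use the change of variables $\cF_t$ to recognize the left side as $-S_N(\mu_t)$ and the right side as the two $\beta$-weighted integrals against the optimal coupling $\pi=(\Id,\cF_1)_\sharp\mu_0$; this yields $(\ref{eq:CDN})$. For $N=\infty$, integrating the concavity of $-\log\|(D\cF_t)_x\|_m$ against $\mu_0$ and using $\Ent_m(\mu_t)=\int\log\rho_t\,d\mu_t=\int\log\rho_0\,d\mu_0 - \int\log\|(D\cF_t)_x\|_m\,d\mu_0$ produces $(\ref{eq:CD})$, with the quadratic term becoming $-\tfrac{K}{2}(1-t)t\int|\dot\gamma^x|^2\,d\mu_0=-\tfrac{K}{2}(1-t)t\,d_2^W(\mu_0,\mu_1)^2$ by the first-variation identity for Wasserstein geodesics. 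For the converse directions I would localize: given the displacement convexity inequality, choose $\mu_0,\mu_1$ to be normalized restrictions of $m$ to small balls $B(\gamma(-\delta),\ve(1-a\delta))$ and $B(\gamma(\delta),\ve(1+a\delta))$ straddling a point $x$ along a fixed geodesic $\gamma$ with $\dot\gamma(0)=v$, exactly as in the second half of the proof of Theorem~\ref{th:BMN} (Figure~6), but now in the curved setting; a Taylor expansion in $\delta$ and $\ve$ of both sides of $(\ref{eq:CDN})$ (resp.\ $(\ref{eq:CD})$), in which the $\beta$-factor contributes the curvature term $K/(N-1)$ via $(\ref{eq:bseq})$ and the ball-volume ratios contribute $\Hess\psi$ and $|\nabla\psi|^2/(N-n)$, recovers $\Ric_N(v)\ge K$ after letting $\ve\to0$ and comparing $\delta^2$-coefficients.

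The main obstacle will be the pointwise differential inequality for the weighted Jacobian and its integration against the model ODE: one must carefully track that the radial Jacobi field contributes a factor $t$ (this is where the $(1-t)$ and $t$ prefactors and the normalization in $\beta^t_{K,N}(r)=(\bs_{K,N}(tr)/(t\bs_{K,N}(r)))^{N-1}$ come from), handle the a.e.-differentiability of the $(d^2/2)$-convex potential so that all these manipulations are justified $\mu_0$-a.e., and — most delicately — verify that the second-order differential inequality $g_N''\le-(K/N)|\dot\gamma|^2 g_N$ with the correct concavity reformulation genuinely implies the $\beta$-distortion bound, which is a Sturm-comparison argument for the function $g_N/\bs_{K,N}(|\dot\gamma|\,\cdot)$ analogous to the monotonicity of $(\det\cA)^{1/2(n-1)}/\bs_{K,n}$ established in the proof of Theorem~\ref{th:BG}. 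The converse direction's main subtlety is that in the curved case the balls $A_\pm$ and their midpoint set are no longer exactly related by $(1/2)A_-+(1/2)A_+=B(x,\ve)$; one must instead use the optimal transport between the uniform measures on $A_\pm$ and estimate it to second order, absorbing the curvature of $M$ into the error terms, which is legitimate because all relevant quantities are evaluated in the $\ve,\delta\to0$ limit where the manifold is Euclidean to leading order.
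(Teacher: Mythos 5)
Your overall plan matches the paper's outline: establish a pointwise inequality for $\bJ^{\psi}_t(x)$ along the transport geodesics via Jacobi/Riccati analysis, integrate against $\mu_0$ using the Monge--Amp\`ere equation, and for the converse localize to small balls straddling a geodesic and Taylor-expand as in Theorem~\ref{th:BMN}. The converse direction and the integration step are fine as described. The gap is in the derivation of the key pointwise estimate $(\ref{eq:J})$. You reduce to a single scalar ODE for $g_N(t):=\bJ^{\psi}_t(x)^{1/N}$, claiming $g_N''\le -(K/N)\theta^2 g_N$ with $\theta=d(x,\cF_1(x))$; incidentally, the ``upgrade'' you invoke from $(\log g_N)''\le -(K/N)\theta^2$ runs backwards, since $(\log g_N)''=g_N''/g_N-(g_N'/g_N)^2$, so the log-concavity estimate is a consequence of, not a route to, the linear second-order inequality. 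More seriously, even granting $g_N''\le-(K/N)\theta^2 g_N$ (which does hold, by Cauchy--Schwarz on the full $n\times n$ Jacobi matrix plus the weight), the coefficient $K/N$ does not match $(\ref{eq:bseq})$, whose coefficient is $K/(N-1)$. The Sturm comparison one actually gets from this scalar ODE is against $\bs_{K,N+1}$, which solves $\bs''+(K/N)\bs=0$, and it yields $g_N(t)\ge \bigl[\bs_{K,N+1}((1-t)\theta)/\bs_{K,N+1}(\theta)\bigr]g_N(0)+\bigl[\bs_{K,N+1}(t\theta)/\bs_{K,N+1}(\theta)\bigr]g_N(1)$. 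This is precisely the \emph{reduced} curvature-dimension inequality $(\ref{eq:CD*})$ (recall that $t\beta^t_{K,N+1}(r)^{1/N}=\bs_{K,N+1}(tr)/\bs_{K,N+1}(r)$), which the paper explicitly notes is strictly weaker than $(\ref{eq:CDN})$ for $K\neq 0$. A single scalar ODE simply cannot produce the sharper coefficients $t\beta^t_{K,N}(\theta)^{1/N}=t^{1/N}\bigl(\bs_{K,N}(t\theta)/\bs_{K,N}(\theta)\bigr)^{(N-1)/N}$.

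To obtain the full $\CD(K,N)$ one must not collapse the Jacobian into one scalar. Factor $\bJ^{\psi}_t$ into a longitudinal part $u(t)$, which is affine in $t$ because the tangential Jacobi field along the transport geodesic is a first-order polynomial (this is where the factor $t^{1/N}$ comes from after taking the $N$-th root), and a transverse-plus-weight part $v(t)$ whose $(N-1)$-th root satisfies the Bishop inequality $\bigl[v^{1/(N-1)}\bigr]''+\frac{K}{N-1}\theta^2 v^{1/(N-1)}\le 0$ via the Riccati argument of Claim~\ref{cl:BG} applied to the $(n-1)\times(n-1)$ transverse Jacobi matrix, with the weight absorbed as in Theorem~\ref{th:BMN}. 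Sturm comparison is applied only to this transverse factor, which is why the correct coefficient $K/(N-1)$ appears, and H\"older's inequality then glues $u^{1/N}$ and $v^{(N-1)/N}$ together to produce exactly the coefficients $t^{1/N}(\,\cdot\,)^{(N-1)/N}$; this is the only point at which the exponent $N-1$ (rather than $N$) in the definition $(\ref{eq:beta})$ of $\beta^t_{K,N}$ can enter. Your ``main obstacle'' paragraph gestures at the special role of the radial Jacobi field and the normalization in $\beta^t_{K,N}$, but the body of the argument treats the whole Jacobian at once, and that is precisely what loses the structure needed to recover the advertised distortion coefficients.
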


\begin{oproof}
We give a sketch of the proof for $N<\infty$ along the lines of
\cite{StII} and \cite{LVII}.
The case of $N=\infty$ goes along the essentially same line.

First, we assume $\Ric_N \ge K$.
Fix $\mu_0=\rho_0m, \mu_1=\rho_1m \in \cP_c^{\ac}(M,m)$ and take
a $(d^2/2)$-convex function $f:M \lra \R$ such that
$\cF_t(x):=\exp_x[t\nabla f(x)]$, $t \in [0,1]$, provides the unique minimal geodesic
$\mu_t=\rho_t m=(\cF_t)_{\sharp}\mu_0$ from $\mu_0$ to $\mu_1$.
Taking the weight $e^{-\psi}$ into account, we introduce the Jacobian
$\bJ^{\psi}_t(x):=e^{\psi(x)-\psi(\cF_t(x))}\| (D\cF_t)_x \|$ with respect to $m$
(like $\det_m$ in Theorem~\ref{th:BMN}).
Then it follows from the Monge-Amp\`ere equation $(\ref{eq:RMA})$
with respect to $\vol_n$ that
\begin{equation}\label{eq:MAp}
\rho_0(x)=\rho_t\big( \cF_t(x) \big) \bJ^{\psi}_t(x)
\end{equation}
for $\mu_0$-a.e.\ $x$ (i.e., the Monge-Amp\`ere equation with respect to $m$).

Now, the essential point is that optimal transport is performed along
geodesics $t \longmapsto \exp_x[t\nabla f(x)]=\cF_t(x)$.
Therefore its variational vector fields are Jacobi fields (recall $(\ref{eq:Ja})$),
and controlled by Ricci curvature.
Together with the weight control as in Theorem~\ref{th:BMN},
calculations somewhat similar to (but more involved than) Theorem~\ref{th:BG}
shows our key inequality
\begin{equation}\label{eq:J}
\bJ^{\psi}_t(x)^{1/N} \ge (1-t)\beta^{1-t}_{K,N}\big( d(x,\cF_1(x)) \big)^{1/N}
 +t\beta^t_{K,N}\big( d(x,\cF_1(x)) \big)^{1/N} \bJ^{\psi}_1(x)^{1/N}.
\end{equation}
This inequality can be thought of as an infinitesimal version of the Brunn-Minkowski
inequality (see $(\ref{eq:BMN})$ and Theorem~\ref{th:gBM}(i) as well).
As the change of variables formula and the Monge-Amp\`ere equation
$(\ref{eq:MAp})$ yield
\[ S_N(\mu_t) =-\int_M \rho_t(\cF_t)^{1-1/N} \bJ^{\psi}_t \,dm
 =-\int_M \bigg( \frac{\bJ^{\psi}_t}{\rho_0} \bigg)^{1/N} \,d\mu_0, \]
we obtain from $(\ref{eq:J})$ (and $(\ref{eq:MAp})$ again) that
\begin{align*}
S_N(\mu_t) &\le -(1-t)\int_M
 \frac{\beta^{1-t}_{K,N}(d(x,\cF_1(x)))^{1/N}}{\rho_0(x)^{1/N}} \,d\mu_0(x) \\
&\quad -t\int_M \frac{\beta^{t}_{K,N}(d(x,\cF_1(x)))^{1/N}}{\rho_1(\cF_1(x))^{1/N}}
 \,d\mu_0(x).
\end{align*}
This is the desired inequality $(\ref{eq:CDN})$,
for $\pi=(\Id_M \times \cF_1)_{\sharp}\mu_0$.

Second, we assume $(\ref{eq:CDN})$.
Then applying it to uniform distributions on balls (as in the proof of
Theorem~\ref{th:BMN}) shows $\Ric_N \ge K$.
More precisely, we use the generalized Brunn-Minkowski inequality
(Theorem \ref{th:gBM}) instead of $(\ref{eq:BMN})$.
$\qedd$
\end{oproof}

As $\beta^t_{0,N} \equiv 1$, the inequality $(\ref{eq:CDN})$ is simplified
into the convexity of $S_N$
\[ S_N(\mu_t) \le (1-t)S_N(\mu_0)+tS_N(\mu_1) \]
when $K=0$.
For $K \neq 0$, however, the $K$-convexity of $S_N$
\[ S_N(\mu_t) \le (1-t)S_N(\mu_0)+tS_N(\mu_1)
 -\frac{K}{2}(1-t)td^W_2(\mu_0,\mu_1)^2 \]
turns out uninteresting (see \cite[Theorem~1.3]{Stcon}).
This is a reason why we need to consider a more subtle inequality like $(\ref{eq:CDN})$.

Theorem \ref{th:CD} gives an answer to Question~\ref{Q:Ric},
for the conditions $(\ref{eq:CDN})$, $(\ref{eq:CD})$ are written in terms of
only distance and measure, without using the differentiable structure.
Then it is interesting to consider these conditions for general metric measure
spaces as `synthetic Ricci curvature bounds', and we should verify the stability.
We discuss them in the next section.

\begin{fread}
We refer to \cite{AGS}, \cite{Vi1} and \cite[Part~I]{Vi2} for the basics of optimal
transport theory and Wasserstein geometry.
McCann's~\cite{Mc2} fundamental result on the shape of optimal transport maps
is generalized to not necessarily compactly supported measures
in \cite{FF} and \cite{FG} (see also \cite[Chapter~10]{Vi2}).

See \cite{Ga} and \cite[Section~2.2]{Le} for the Brunn-Minkowski inequality
and related topics.
The Bakry-\'Emery tensor $\Ric_{\infty}$ was introduced in \cite{BE},
and its generalization $\Ric_N$ is due to Qian \cite{Qi}.
See also \cite{Lo} for geometric and topological applications,
\cite[Chapter~18]{Mo} and the references therein for minimal surface theory
in weighted manifolds (which are called {\it manifolds with density} there).

After McCann's~\cite{Mc1} pinoneering work on the convexity
of the relative entropy along geodesics in the Wasserstein space
(called the {\it displacement convexity}) over Euclidean spaces,
Cordero-Erausquin, McCann and Schmuckenschl\"ager~\cite{CMS1} first showed that
$\Ric \ge 0$ implies $(\ref{eq:CD})$ with $K=0$ in unweighted Riemannian manifolds.
They~\cite{CMS2} further proved that $\Ric_{\infty} \ge K$ implies
$(\ref{eq:CD})$ in the weighted situation.
Then Theorem~\ref{th:CD} is due to von Renesse and Sturm~\cite{vRS}, \cite{Stcon}
for $N=\infty$, and independently to Sturm~\cite{StI}, \cite{StII}
and Lott and Villani~\cite{LVI}, \cite{LVII} for $N<\infty$.

We comment on recent work on a variant of $(\ref{eq:CDN})$.
Studied in \cite{BaS1} is the following inequality (called the
{\it reduced curvature-dimension condition})
\begin{align}
S_N(\mu_t) &\le -(1-t)\int_{M \times M}
 \beta^{1-t}_{K,N+1}\big( d(x,y) \big)^{1/N} \rho_0(x)^{-1/N} \,d\pi(x,y)
 \nonumber\\
&\quad -t\int_{M \times M}
 \beta^t_{K,N+1}\big( d(x,y) \big)^{1/N} \rho_1(y)^{-1/N} \,d\pi(x,y).
 \label{eq:CD*}
\end{align}
Note the difference between
\[ t\beta^t_{K,N}(r)^{1/N}
 =t^{1/N} \bigg( \frac{\bs_{K,N}(tr)}{\bs_{K,N}(r)} \bigg)^{1-1/N}, \qquad
 t\beta^t_{K,N+1}(r)^{1/N} =\frac{\bs_{K,N+1}(tr)}{\bs_{K,N+1}(r)}. \]
We remark that $(\ref{eq:CD*})$ coincides with $(\ref{eq:CDN})$ when $K=0$,
and is weaker than $(\ref{eq:CDN})$ for general $K \neq 0$.
The condition $(\ref{eq:CD*})$ is also equivalent to $\Ric_N \ge K$
for Riemannian manifolds.
In the setting of metric measure spaces, $(\ref{eq:CD*})$ has
some advantages such as the tensorization and the localization properties
(see Subsection~\ref{ssc:mcp} for more details).
One drawback is that, as it is weaker than $(\ref{eq:CDN})$,
$(\ref{eq:CD*})$ derives slightly worse estimates than $(\ref{eq:CDN})$
(in the Bishop-Gromov volume comparison (Theorem~\ref{th:CDBG}),
the Bonnet-Myers diameter bound (Theorem~\ref{th:BMy}),
the Lichnerowicz inequality (Theorem~\ref{th:Lich}) etc.).
Nevertheless, such weaker estimates are sufficient for several topological applications.

See also \cite{Stcon} and \cite{OT} for the $K$-convexity of
generalized entropies (or free energies) and its characterization and applications.
It is discussed in \cite[Theorem~1.7]{Stcon} that there is a class of
functionals whose $K$-convexity is equivalent to $\Ric \ge K$ and $\dim \le N$
for unweighted Riemannian manifolds.
The choice of a functional is by no means unique, and it is unclear
how this observation relates to the curvature-dimension condition.
\end{fread}

\section{The curvature-dimension condition and stability}\label{sc:stab}%%%%%%

Motivated by Theorem~\ref{th:CD}, we introduce the curvature-dimension
condition for metric measure spaces and show that it is stable
under the measured Gromov-Hausdorff convergence.
In this and the next sections, $(X,d,m)$ will always be a metric measure
space in the sense of Section~\ref{sc:not}.

\subsection{The curvature-dimension condition}%%%%%%

We can regard the conditions $(\ref{eq:CDN})$, $(\ref{eq:CD})$ as convexity
estimates of the functionals $S_N$ and $\Ent_m$.
For the sake of consistency with the monotonicity of $\Ric_N$ in $N$
($\Ric_N \le \Ric_{N'}$ for $N \le N'$), we introduce
important classes of functionals (due to McCann~\cite{Mc1})
including $S_N$ and $\Ent_m$.

For $N \in [1,\infty)$, denote by $\DC_N$ ({\it displacement convexity class})
the set of continuous convex functions $U:[0,\infty) \lra \R$ such that $U(0)=0$
and that the function $\varphi(s)=s^N U(s^{-N})$ is convex on $(0,\infty)$.
Similarly, define $\DC_{\infty}$ as the set of continuous convex functions
$U:[0,\infty) \lra \R$ such that $U(0)=0$ and that
$\varphi(s)=e^s U(e^{-s})$ is convex on $\R$.
In both cases, the convexity of $U$ implies that $\varphi$ is non-increasing.
Observe the monotonicity, $\DC_{N'} \subset \DC_N$ holds for
$1 \le N \le N' \le \infty$.

For $\mu \in \cP(X)$, using its Lebesgue decomposition $\mu=\rho m +\mu^s$
into absolutely continuous and singular parts, we set
\[ U_m(\mu):=\int_X U(\rho) \,dm + U'(\infty) \mu^s(X),
 \qquad U'(\infty):=\lim_{r \to \infty} \frac{U(r)}{r}. \]
Note that $U'(\infty)$ indeed exists as $U(r)/r$ is non-decreasing.
In the case where $U'(\infty)=\infty$, we set $\infty \cdot 0:=0$ by convention.
The most important element of $\DC_N$ is the function $U(r)=Nr(1-r^{-1/N})$
which induces the R\'enyi entropy $(\ref{eq:Ren})$ in a slightly deformed form as
\begin{equation}\label{eq:Ren'}
U_m(\mu) =N\int_X \rho(1-\rho^{-1/N}) \,dm +N\mu^s(X)
 =N\bigg( 1-\int_X \rho^{1-1/N} \,dm \bigg).
\end{equation}
This extends $(\ref{eq:Ren})$ to whole $\cP(X)$.
Letting $N$ go to infinity, we have $U(r)=r\log r \in \DC_{\infty}$ as well as
the relative entropy (extending $(\ref{eq:Ent})$)
\begin{equation}\label{eq:Ent'}
U_m(\mu)=\int_X \rho \log\rho \,dm +\infty \cdot \mu^s(X).
\end{equation}

Let us denote by $\Gamma(X)$ the set of minimal geodesics $\gamma:[0,1] \lra X$
endowed with the distance
\[ d_{\Gamma(X)}(\gamma_1,\gamma_2)
 :=\sup_{t \in [0,1]}d_X\big( \gamma_1(t),\gamma_2(t) \big).\]
Define the evaluation map $e_t:\Gamma(X) \lra X$ for $t \in [0,1]$
as $e_t(\gamma):=\gamma(t)$, and note that this is $1$-Lipschitz.
A probability measure $\Pi \in \cP(\Gamma(X))$ is called a
{\it dynamical optimal transference plan} if the curve $\alpha(t):=(e_t)_{\sharp}\Pi$,
$t \in [0,1]$, is a minimal geodesic in $(\cP_2(X),d^W_2)$.
Then $\pi:=(e_0 \times e_1)_{\sharp}\Pi$ is an optimal coupling of
$\alpha(0)$ and $\alpha(1)$.
We remark that $\Pi$ is not uniquely determined by $\alpha$ and $\pi$,
that is to say, different plans $\Pi$ and $\Pi'$ could generate the same
minimal geodesic $\alpha$ and optimal coupling $\pi$.
If $(X,d)$ is locally compact (and hence proper), then any minimal geodesic in $\cP_2(X)$
is associated with a (not necessarily unique) dynamical optimal transference plan
(\cite[Proposition~2.10]{LVI}, \cite[Corollary~7.22]{Vi2}).

Now we are ready to present the precise definition of the curvature-dimension
condition in the form due to Lott and Villani (after Sturm and others,
see Further Reading of this and the previous sections).

\begin{definition}[The curvature-dimension condition]\label{df:CD}
Suppose that $m(B(x,r)) \in (0,\infty)$ holds for all $x \in X$ and $r \in (0,\infty)$.
For $K \in \R$ and $N \in (1,\infty]$, we say that a metric measure space $(X,d,m)$
satisfies the {\it curvature-dimension condition} $\CD(K,N)$ if,
for any $\mu_0=\rho_0 m+\mu_0^s$, $\mu_1=\rho_1 m+\mu_1^s \in \cP_b(X)$,
there exists a dynamical optimal transference plan $\Pi \in \cP(\Gamma(X))$
associated with a minimal geodesic $\alpha(t)=(e_t)_{\sharp}\Pi$, $t \in [0,1]$,
from $\mu_0$ to $\mu_1$ and an optimal coupling $\pi=(e_0 \times e_1)_{\sharp}\Pi$
of $\mu_0$ and $\mu_1$ such that we have
\begin{align}
U_m\big( \alpha(t) \big)
&\le (1-t)\int_{X \times X} \beta^{1-t}_{K,N}\big( d(x,y) \big)
 U\bigg( \frac{\rho_0(x)}{\beta^{1-t}_{K,N}(d(x,y))} \bigg) \,d\pi_x(y) dm(x) \nonumber\\
&\quad +t\int_{X \times X} \beta^t_{K,N}\big( d(x,y) \big)
 U\bigg( \frac{\rho_1(y)}{\beta^t_{K,N}(d(x,y))} \bigg) \,d\pi_y(x) dm(y) \nonumber\\
&\quad +U'(\infty)\{ (1-t)\mu_0^s(X)+t\mu_1^s(X) \} \label{eq:URic}
\end{align}
for all $U \in \DC_N$ and $t \in (0,1)$, where $\pi_x$ and $\pi_y$ denote
disintegrations of $\pi$ by $\mu_0$ and $\mu_1$, i.e.,
$d\pi(x,y)=d\pi_x(y)d\mu_0(x)=d\pi_y(x)d\mu_1(y)$.
\end{definition}

In the special case of $K=0$, as $\beta^t_{0,N} \equiv 1$,
the inequality $(\ref{eq:URic})$ means the convexity of $U_m$
\[ U_m\big( \alpha(t) \big) \le (1-t)U_m(\mu_0) +tU_m(\mu_1), \]
without referring to the optimal coupling $\pi$.
In the case where both $\mu_0$ and $\mu_1$ are absolutely continuous,
we have $d\pi(x,y)=\rho_0(x)d\pi_x(y)dm(x)=\rho_1(y)d\pi_y(x)dm(y)$
and hence $(\ref{eq:URic})$ is rewritten in a more symmetric form as
\begin{align}
U_m\big( \alpha(t) \big)
&\le (1-t)\int_{X \times X} \frac{\beta^{1-t}_{K,N}(d(x,y))}{\rho_0(x)}
 U\bigg( \frac{\rho_0(x)}{\beta^{1-t}_{K,N}(d(x,y))} \bigg) \,d\pi(x,y) \nonumber\\
&\quad +t\int_{X \times X} \frac{\beta^t_{K,N}(d(x,y))}{\rho_1(y)}
 U\bigg( \frac{\rho_1(y)}{\beta^t_{K,N}(d(x,y))} \bigg) \,d\pi(x,y). \label{eq:Uac}
\end{align}
Note that choosing $U(r)=Nr(1-r^{-1/N})$ and $U(r)=r\log r$ in $(\ref{eq:Uac})$
reduce to $(\ref{eq:CDN})$ and $(\ref{eq:CD})$, respectively.
We summarize remark on and the background of Definition~\ref{df:CD}.

\begin{remark}\label{rm:CD}
(a) It is easily checked that, if $(X,d,m)$ satisfies $\CD(K,N)$,
then the scaled metric measure space $(X,cd,c'm)$ for $c,c'>0$
satisfies $\CD(K/c^2,N)$.

(b) In Definition~\ref{df:CD}, to be precise, we need to impose the condition
\[ m\big( X \setminus B(x,\pi\sqrt{(N-1)/K}) \big)=0 \]
for all $x \in X$ if $K>0$ and $N<\infty$,
in order to stay inside the domain of $\beta^t_{K,N}$.
Nevertheless, this is always the case by virtue of the generalized
Bonnet-Myers theorem (Theorem~\ref{th:BMy}) below.

(c) Recall that $U(r)/r$ is non-decreasing, and observe that $\beta^t_{K,N}(r)$
is increasing in $K$ and decreasing in $N$.
Combining this with the monotonicity $\DC_{N'} \subset \DC_N$ for $N \le N'$
(and (b) above), we see that $\CD(K,N)$ implies $\CD(K',N')$ for all
$K' \le K$ and $N' \ge N$.
Therefore, in the condition $\CD(K,N)$, $K$ represents a lower bound of the Ricci curvature
and $N$ represents an upper bound of the dimension.

(d) The validity of $(\ref{eq:URic})$ along only `some' geodesic is essential
to establish the stability.
In fact, if we impose it for all geodesics between $\mu_0$ and $\mu_1$,
then it is not stable under convergence (in the sense of Theorem~\ref{th:stab}).
This is because, when a sequence $\{(X_i,d_i)\}_{i \in \N}$ converges to
the limit space $(X,d)$, there may be a geodesic in $X$ which can not be
represented as the limit of a sequence of geodesics in $X_i$.
Therefore the convexity along geodesics in $X_i$ does not necessarily imply
the convexity along all geodesics in $X$.
One typical example is a sequence of $\ell_p^n$-spaces as $p$ goes to $1$ or $\infty$.
Only straight lines are geodesics in $\ell_p^n$ with $1<p<\infty$,
while $\ell_1^n$ and $\ell_{\infty}^n$ have much more geodesics.
(See Figure~7, where $\gamma_i$ for all $i=0,\ldots,3$ are geodesic for $\ell^2_{\infty}$,
while only the straight line segment $\gamma_0$ is geodesic for $\ell^2_p$
with $1<p<\infty$.)
In fact, $\ell^n_p$ equipped with the Lebesgue measure satisfies $\CD(0,n)$
for all $1<p<\infty$ (Example~\ref{ex:Fins}(a)), but $\ell^n_1$ and $\ell^n_{\infty}$
do not satisfy $\CD(0,n)$.

%%%%%%%%%%%%%%%%%%%%%
\begin{center}
\begin{picture}(400,180)
\put(180,10){Figure~7}

\put(100,150){\line(1,0){200}}
\put(300,150){\line(0,-1){100}}
\put(100,150){\line(3,-1){150}}
\put(250,100){\line(1,-1){50}}
\qbezier(100,150)(100,50)(300,50)

\thicklines
\put(100,30){\vector(0,1){140}}
\put(80,50){\vector(1,0){240}}
\put(100,150){\line(2,-1){200}}

\put(85,145){$x$}
\put(295,35){$y$}
\put(170,100){$\gamma_0$}
\put(250,160){$\gamma_1$}
\put(210,120){$\gamma_2$}
\put(130,70){$\gamma_3$}

\end{picture}
\end{center}
%%%%%%%%%%%%%%%%%%%%%

(e) In Riemannian manifolds or, more generally, non-branching proper
metric measure spaces, we can reduce $(\ref{eq:URic})$ to a special case
from two aspects as follows.
If $(\ref{eq:Uac})$ holds for $U(r)=Nr(1-r^{-1/N})$ (or $U(r)=r\log r$ if $N=\infty$)
and all measures in $\cP^{\ac}_b(X,m)$ (and hence in $\cP^{\ac}_c(X,m)$)
with continuous densities,
then $(\ref{eq:URic})$ holds for all $U \in \DC_N$ and all measures in $\cP_b(X)$
(\cite[Proposition~4.2]{StII}, \cite[Proposition~3.21, Lemma~3.24]{LVI}).
In this sense, $(\ref{eq:CDN})$ and $(\ref{eq:CD})$ are essential among
the class of inequalities $(\ref{eq:URic})$.
A geodesic space is said to be non-branching if geodesics in it do not branch
(see Subsection~\ref{ssc:nb} for the precise definition).

(f) In Riemannian manifolds, $(\ref{eq:J})$ implies $(\ref{eq:Uac})$
for all $U \in \DC_N$.
Indeed, $\alpha(t)=\rho_t m$ is absolutely continuous and the change
of variables formula and the Monge-Amp\`ere equation $(\ref{eq:MAp})$ imply
\[ U_m\big( \alpha(t) \big) =\int_M U(\rho_t) \,dm
 =\int_M U\big( \rho_t(\cF_t) \big) \bJ^{\psi}_t \,dm
 =\int_M U\bigg( \frac{\rho_0}{\bJ^{\psi}_t} \bigg)
 \frac{\bJ^{\psi}_t}{\rho_0} \,d\mu_0. \]
For $N<\infty$, as $\varphi(s)=s^N U(s^{-N})$ is non-increasing and convex,
$(\ref{eq:J})$ yields
\begin{align*}
&U_m\big( \alpha(t) \big)
 \le \int_M \varphi\bigg(
 (1-t)\frac{\beta^{1-t}_{K,N}(d(x,\cF_1(x)))^{1/N}}{\rho_0(x)^{1/N}}
 +t\frac{\beta^t_{K,N}(d(x,\cF_1(x)))^{1/N}}{\rho_1(\cF_1(x))^{1/N}} \bigg)
 \,d\mu_0(x) \\
&\le \int_M \bigg\{ (1-t)\varphi\bigg(
 \frac{\beta^{1-t}_{K,N}(d(x,\cF_1(x)))^{1/N}}{\rho_0(x)^{1/N}} \bigg)
 +t\varphi\bigg(
 \frac{\beta^t_{K,N}(d(x,\cF_1(x)))^{1/N}}{\rho_1(\cF_1(x))^{1/N}} \bigg) \bigg\} \,d\mu_0(x).
\end{align*}
The case of $N=\infty$ is similar.
This means that the infinitesimal expression of $\CD(K,N)$
is always $(\ref{eq:J})$ whatever $U$ is, and various ways of
integration give rise to the definition of $\CD(K,N)$ involving $\DC_N$.

Although $\bJ^{\psi}_t$ relies on the differentiable structure of $M$,
it is possible to rewrite $(\ref{eq:J})$ through $(\ref{eq:MAp})$ as
\begin{align*}
\rho_t\big( \cF_t(x) \big)^{-1/N}
&\ge (1-t)\beta^{1-t}_{K,N}\big( d(x,\cF_1(x)) \big)^{1/N} \rho_0(x)^{-1/N} \\
&\quad +t\beta^t_{K,N}\big( d(x,\cF_1(x)) \big)^{1/N} \rho_1\big( \cF_1(x) \big)^{-1/N}
\end{align*}
(see \cite[Proposition~4.2(iv)]{StII}).
This makes sense in metric measure spaces, however, the integrated inequalities
(i.e., $(\ref{eq:URic})$, $(\ref{eq:Uac})$) are more convenient for verifying the stability.

(g) The role of the dynamical optimal transference plan $\Pi$ may seem unclear
in Definition~\ref{df:CD}, as only $\alpha$ and $\pi$ appear in $(\ref{eq:URic})$.
We use only $\alpha$ and $\pi$ also in applications in Section~\ref{sc:appl}.
As we shall see in Theorem~\ref{th:stab}, it is the stability in which
$\Pi$ plays a crucial role.
\end{remark}

\subsection{Stability and geometric background}%%%%%%%%%

As we mentioned in Remark~\ref{rm:Alex}, one geometric motivation
behind the curvature-dimension condition is the theory of Alexandrov spaces.
That is to say, we would like to find a way of formulating and
investigating singular spaces of Ricci curvature bounded below
in some sense (recall Question~\ref{Q:Ric}).
Then, what kind of singular spaces should we consider?
Here comes into play another deep theory of the precompactness
with respect to the convergence of metric (measure) spaces.
Briefly speaking, the precompactness ensures that a sequence of
Riemannian manifolds with a uniform lower Ricci curvature bound
contains a convergent subsequence.
Such a limit space is not a manifold any more, but should inherits some properties.
In order to make use of the curvature-dimension condition in the limit,
we need to establish that it is preserved under the convergence
(actually, the limit of Alexandrov spaces is again an Alexandrov space.)

We say that a map $\varphi:Y \lra X$ between metric spaces is
{\it $\ve$-approximating} for $\ve>0$ if
\[ \big| d_X\big( \varphi(y),\varphi(z) \big) -d_Y(y,z) \big| \le \ve \]
holds for all $y,z \in Y$ and if $\overline{B(\varphi(Y),\ve)}=X$.

\begin{definition}[Measured Gromov-Hausdorff convergence]\label{df:mGH}
Consider a sequence of metric measure spaces $\{ (X_i,d_i,m_i) \}_{i \in \N}$ and
another metric measure space $(X,d,m)$.

\begin{enumerate}[(1)]
\item(Compact case)
Assume that $(X_i,d_i)$ for all $i \in \N$ and $(X,d)$ are compact.
We say that $\{ (X_i,d_i,m_i) \}_{i \in \N}$ converges to $(X,d,m)$
in the sense of the {\it measured Gromov-Hausdorff convergence}
if there are sequences of positive numbers $\{ \ve_i \}_{i \in \N}$
and Borel maps $\{ \varphi_i:X_i \lra X \}_{i \in \N}$ such that
$\lim_{i \to \infty}\ve_i=0$, $\varphi_i$ is an $\ve_i$-approximating map,
and that $(\varphi_i)_{\sharp}m_i$ weakly converges to $m$.

\item(Noncompact case)
Assume that $(X_i,d_i)$ for all $i \in \N$ and $(X,d)$ are proper,
and fix base points $x_i \in X_i$ and $x \in X$.
We say that $\{ (X_i,d_i,m_i,x_i) \}_{i \in \N}$ converges to $(X,d,m,x)$
in the sense of the {\it pointed measured Gromov-Hausdorff convergence}
if, for all $R>0$, $\{ (\overline{B(x_i,R)},d_i,m_i) \}_{i \in \N}$ converges to
$(\overline{B(x,R)},d,m)$ in the sense of the measured Gromov-Hausdorff
convergence (as in $(1)$ above).
\end{enumerate}
\end{definition}

If we consider only distance structures $(X_i,d_i)$ and $(X,d)$ and remove
the weak convergence condition on $\varphi_i$, then it is the ({\it pointed})
{\it Gromov-Hausdorff convergence} under which the lower sectional curvature bound
in the sense of Alexandrov is known to be preserved.
The following observation (\cite[Proposition~4.1]{LVI}) says that
the Gromov-Hausdorff convergence of a sequence of metric spaces
is propagated to the Wasserstein spaces over them.

\begin{proposition}\label{pr:WGH}
If a sequence of compact metric spaces $\{(X_i,d_i)\}_{i \in \N}$ converges to
a compact metric space $(X,d)$ in the sense of the Gromov-Hausdorff convergence,
then so does the sequence of Wasserstein spaces $\{(\cP(X_i),d^W_2)\}_{i \in \N}$
to $(\cP(X),d^W_2)$.

More precisely, $\ve_i$-approximating maps $\varphi_i:X_i \lra X$ give rise to
$\theta(\ve_i)$-approximating maps $(\varphi_i)_{\sharp}:\cP(X_i) \lra \cP(X)$
such that $\theta$ is a universal function satisfying
$\lim_{\ve \downarrow 0}\theta(\ve)=0$.
\end{proposition}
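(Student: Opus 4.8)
The plan is to isolate the single quantitative fact behind the proposition: if $\varphi\colon Y\lra X$ is an $\ve$-approximating Borel map between compact metric spaces, then $\varphi_{\sharp}\colon(\cP(Y),d^W_2)\lra(\cP(X),d^W_2)$ is $\theta(\ve)$-approximating, where $\theta$ depends only on $\ve$ and on an upper bound $D$ for the diameters involved (such a $D$ exists uniformly along a Gromov--Hausdorff convergent sequence of compact spaces, since an $\ve$-approximating map forces $\diam Y\le\diam X+\ve$), and $\theta(\ve)\to 0$ as $\ve\downarrow 0$. Granting this, applying it with $\varphi=\varphi_i$, $\ve=\ve_i$ gives the proposition. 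Two things must be checked: that $\varphi_{\sharp}$ nearly preserves $d^W_2$, and that $\varphi_{\sharp}(\cP(Y))$ is $\theta(\ve)$-dense in $\cP(X)$.

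The easy half of the first point comes from transporting couplings. Given $\mu,\nu\in\cP(Y)$ with optimal coupling $\pi$, the measure $(\varphi\times\varphi)_{\sharp}\pi$ couples $\varphi_{\sharp}\mu$ and $\varphi_{\sharp}\nu$, and
\[ d^W_2(\varphi_{\sharp}\mu,\varphi_{\sharp}\nu)^2
 \le \int_{Y\times Y} d_X\big(\varphi(y),\varphi(y')\big)^2\,d\pi(y,y')
 \le \int_{Y\times Y}\big(d_Y(y,y')+\ve\big)^2\,d\pi
 \le d^W_2(\mu,\nu)^2+2\ve D+\ve^2, \]
so $d^W_2(\varphi_{\sharp}\mu,\varphi_{\sharp}\nu)\le d^W_2(\mu,\nu)+\sqrt{2\ve D+\ve^2}$. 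For the reverse inequality and for density, I would manufacture a Borel quasi-inverse $\psi\colon X\lra Y$: using $\overline{B(\varphi(Y),\ve)}=X$ and compactness, fix a finite $\ve$-net $\{x_1,\dots,x_m\}$ of $X$, choose $y_k\in Y$ with $d_X(\varphi(y_k),x_k)\le 2\ve$, partition $X$ into Borel cells $B_k\subset B(x_k,\ve)$, and set $\psi\equiv y_k$ on $B_k$. Then $\psi$ is a simple Borel map with $d_X(\varphi(\psi(x)),x)\le 3\ve$ and $d_Y(\psi(\varphi(y)),y)\le 4\ve$ for all points, and with metric distortion at most $C\ve$ for an absolute constant $C$.

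With $\psi$ in hand the rest is bookkeeping with the triangle inequality for $d^W_2$. For density: $(\varphi\circ\psi)_{\sharp}\lambda=\varphi_{\sharp}(\psi_{\sharp}\lambda)$ lies in $\varphi_{\sharp}(\cP(Y))$, and the coupling $(\Id_X\times(\varphi\circ\psi))_{\sharp}\lambda$ shows $d^W_2\big((\varphi\circ\psi)_{\sharp}\lambda,\lambda\big)\le 3\ve$, so $\varphi_{\sharp}(\cP(Y))$ is $3\ve$-dense in $\cP(X)$. For the reverse isometry estimate: since $\psi\circ\varphi$ is $4\ve$-close to $\Id_Y$ in the uniform metric, $d^W_2(\mu,\nu)\le 8\ve+d^W_2\big((\psi\circ\varphi)_{\sharp}\mu,(\psi\circ\varphi)_{\sharp}\nu\big)$, and applying the easy half of the previous paragraph to $\psi$ bounds the right-hand side by $8\ve+d^W_2(\varphi_{\sharp}\mu,\varphi_{\sharp}\nu)+\sqrt{2C\ve D+(C\ve)^2}$. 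Combining the two directions gives $\big|d^W_2(\varphi_{\sharp}\mu,\varphi_{\sharp}\nu)-d^W_2(\mu,\nu)\big|\le\theta(\ve)$ with $\theta(\ve)$ of order $\ve+\sqrt{\ve D}$, which tends to $0$.

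The genuinely delicate step is building the quasi-inverse as an honest Borel map; separability and compactness (finite $\ve$-nets, Voronoi-type cells) make it routine here, but it is precisely where the standing hypotheses enter, and it is the step I would be most careful about. The other point worth stating explicitly is that $\theta$ is universal in the required sense — it depends only on $\ve$ and the uniform diameter bound, not on the individual $X_i$ — which holds because $\sup_i\diam X_i<\infty$ for a sequence Gromov--Hausdorff converging to a compact space. Everything else is elementary manipulation of couplings, so I expect no serious difficulty beyond tracking the constants.
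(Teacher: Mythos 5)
The paper does not give its own proof of Proposition~\ref{pr:WGH}; it simply cites Lott--Villani~\cite[Proposition~4.1]{LVI}. Your argument is correct and is essentially the standard one (also the one in Lott--Villani): push forward an optimal coupling by $\varphi\times\varphi$ for the one-sided distance estimate, and build a Borel quasi-inverse $\psi$ from a finite $\ve$-net to get density and the reverse estimate. All the quantitative details check out: $d_X(\varphi\circ\psi,\Id_X)\le 3\ve$ and $d_Y(\psi\circ\varphi,\Id_Y)\le 4\ve$ are correct, $\psi$ is $C\ve$-approximating with, say, $C=7$, and the resulting $\theta(\ve)$ of order $\ve+\sqrt{\ve D}$ tends to $0$; your observation that $D=\sup_i\diam X_i$ is finite for a Gromov--Hausdorff convergent sequence of compact spaces is exactly what is needed to read ``universal'' as ``independent of $i$.''

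Two small points worth being explicit about if you write this up formally. First, because $\varphi$ is only Borel (not continuous) the image $\varphi(Y)$ need not be closed, so one can only choose $y_k$ with $d_X(\varphi(y_k),x_k)<\ve+\delta$ for arbitrary $\delta>0$; you implicitly handled this by allowing $2\ve$, but it is worth saying so. Second, in the step ``applying the easy half to $\psi$'' you only use the one-sided Lipschitz-type bound $d_Y(\psi(x),\psi(x'))\le d_X(x,x')+C\ve$, which follows from $\psi$'s distortion bound alone; you do not need $\psi$ itself to be a full approximating map (its image need not be $C\ve$-dense in $Y$). Neither of these affects the correctness of the argument.
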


In the noncompact case, the pointed Gromov-Hausdorff convergence
of $\{(X_i,d_i,x_i)\}_{i \in \N}$ to $(X,d,x)$ similarly implies
the Gromov-Hausdorff convergence
of $\{(\cP(\overline{B(x_i,R)}),d^W_2)\}_{i \in \N}$
to $(\cP(\overline{B(x,R)}),d^W_2)$ for all $R>0$ (instead of the convergence of
$(\overline{B(\delta_{x_i},R)},d^W_2)$ to $(\overline{B(\delta_x,R)},d^W_2)$).

The following inspiring precompactness theorem is established by
Gromov~\cite[Section~5.A]{Gr} for the Gromov-Hausdorff convergence,
and extended by Fukaya~\cite{Fu} to the measured case.

\begin{theorem}[Gromov-Fukaya precompactness]\label{th:pre}
Let $\{(M_i,g_i,\vol_{g_i},x_i)\}_{i \in \N}$ be a sequence
of pointed Riemannian manifolds such that
\[ \Ric_{g_i} \ge K, \qquad \dim M_i \le N \]
uniformly hold for some $K \in \R$ and $N \in \N$.
Then it contains a subsequence which is convergent to some pointed
proper metric measure space $(X,d,m,x)$ in the sense of the pointed
measured Gromov-Hausdorff convergence.
\end{theorem}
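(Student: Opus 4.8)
The plan is to combine two classical compactness principles: Gromov's precompactness criterion for the pointed Gromov--Hausdorff topology, fed by the Bishop--Gromov volume comparison (Theorem~\ref{th:BG}), together with weak-$*$ sequential compactness of bounded families of measures on a fixed compact metric space.

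\medskip\noindent
\textbf{Step 1 (Gromov--Hausdorff precompactness).}
Gromov's criterion says that a family of pointed proper metric spaces is precompact in the pointed Gromov--Hausdorff topology as soon as it is \emph{uniformly totally bounded}: for every $R>0$ and $\ve>0$ there is $L=L(R,\ve)\in\N$, independent of the member of the family, such that each ball of radius $R$ about the base point is covered by at most $L$ balls of radius $\ve$. To verify this for $\{(M_i,d_{g_i},x_i)\}_{i\in\N}$, fix $R,\ve>0$, pick a maximal $\ve$-separated set $\{p_1,\dots,p_L\}\subset B(x_i,R)$, and observe that the balls $B(p_j,\ve/2)$ are pairwise disjoint and lie inside $B(x_i,R+\ve/2)\subset B(p_j,2R+\ve/2)$, whence
\[
 L\cdot\min_{1\le j\le L}\vol_{g_i}\big(B(p_j,\ve/2)\big)
 \ \le\ \max_{1\le j\le L}\vol_{g_i}\big(B(p_j,2R+\ve/2)\big).
\]
Theorem~\ref{th:BG}, applied at each $p_j$ using $\Ric_{g_i}\ge K$ and $\dim M_i\le N$, bounds each quotient $\vol_{g_i}(B(p_j,2R+\ve/2))/\vol_{g_i}(B(p_j,\ve/2))$ by the explicit model ratio in the dimension $\dim M_i\in\{1,\dots,N\}$; taking the maximum over these finitely many values gives $L\le L(R,\ve)$ depending only on $K,N,R,\ve$ (when $K>0$, Corollary~\ref{cr:BG} caps $\diam M_i$ uniformly, so only radii below that diameter need treating). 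Since by maximality $\{B(p_j,\ve)\}$ covers $B(x_i,R)$, the family is uniformly totally bounded. Gromov's theorem then provides a subsequence (not relabelled), a pointed metric space $(X,d,x)$, and $\ve_i$-approximating Borel maps $\varphi_i:M_i\lra X$ with $\ve_i\downarrow 0$; the limit $(X,d)$ is complete, inherits uniform total boundedness of bounded balls, hence is proper and separable, and is geodesic as a complete locally compact length space (Hopf--Rinow, cf.\ Section~\ref{sc:not}).

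\medskip\noindent
\textbf{Step 2 (compactness of the measures).}
Normalize $m_i:=\vol_{g_i}/\vol_{g_i}(B(x_i,1))$, which does not affect Step~1; then Theorem~\ref{th:BG} bounds $m_i(\overline{B(x_i,R)})$ above by a constant $C(K,N,R)$ uniformly in $i$, for each $R>0$. Fix $R\in\N$: pushing $m_i|_{\overline{B(x_i,R)}}$ forward to $X$ via $\varphi_i$ gives Borel measures on the compact space $\overline{B(x,R)}$ of total mass $\le C(K,N,R)$, so by Prokhorov's theorem a subsequence converges weakly to a finite measure $m_R$. A diagonal extraction over $R=1,2,3,\dots$ yields one subsequence valid for all $R$ simultaneously; uniqueness of weak limits makes the $m_R$ consistent, so they glue to a Borel measure $m$ on $X$ with $0<m(\overline{B(x,R)})<\infty$ and $(\varphi_i)_\sharp m_i\to m$ weakly on each $\overline{B(x,R)}$. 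This is exactly convergence in the sense of Definition~\ref{df:mGH}(2), and $(X,d,m,x)$ is the desired limit.

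\medskip\noindent
\textbf{Main obstacle.}
The delicate point is not the extraction of subsequences but confirming that $(X,d,m)$ is a metric measure space in the sense of Section~\ref{sc:not}, i.e.\ that $m(B(y,r))>0$ for \emph{every} $y\in X$ and $r>0$ rather than merely that $m$ is finite on bounded sets: lower volume bounds can degenerate under \emph{collapse} of dimension (thin tori being the model case). In the non-collapsing regime a uniform bound $\vol_{g_i}(B(\cdot,r))\ge v(r)>0$ restores positivity; in general one passes to $\supp m$. A secondary, routine but care-demanding issue is to check that the weak limits obtained scale-by-scale on the balls $\overline{B(x,R)}$ are mutually compatible under the $\ve_i$-approximations, so that they truly assemble into a single pointed measured Gromov--Hausdorff limit.
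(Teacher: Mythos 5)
Your argument is correct and is exactly the one the paper alludes to (the paper gives no detailed proof, only the remark that Bishop--Gromov yields a uniform doubling bound, from which Gromov's criterion supplies precompactness). One small imprecision in Step~1: the displayed inequality
\[
L\cdot\min_{j}\vol_{g_i}\big(B(p_j,\ve/2)\big)\le\max_{j}\vol_{g_i}\big(B(p_j,2R+\ve/2)\big)
\]
puts the minimum and maximum at possibly different $p_j$, so Bishop--Gromov applied ``at each $p_j$'' does not directly bound the quotient of the two sides. The clean version fixes a single $j^{*}$ attaining $\min_j\vol_{g_i}(B(p_j,\ve/2))$ and writes
\[
L\cdot\vol_{g_i}\big(B(p_{j^{*}},\ve/2)\big)\le\vol_{g_i}\big(B(x_i,R+\ve/2)\big)\le\vol_{g_i}\big(B(p_{j^{*}},2R+\ve/2)\big),
\]
after which Theorem~\ref{th:BG} applied at $p_{j^{*}}$ alone gives $L\le L(K,N,R,\ve)$. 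Your normalization $m_i=\vol_{g_i}/\vol_{g_i}(B(x_i,1))$ in Step~2 is precisely the rescaling $c_i\vol_{g_i}$ that the paper itself recommends in the paragraph following the theorem, and your flagged ``main obstacle'' --- that $m$ may fail to be positive on all balls in the collapsed case --- is why the paper imposes that positivity as a separate hypothesis in the stability theorem (Theorem~\ref{th:stab}) rather than as a conclusion of the precompactness statement; passing to $\supp m$ or assuming non-collapse, as you say, is the standard remedy.
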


To be more precise, we choose a complete space as the limit, and then
the properness follows from our hypotheses $\Ric \ge K$ and $\dim \le N$.
The key ingredient of the proof is the Bishop-Gromov volume comparison
(Theorem~\ref{th:BG}) from which we derive an upper bound of the
doubling constant $\sup_{x \in M,\, r \le R}\vol_g(B(x,2r))/\vol_g(B(x,r))$
for each $R \in (0,\infty)$.

By virtue of Theorem~\ref{th:pre}, starting from a sequence
of Riemannian manifolds with a uniform lower Ricci curvature bound,
we find the limit space of some subsequence.
Such a limit space is not a manifold any more,
but should have some inherited properties.
The stability of the curvature-dimension condition ensures that
we can use it for the investigation of these limit spaces.

We remark that the measures of balls $\vol_{g_i}(B(x_i,R))$ for $i \in \N$
have a uniform upper bound depending only on $K,N$ and $R$
thanks to Theorem~\ref{th:BG}.
However, it could tend to zero, and then we can not obtain any information on $(X,d,m)$.
Therefore we should take a scaling $c_i \vol_{g_i}$ with some appropriate
constant $c_i>1$.
It does not change anything because the weighted Ricci curvature
is invariant under scalings of the measure
(the weight function of $\tilde{m}=cm$ is $\psi_{\tilde{m}}=\psi_m-\log c$).
By the same reasoning, it is natural to assume that any bounded open ball
has a finite positive measure in the next theorem (see also Remark~\ref{rm:CD}(a)).

\begin{theorem}[Stability]\label{th:stab}
Assume that a sequence of pointed proper metric measure spaces
$\{ (X_i,d_i,m_i,x_i) \}_{i \in \N}$ uniformly satisfies $\CD(K,N)$
for some $K \in \R$ and $N \in (1,\infty]$ and that
it converges to a pointed proper metric measure space $(X,d,m,x)$
in the sense of the pointed measured Gromov-Hausdorff convergence.
If, moreover, $0<m(B(x,r))<\infty$ holds for all $x \in X$
and $r \in (0,\infty)$, then $(X,d,m)$ satisfies $\CD(K,N)$.
\end{theorem}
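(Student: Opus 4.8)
The plan is to carry out the approximation-and-compactness argument of Lott--Villani \cite[Section~4]{LVI}, whose whole point is that the dynamical optimal transference plan $\Pi$ in Definition~\ref{df:CD} is precisely the object that survives the limit. Fix $\mu_0=\rho_0m+\mu_0^s$ and $\mu_1=\rho_1m+\mu_1^s$ in $\cP_b(X)$; their supports lie in a ball $\overline{B(x,R)}$, and since $X$ is geodesic every minimal geodesic of $(\cP_2(X),d^W_2)$ joining them, and every geodesic carrying it, stays in a larger ball $\overline{B(x,R')}$ with $R'$ depending only on $R$. Passing to the compact pieces $\overline{B(x_i,R')}$ and $\overline{B(x,R')}$, which converge in the measured Gromov--Hausdorff sense with $\ve_i$-approximating maps $\varphi_i$ and $(\varphi_i)_{\sharp}m_i\to m$, Proposition~\ref{pr:WGH} gives that $(\cP(\overline{B(x_i,R')}),d^W_2)$ converges to $(\cP(\overline{B(x,R')}),d^W_2)$, with $(\varphi_i)_{\sharp}$ a $\theta(\ve_i)$-approximating map. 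Using the weak convergence $(\varphi_i)_{\sharp}m_i\to m$ together with a measurable almost-inverse of $\varphi_i$, I would then construct endpoint measures $\mu_0^i,\mu_1^i$ on $\overline{B(x_i,R')}$ such that $(\varphi_i)_{\sharp}\mu_j^i\to\mu_j$ in $d^W_2$ and, moreover, the densities $\rho_j^i:=d\mu_j^i/dm_i$ and the singular masses are arranged so that the whole right-hand side of $(\ref{eq:URic})$ computed for $\mu_j^i$ converges to the one for $\mu_j$; this is the routine but slightly delicate measure-approximation step.

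Next, apply $\CD(K,N)$ in each $X_i$: it yields a single dynamical optimal transference plan $\Pi_i\in\cP(\Gamma(X_i))$, with associated minimal geodesic $\alpha_i(t)=(e_t)_{\sharp}\Pi_i$ from $\mu_0^i$ to $\mu_1^i$ and optimal coupling $\pi_i=(e_0\times e_1)_{\sharp}\Pi_i$, satisfying $(\ref{eq:URic})$ in $X_i$ for every $U\in\DC_N$ and $t\in(0,1)$. Push $\Pi_i$ forward along $\gamma\longmapsto\varphi_i\circ\gamma$ to a measure $\widetilde\Pi_i$ on the space of curves valued in $\overline{B(x,R')}$. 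The curves in $\supp\widetilde\Pi_i$ are $O(\ve_i)$-almost minimal geodesics of uniformly bounded length, hence uniformly Lipschitz and valued in a fixed compact set, so by Arzel\`a--Ascoli the family $\{\widetilde\Pi_i\}$ is tight; extract a weakly convergent subsequence $\widetilde\Pi_i\to\Pi$. One then checks that $\Pi$ is concentrated on $\Gamma(X)$ (the almost-geodesic defect closes up in the limit, using completeness and properness of $X$), that $\alpha(t):=(e_t)_{\sharp}\Pi$ is a minimal geodesic of $(\cP_2(X),d^W_2)$ from $\mu_0$ to $\mu_1$ (the endpoint marginals converge and the evaluation maps are equicontinuous), and that $\pi:=(e_0\times e_1)_{\sharp}\Pi$ is an optimal coupling of $\mu_0$ and $\mu_1$ (it is the limit of the optimal couplings $(\varphi_i\times\varphi_i)_{\sharp}\pi_i$, and optimality of a coupling is stable under Gromov--Hausdorff convergence of the quadratic cost). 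Thus $\Pi$ is a legitimate dynamical optimal transference plan for $(\mu_0,\mu_1)$ in $X$.

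It remains to pass $(\ref{eq:URic})$ to the limit for each fixed $U\in\DC_N$ and $t\in(0,1)$. For the left-hand side I would use the lower semicontinuity of $\mu\longmapsto U_m(\mu)$ under the joint weak and measured Gromov--Hausdorff convergence, obtained from the Legendre-duality representation $U_m(\mu)=\sup_{\phi}\big(\int_X\phi\,d\mu-\int_X U^{*}(\phi)\,dm\big)$ (with the customary care for the singular part and the slope $U'(\infty)$), so that $U_m(\alpha(t))\le\liminf_i U_{m_i}(\alpha_i(t))$. For the right-hand side, continuity of $r\mapsto\beta^t_{K,N}(r)$, convergence of the geodesics and couplings, and the arranged convergence of the densities $\rho_j^i$ and of the singular masses $\mu_j^{i,s}(X_i)$ give that the right-hand side of $(\ref{eq:URic})$ for $X_i$ converges to the one for $X$. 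Taking $\liminf_i$ in the inequality $(\ref{eq:URic})$ valid in each $X_i$ then yields $(\ref{eq:URic})$ for $(X,d,m)$, $\alpha$ and $\pi$; since $\mu_0,\mu_1\in\cP_b(X)$ were arbitrary, $(X,d,m)$ satisfies $\CD(K,N)$.

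\emph{Where the difficulty lies.} Two steps carry the weight. The first is the lower semicontinuity of $U_m$ when the reference measure $m_i$ is itself only weakly converging inside a varying space; this is what forces the duality argument and the careful bookkeeping of $\mu^s$ and $U'(\infty)$, and it absorbs most of the technical effort. The second is showing that the weak limit $\Pi$ of the plans $\Pi_i$ is genuinely supported on minimal geodesics of $X$, i.e.\ that ``almost-geodesics converge to geodesics''; here completeness and properness of $X$ together with the uniform length bounds are indispensable, and this is exactly why Definition~\ref{df:CD} is phrased through the dynamical plan $\Pi$ rather than through an arbitrary geodesic (cf.\ Remark~\ref{rm:CD}(d),(g)).
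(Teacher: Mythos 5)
Your strategy is the same as the paper's: pull the problem back to $X_i$, invoke $\CD(K,N)$ there to get dynamical plans $\Pi_i$, use compactness/tightness to extract a limit plan $\Pi$ on $\Gamma(X)$, and then pass the inequality $(\ref{eq:URic})$ to the limit using lower semicontinuity of $U_m$ on the left and continuity of the data on the right. Where you leave a real gap is at the very first step. The paper does not attempt to treat arbitrary $\mu_0,\mu_1\in\cP_b(X)$ directly; it first reduces, via an approximation argument inside the fixed space $X$ (\cite[Proposition~3.21, Lemma~3.24]{LVI}, cf.\ Remark~\ref{rm:CD}(e)), to endpoint measures $\mu=\rho m$, $\nu=\sigma m$ with \emph{continuous}, compactly supported densities. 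This makes the transfer to $X_i$ trivial (take $\mu_i \propto (\rho\circ\varphi_i)\,m_i$), and, more importantly, it is exactly what makes the right-hand side of $(\ref{eq:URic})$ converge: the integrand $U\!\bigl(\rho_0(x)/\beta^{1-t}_{K,N}(d(x,y))\bigr)\,\beta^{1-t}_{K,N}(d(x,y))$ is then a continuous function to be integrated against $\pi_i\to\pi$, so weak convergence of the couplings suffices. In your version you keep $\mu_0,\mu_1$ general, construct $\mu_j^i$ with a ``measurable almost-inverse'' of $\varphi_i$ and promise to ``arrange'' the densities and singular masses so that the right-hand side converges; but that right-hand side also depends on the optimal couplings $\pi_i$ coming out of $\CD(K,N)$ in $X_i$, which are not under your control, and weak convergence of $\pi_i$ does not by itself give convergence of integrals of non-continuous functions of $\rho_0^i$. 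So this ``routine but slightly delicate'' step is in fact the step the paper deliberately avoids by the density reduction, and as written it is not justified. The remainder of your outline matches the paper's: the tightness/Arzel\`a--Ascoli extraction corresponds to the compactness theorem the paper cites (\cite[Theorem~A.45]{LVII}), and your Legendre-duality lower semicontinuity of $U_m$ is one way to obtain what the paper phrases as the two-step monotonicity-under-$(\varphi_i)_\sharp$ plus lower semicontinuity in the fixed space $(X,m)$ (\cite[Theorem~B.33]{LVI}).
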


\begin{oproof}
The proof of stability goes as follows (along the lines of \cite{LVI}, \cite{LVII}).
First of all, as we consider only measures with bounded (and hence compact)
support in Definition~\ref{df:CD}, we can restrict ourselves to measures
with continuous density and compact support.
Indeed, it implies by approximation the general case
(\cite[Proposition~3.21, Lemma~3.24]{LVI}, see also Remark~\ref{rm:CD}(e)).
Given continuous measures $\mu=\rho m,\nu=\sigma m \in \cP_c^{\ac}(X,m)$
and $\ve_i$-approximating maps $\varphi_i:X_i \lra X$ as in Definition~\ref{df:mGH},
we consider
\[ \mu_i=\frac{\rho \circ \varphi_i}{\int_{X_i}\rho \circ \varphi_i \,dm_i} \cdot m_i,
 \quad
 \nu_i=\frac{\sigma \circ \varphi_i}{\int_{X_i}\sigma \circ \varphi_i \,dm_i} \cdot m_i
 \ \in \cP_c^{\ac}(X_i,m_i) \]
and take a dynamical optimal transference plan $\Pi_i \in \cP(\Gamma(X_i))$
from $\mu_i$ to $\nu_i$ satisfying $(\ref{eq:Uac})$.
Note that $(\varphi_i)_{\sharp}\mu_i$ and $(\varphi_i)_{\sharp}\nu_i$
weakly converge to $\mu$ and $\nu$, respectively, thanks to the continuity
of $\rho$ and $\sigma$.

By a compactness argument (\cite[Theorem~A.45]{LVII}), extracting a subsequence
if necessary, $\Pi_i$ converges to some dynamical transference plan
$\Pi \in \cP(\Gamma(X))$ from $\mu$ to $\nu$ such that, setting
\[ \alpha_i(t):=(e_t)_{\sharp}\Pi_i, \quad \alpha(t):=(e_t)_{\sharp}\Pi, \quad
 \pi_i:=(e_0 \times e_1)_{\sharp}\Pi_i, \quad \pi:=(e_0 \times e_1)_{\sharp}\Pi, \]
$(\varphi_i)_{\sharp}\alpha_i$ and $(\varphi_i \times \varphi_i)_{\sharp}\pi_i$
weakly converge to $\alpha$ and $\pi$, respectively.
Then it follows from Proposition~\ref{pr:WGH} that
$\alpha$ is a minimal geodesic from $\mu$ to $\nu$ and that
$\pi$ is an optimal coupling of $\mu$ and $\nu$.

On the one hand, the right-hand side of $(\ref{eq:Uac})$ for $\pi_i$
converges to that for $\pi$ by virtue of the continuous densities.
On the other hand, the monotonicity
\[ U_{(\varphi_i)_{\sharp}m_i}\big( (\varphi_i)_{\sharp}[\alpha_i(t)] \big)
 \le U_{m_i}\big( \alpha_i(t) \big) \]
and the lower semi-continuity
\[ U_m(\alpha(t)) \le \liminf_{i \to \infty}
 U_{(\varphi_i)_{\sharp}m_i}\big( (\varphi_i)_{\sharp}[\alpha_i(t)] \big) \]
hold true in general (\cite[Theorem~B.33]{LVI}).
Therefore we obtain $(\ref{eq:Uac})$ for $\Pi$ and complete the proof.
$\qedd$
\end{oproof}

\begin{fread}
The definition of the curvature-dimension condition is much indebted to
McCann's influential work~\cite{Mc1} introducing the important class of functions
$\DC_N$ as well as the displacement convexity along geodesics
in the Wasserstein space (see Further Reading in Section~\ref{sc:CD}).
Otto and Villani's work~\cite{OV} on the relation between such convexity
of the entropy and several functional inequalities was also inspiring.

The term `curvature-dimension condition' is used by Sturm~\cite{StI}, \cite{StII}
(and also in \cite{Vi2}) following Bakry and \'Emery's celebrated work \cite{BE}.
Sturm's condition requires that $(\ref{eq:URic})$ is satisfied for all absolutely
continuous measures and $U=S_{N'}$ for all $N' \in [N,\infty]$.
Lott and Villani~\cite{LVI}, \cite{LVII}, independently of Sturm,
introduced the condition as in Definition~\ref{th:CD}
and call it {\it $N$-Ricci curvature bounded from below by $K$}.
These conditions are equivalent in non-branching spaces
(see Remark~\ref{rm:CD}(e) and Subsection~\ref{ssc:nb}).
In locally compact non-branching spaces, it is also possible to extend $(\ref{eq:URic})$
from compactly supported measures to not necessarily compactly
supported measures (see \cite{FV}).

See \cite{Fu}, \cite[Chapter~3, Section~5.A]{Gr} and
\cite[Chapters~7, 8]{BBI} for the basics of (measured)
Gromov-Hausdorff convergence and for precompactness theorems.
The stability under the measured Gromov-Hausdorff convergence
we presented above is due to Lott and Villani~\cite{LVI}, \cite{LVII}.
Sturm~\cite{StI}, \cite{StII} also proved the stability with respect to a different,
his own notion of convegence induced from his ${\mathbf D}$-{\it distance}
between metric measure spaces.
Roughly speaking, the ${\mathbf D}$-distance takes couplings not only for
measures, but also for distances (see \cite{StI} for more details).

We also refer to celebrated work of Cheeger and Colding \cite{CC}
(mentioned in Further Reading of Section~\ref{sc:intro}) for related
geometric approach toward the investigation of limit spaces of Riemannian
manifolds of Ricci curvature bounded below.
Their strategy is to fully use the fact that it is the limit of Riemannian manifolds.
They reveal the detailed local structure of such limit spaces, however,
it also turns out that the limit spaces can have highly wild structures
(see a survey \cite{We} and the references therein).
We can not directly extend Cheeger and Colding's theory to metric measure
spaces with the curvature-dimension condition.
Their key tool is the Cheeger-Gromoll type splitting theorem,
but Banach spaces prevent us to apply it under the curvature-dimension
condition (see Subsection~\ref{ssc:Q}(C) for more details).
\end{fread}

\section{Geometric applications}\label{sc:appl}%%%%%%%%%%%%%%%%%

Metric measure spaces satisfying the curvature-dimension condition $\CD(K,N)$ enjoy
many properties common to `spaces of dimension $\le N$ and Ricci curvature $\ge K$'.
To be more precise, though $N \in (1,\infty]$ is not necessarily an integer,
we will obtain estimates numerically extended to non-integer $N$.
Proofs based on optimal transport theory themselves are interesting and inspiring.
Although we concentrate on geometric applications in this article,
there are also many analytic applications including
the Talagrand inequality, logarithmic Sobolev inequality
(and hence the normal concentration of measures),
global Poincar\'e inequality and so forth (see \cite{LVI}, \cite{LVII}).

\subsection{Generalized Brunn-Minkowski inequality and applications}%%%%%

Our first application is a generalization of the Brunn-Minkowski inequality
$(\ref{eq:BM})$, $(\ref{eq:BMN})$ to curved spaces.
This follows from the curvature-dimension condition $(\ref{eq:Uac})$
applied to $S_N$ and $\Ent_m$ (i.e., $(\ref{eq:CDN})$ and $(\ref{eq:CD})$)
between uniform distributions on two measurable sets.
In the particular case of $K=0$, we obtain the concavity of $m^{1/N}$ or $\log m$
as in $(\ref{eq:BM})$, $(\ref{eq:BMN})$.
Given two sets $A,B \subset X$ and $t \in (0,1)$, we denote by $Z_t(A,B)$
the set of points $\gamma(t)$ such that $\gamma:[0,1] \lra X$ is a minimal geodesic
with $\gamma(0) \in A$ and $\gamma(1) \in B$.
We remark that $Z_t(A,B)$ is not necessarily measurable regardless
the measurability of $A$ and $B$, however, it is not a problem
because $m$ is regular (see Remark~\ref{rm:mble}).

The following theorem is essentially contained in von Renesse and Sturm~\cite{vRS}
for $N=\infty$, and due to Sturm~\cite{StII} for $N<\infty$.
Again we will be implicitly indebted to Theorem~\ref{th:BMy}
that guarantees that $\diam X\le \pi\sqrt{(N-1)/K}$ if $K>0$ and $N<\infty$
(see Remark~\ref{rm:CD}(b)).
Figure~8 represents rough image of the theorem, $Z_{1/2}(A,B)$
has more measure in a positively curved space, and less measure
in a negatively curved space (compare this with Figure~1).

%%%%%%%%%%%%%%%%%%%%%%%%
\begin{center}
\begin{picture}(400,200)
\put(180,10){Figure~8}
\put(90,30){$\Ric>0$}
\put(290,30){$\Ric<0$}

\thicklines
\put(40,120){\ellipse{30}{100}}
\put(110,120){\shade\ellipse{54}{120}}
\put(180,120){\ellipse{30}{100}}

\qbezier(40,170)(110,190)(180,170)
\qbezier(40,70)(110,50)(180,70)

\put(250,120){\ellipse{30}{100}}
\put(310,120){\shade\ellipse{20}{80}}
\put(370,120){\ellipse{30}{100}}

\qbezier(250,170)(310,150)(370,170)
\qbezier(250,70)(310,90)(370,70)

\put(35,115){$A$}
\put(175,115){$B$}
\put(83,115){$Z_{1/2}(A,B)$}
\put(245,115){$A$}
\put(365,115){$B$}
\put(285,60){$Z_{1/2}(A,B)$}

\end{picture}
\end{center}
%%%%%%%%%%%%%%%%%%%%%%%%

\begin{theorem}[Generalized Brunn-Minkowski inequality]\label{th:gBM}
Take a metric measure space $(X,d,m)$ satisfying $\CD(K,N)$ and
two measurable sets $A,B \subset X$.
\begin{enumerate}[{\rm (i)}]
\item If $N \in (1,\infty)$, then we have
\begin{align*}
m\big( Z_t(A,B) \big)^{1/N}
&\ge (1-t)\inf_{x \in A,\, y \in B}\beta^{1-t}_{K,N}\big( d(x,y) \big)^{1/N}
 \cdot m(A)^{1/N} \\
&\quad +t\inf_{x \in A,\, y \in B}\beta^t_{K,N}\big( d(x,y) \big)^{1/N}
 \cdot m(B)^{1/N}
\end{align*}
for all $t \in (0,1)$.

\item If $N=\infty$ and $0<m(A),m(B)<\infty$, then we have
\begin{align*}
&\log m\big( Z_t(A,B) \big) \\
&\ge (1-t)\log m(A) +t\log m(B) +\frac{K}{2}(1-t)t
 d^W_2\bigg( \frac{\chi_A}{m(A)}m,\frac{\chi_B}{m(B)}m \bigg)^2
\end{align*}
for all $t \in (0,1)$.
\end{enumerate}
\end{theorem}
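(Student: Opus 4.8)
The plan is to derive both parts directly from the curvature-dimension condition applied to the uniform distributions on $A$ and $B$. After first disposing of the degenerate cases ($m(A)$ or $m(B)$ equal to $0$ or $\infty$, or $A,B$ unbounded) by the truncation and point-mass devices used in Theorems~\ref{th:BMn} and~\ref{th:BMN}, I may assume that $A$ and $B$ are bounded (hence relatively compact, $X$ being proper) with $0<m(A),m(B)<\infty$. Then I set
\[ \mu_0:=\frac{\chi_A}{m(A)}\,m, \qquad \mu_1:=\frac{\chi_B}{m(B)}\,m \ \in \cP_c^{\ac}(X,m), \]
and apply $\CD(K,N)$ to obtain a dynamical optimal transference plan $\Pi\in\cP(\Gamma(X))$ with associated minimal geodesic $\alpha(t)=(e_t)_\sharp\Pi$ and optimal coupling $\pi=(e_0\times e_1)_\sharp\Pi$ satisfying $(\ref{eq:Uac})$ for every $U\in\DC_N$. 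Since $\pi(A\times B)=1$, the plan $\Pi$ is concentrated on minimal geodesics with endpoints in $A$ and $B$, so $\alpha(t)$ is concentrated on $Z_t(A,B)$; consequently $\alpha(t)(W)=1$ for every measurable $W\supseteq Z_t(A,B)$. (When $K>0$ and $N<\infty$, Theorem~\ref{th:BMy} gives $\diam X\le\pi\sqrt{(N-1)/K}$, so the $\beta^t_{K,N}$-factors below are well defined on $A\times B$.)

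For part~(i) I would take $U(r)=Nr(1-r^{-1/N})$, so that $(\ref{eq:Uac})$ becomes $(\ref{eq:CDN})$ with $\alpha(t)$ in place of $\mu_t$. On the right-hand side the densities satisfy $\rho_0^{-1/N}\equiv m(A)^{1/N}$ on $A$ and $\rho_1^{-1/N}\equiv m(B)^{1/N}$ on $B$, and $\pi$ is supported on $A\times B$, hence
\[ S_N\big(\alpha(t)\big)\le -(1-t)\,m(A)^{1/N}\inf_{x\in A,\,y\in B}\beta^{1-t}_{K,N}\big(d(x,y)\big)^{1/N}
  -t\,m(B)^{1/N}\inf_{x\in A,\,y\in B}\beta^{t}_{K,N}\big(d(x,y)\big)^{1/N}. \]
For the matching lower bound, write $\alpha(t)=\rho_t m+\alpha(t)^s$; the absolutely continuous part is supported in $W$, so H\"older's inequality gives $-S_N(\alpha(t))=\int_W\rho_t^{1-1/N}\,dm\le\big(\int_W\rho_t\,dm\big)^{1-1/N}m(W)^{1/N}\le m(W)^{1/N}$, i.e.\ $S_N(\alpha(t))\ge -m(W)^{1/N}$. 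Combining the two displays, multiplying by $-1$, and taking the infimum over measurable $W\supseteq Z_t(A,B)$ --- legitimate since $m$ is regular, cf.\ Remark~\ref{rm:mble} --- yields the inequality of part~(i).

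Part~(ii) runs along the same lines with $U(r)=r\log r$, so that $(\ref{eq:Uac})$ becomes $(\ref{eq:CD})$. A direct computation gives $\Ent_m(\mu_0)=-\log m(A)$ and $\Ent_m(\mu_1)=-\log m(B)$, and the Wasserstein term in $(\ref{eq:CD})$ is exactly the one appearing in the statement. For the lower bound on $\Ent_m(\alpha(t))$, we may assume $m\big(Z_t(A,B)\big)<\infty$ (otherwise the asserted inequality is trivial) and $\alpha(t)$ absolutely continuous (otherwise $\Ent_m(\alpha(t))=+\infty$, which contradicts $(\ref{eq:CD})$ since its right-hand side is finite, so $\alpha(t)$ is necessarily absolutely continuous); then Jensen's inequality for the convex function $r\log r$ and the probability measure $m(W)^{-1}m|_W$ gives $\Ent_m(\alpha(t))=\int_W\rho_t\log\rho_t\,dm\ge -\log m(W)$ for any measurable $W\supseteq Z_t(A,B)$ of finite measure. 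Inserting these estimates into $(\ref{eq:CD})$ and optimizing over $W$ completes the argument.

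The result is essentially a corollary of Definition~\ref{df:CD}, so I do not anticipate a genuinely hard step; the care lies in the bookkeeping. The reduction to the nondegenerate bounded case must be carried out as in Theorems~\ref{th:BMn} and~\ref{th:BMN} --- in particular, $m(A)=0$ is handled by running the argument with $\mu_0=\delta_{x_0}$ for some $x_0\in A$, where the $\rho_0$-contribution to $(\ref{eq:URic})$ drops out because $U(0)=0$, and $A$ or $B$ unbounded is handled by intersecting with large balls and letting the radius tend to infinity. One must also verify that $\alpha(t)$ genuinely lives on $Z_t(A,B)$ and that enlarging it to a measurable $W$ before taking an infimum changes nothing. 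The one spot that rewards a moment's thought is checking that $\inf_{A\times B}\beta^{t}_{K,N}(d(x,y))^{1/N}$ is a strictly positive real --- even when $\diam X$ approaches $\pi\sqrt{(N-1)/K}$ for $K>0$, or $d$ is large on $A\times B$ for $K<0$ --- which is secured by the boundedness of $A,B$ together with Theorem~\ref{th:BMy}.
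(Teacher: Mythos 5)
Your proof is correct and follows essentially the same route as the paper: apply $\CD(K,N)$ to the uniform distributions on $A$ and $B$ with $U(r)=Nr(1-r^{-1/N})$ (resp.\ $U(r)=r\log r$), bound the right-hand side of $(\ref{eq:Uac})$ using the constant densities and the infimum of $\beta^t_{K,N}$ over $A\times B$, and convert the left-hand side into a bound on $m(Z_t(A,B))$ via H\"older (resp.\ Jensen), after reducing to bounded sets of positive finite measure exactly as the paper does by citing the devices of Theorem~\ref{th:BMn}.
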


\begin{proof}
Similarly to Theorem~\ref{th:BMn}, we can assume that $A$ and $B$
are bounded and of positive measure.
Set
\[ \mu_0:=\frac{\chi_A}{m(A)} \cdot m, \quad
 \mu_1:=\frac{\chi_B}{m(B)} \cdot m, \quad
 \hat{\beta}^t:=\inf_{x \in A,\, y \in B}\beta^t_{K,N}\big( d(x,y) \big). \]

(i) We consider $U(r)=Nr(1-r^{-1/N})$ and recall from $(\ref{eq:Ren'})$ that,
for $\mu=\rho m+\mu^s$,
\[ U_m(\mu) =N\bigg( 1-\int_X \rho^{1-1/N} \,dm \bigg). \]
Hence it follows from $\CD(K,N)$ that there is a minimal geodesic
$\alpha:[0,1] \lra \cP(X)$ from $\mu_0$ to $\mu_1$ as well as an optimal
coupling $\pi$ such that, for all $t \in (0,1)$,
\begin{align*}
-\int_X \rho_t^{1-1/N} \,dm
&\le -(1-t)\int_{X \times X}
 \big\{ m(A)\beta^{1-t}_{K,N}\big( d(x,y) \big) \big\}^{1/N} \,d\pi(x,y) \\
&\quad -t\int_{X \times X}
 \big\{ m(B)\beta^t_{K,N}\big( d(x,y) \big) \big\}^{1/N} \,d\pi(x,y) \\
&\le -(1-t)(\hat{\beta}^{1-t})^{1/N} m(A)^{1/N}
 -t(\hat{\beta}^t)^{1/N} m(B)^{1/N},
\end{align*}
where we set $\alpha(t)=\rho_t m +\mu^s_t$.
Then the H\"older inequality yields
\[ \int_X \rho_t^{-1/N} \cdot \rho_t \,dm
 \le \bigg( \int_{\supp\rho_t} \rho_t^{-1} \cdot \rho_t \,dm \bigg)^{1/N}
 = m(\supp\rho_t)^{1/N} \le m\big( Z_t(A,B) \big)^{1/N}. \]
This completes the proof for $N<\infty$.

(ii) We argue similarly and obtain from $(\ref{eq:CD})$ that
\[ \Ent_m \big( \alpha(t) \big) \le
 -(1-t)\log m(A) -t\log m(B) -\frac{K}{2}(1-t)td^W_2(\mu_0,\mu_1)^2. \]
Note that, since $\Ent_m(\alpha(t))<\infty$, $\alpha(t)$ is absolutely continuous
and written as $\alpha(t)=\rho_t m$.
Furthermore, Jensen's inequality applied to the convex function
$s \longmapsto s\log s$ shows
\begin{align*}
\Ent\big( \alpha(t) \big)
&= m(\supp\rho_t) \int_{\supp\rho_t} \rho_t \log\rho_t \,\frac{dm}{m(\supp\rho_t)} \\
&\ge m(\supp\rho_t) \int_{\supp\rho_t} \rho_t \,\frac{dm}{m(\supp\rho_t)} \cdot
 \log\bigg( \int_{\supp\rho_t} \rho_t \,\frac{dm}{m(\supp\rho_t)} \bigg) \\
&=-\log m(\supp\rho_t) \ge -\log m\big( Z_t(A,B) \big).
\end{align*}
We complete the proof.
$\qedd$
\end{proof}

As a corollary, we find that $m$ has no atom unless $X$ consists of a single point.

\begin{corollary}\label{cr:atom}
If $(X,d,m)$ contains more than two points
and if it satisfies $\CD(K,N)$ with some $K \in \R$ and $N \in (1,\infty]$,
then any one point set $\{x\} \subset X$ has null measure.
\end{corollary}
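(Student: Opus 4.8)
The plan is to argue by contradiction, using the generalized Brunn--Minkowski inequality (Theorem~\ref{th:gBM}). Suppose that some $x \in X$ satisfies $a:=m(\{x\})>0$; note $a<\infty$ since $\{x\}\subset B(x,1)$. Because $(X,d)$ is a geodesic space with more than one point, it contains points arbitrarily close to $x$, so I would pick $y\neq x$ with $d(x,y)$ as small as desired, and then $\ve\in(0,d(x,y)/2)$ small enough that $d(x,y)+\ve<\pi\sqrt{(N-1)/K}$ in the case $K>0$, $N<\infty$. Put $B:=\overline{B(y,\ve)}$, so that $0<m(B)<\infty$ by the standing assumption on $(X,d,m)$, $\delta:=d(x,B)\geq d(x,y)-\ve>0$, and $D:=\sup_{z\in B}d(x,z)\leq d(x,y)+\ve<\infty$; in particular every distance $d(x,z)$ with $z\in B$ lies in the interval on which $\beta^t_{K,N}$ is defined. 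The idea is then to apply Theorem~\ref{th:gBM} to $A=\{x\}$ and this $B$ and let $t\downarrow 0$: the lower bound it produces for $m(Z_t(\{x\},B))$ stays bounded away from zero, while a direct geometric estimate forces $m(Z_t(\{x\},B))\to 0$.

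For the geometric estimate, I would observe that each point of $Z_t(\{x\},B)$ lies on a minimal geodesic $\gamma$ with $\gamma(0)=x$ and $\gamma(1)\in B$, hence at distance $t\,d(x,\gamma(1))\in[t\delta,tD]$ from $x$; consequently
\[ Z_t(\{x\},B)\ \subseteq\ \overline{B(x,tD)}\setminus B(x,t\delta). \]
As $t\downarrow 0$, both $\overline{B(x,tD)}$ and $B(x,t\delta)$ decrease to $\{x\}$, and they are contained in a fixed ball of finite $m$-measure, so $m(\overline{B(x,tD)})\to a$ and $m(B(x,t\delta))\to a$ by continuity of $m$ from above; combined with the inclusion above, and the regularity of $m$ (which legitimises reading $m(Z_t(\{x\},B))$ as an outer measure, cf.\ Remark~\ref{rm:mble}), this gives $m(Z_t(\{x\},B))\to 0$. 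On the other hand, Theorem~\ref{th:gBM}(i) yields, for all $t\in(0,1)$,
\[ m\big(Z_t(\{x\},B)\big)^{1/N}\ \geq\ (1-t)\Big(\inf_{z\in B}\beta^{1-t}_{K,N}\big(d(x,z)\big)\Big)^{1/N}a^{1/N}, \]
and since $\beta^{1-t}_{K,N}(r)\to 1$ uniformly for $r$ in the compact interval $[\delta,D]$ as $t\downarrow 0$ (recall $\beta^{1}_{K,N}\equiv 1$), the right-hand side tends to $a^{1/N}>0$, contradicting $m(Z_t(\{x\},B))\to 0$. For $N=\infty$ the argument is identical with Theorem~\ref{th:gBM}(ii): its right-hand side tends to $\log a$ (the $d^W_2$-term is bounded and carries the vanishing factor $(1-t)t$), whereas $\log m(Z_t(\{x\},B))\to-\infty$.

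Most of this is routine bookkeeping: continuity of $m$ along a decreasing family of finite-measure sets, uniform convergence $\beta^{1-t}_{K,N}\to 1$, and the outer-measure interpretation of the possibly non-measurable $Z_t(\{x\},B)$. The one step that genuinely matters is the exclusion of the inner ball $B(x,t\delta)$, i.e.\ placing $B$ at positive distance from $x$: if one used only $Z_t(\{x\},B)\subseteq\overline{B(x,tD)}$, the estimate would degenerate to $a^{1/N}\leq\liminf_{t\downarrow 0}m(Z_t(\{x\},B))^{1/N}\leq\lim_{t\downarrow 0}m(\overline{B(x,tD)})^{1/N}=a^{1/N}$, an equality rather than a contradiction. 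It is the annular localisation of the transported atom near $x$ that makes the argument work.
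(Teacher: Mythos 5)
Your argument is correct, and it differs from the paper's proof in a mild but genuine way. Ohta's proof first reduces to $N=\infty$ via Remark~\ref{rm:CD}(c), takes $B$ to be the annulus $B(x,2r)\setminus B(x,r)$, applies Theorem~\ref{th:gBM}(ii) along the \emph{discrete} sequence $t=2^{-k}$, observes that $Z_{2^{-k}}(\{x\},B)\subset B(x,2^{1-k}r)\setminus B(x,2^{-k}r)$ gives pairwise disjoint annuli inside the finite-measure ball $B(x,2r)$, and derives the contradiction by \emph{summing} those measures: the Brunn--Minkowski bound forces each term to stay bounded below by roughly $m(\{x\})>0$, while disjointness forces the sum to be finite. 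You instead keep $B$ a small closed ball located at positive distance from $x$, treat $N<\infty$ and $N=\infty$ directly with the respective halves of Theorem~\ref{th:gBM}, and pass to a \emph{continuous} limit $t\downarrow 0$: the containment $Z_t(\{x\},B)\subseteq\overline{B(x,tD)}\setminus B(x,t\delta)$ plus continuity of measure from above for the nested families $\overline{B(x,tD)}$ and $B(x,t\delta)$ squeezes $m(Z_t(\{x\},B))$ to zero, while the Brunn--Minkowski lower bound tends to $a^{1/N}$ (resp.\ $\log a$). The two proofs use the same central idea (apply generalized Brunn--Minkowski with $A=\{x\}$ and exploit that $Z_t(\{x\},B)$ sits in a small annulus), but your version trades the discretize-and-sum step for a continuity argument and requires the extra observation that $\beta^{1-t}_{K,N}\to 1$ uniformly on a compact interval bounded away from $0$; the paper's version avoids that uniformity issue entirely by working with $N=\infty$ (where $\beta^t_{K,\infty}$ is given by the explicit Gaussian formula) and sidesteps the limit by a counting argument. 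Both are complete; you have correctly identified the essential point, namely that $B$ must be kept at positive distance from $x$ so that the transported set lands in an annulus (not a ball) around $x$, whose measure tends to $0$ rather than to the atom mass $a$.
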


\begin{proof}
It is sufficient to show the case of $N=\infty$ (due to Remark~\ref{rm:CD}(c)).
Put $A=\{x\}$ and assume that $m(\{x\})>0$ holds.
Take $r>0$ with $m(B(x,2r) \setminus B(x,r))>0$
(it is the case for small $r>0$), and note that
\[ Z_t\big( \{x\},B(x,2r) \setminus B(x,r)) \big)
 \subset B(x,2tr) \setminus B(x,tr) \]
for all $t \in (0,1)$.
Thus we apply Theorem~\ref{th:gBM}(ii) with $t=2^{-k}$, $k \in \N$, and find
\begin{align*}
&\log m\big( B(x,2^{1-k}r) \setminus B(x,2^{-k}r) \big) \\
&\ge (1-2^{-k})\log\big( m(\{x\}) \big)
 +2^{-k}\log\big( m(B(x,2r) \setminus B(x,r)) \big)
 -\frac{|K|}{2}(1-2^{-k})2^{-k}(2r)^2.
\end{align*}
Summing this up in $k \in \N$, we observe
\[ \sum_{k=1}^{\infty}
 \log m\big( B(x,2^{1-k}r) \setminus B(x,2^{-k}r) \big) =\infty. \]
This is a contradiction since we have
\begin{align*}
\sum_{k=1}^{\infty}
 \log m\big( B(x,2^{1-k}r) \setminus B(x,2^{-k}r) \big)
&\le \sum_{k=1}^{\infty} m\big( B(x,2^{1-k}r) \setminus B(x,2^{-k}r) \big) \\
&= m\big( B(x,2r) \setminus \{x\} \big) <\infty.
\end{align*}
$\qedd$
\end{proof}

Under $\CD(K,N)$ of the finite dimension $N<\infty$,
applying Theorem~\ref{th:gBM}(i) to thin annuli shows a generalization
of the Bishop-Gromov volume comparison theorem
(Theorem~\ref{th:BG}, see also $(\ref{eq:NBG})$).

\begin{theorem}[Generalized Bishop-Gromov volume comparison]\label{th:CDBG}
Suppose that a metric measure space $(X,d,m)$ satisfies $\CD(K,N)$ with
$K \in \R$ and $N \in (1,\infty)$.
Then we have
\[ \frac{m(B(x,R))}{m(B(x,r))} \le
 \frac{\int_0^R \bs_{K,N}(t)^{N-1}\,dt}{\int_0^r \bs_{K,N}(t)^{N-1}\,dt} \]
for all $x \in X$ and $0<r<R\ (\le \pi\sqrt{(N-1)/K}$ if $K>0)$.
\end{theorem}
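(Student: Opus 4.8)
The plan is to follow the proof of Theorem~\ref{th:BG}, replacing its area--comparison step $(\ref{eq:Bish2})$ by the curvature--dimension inequality of Theorem~\ref{th:gBM}(i) applied to infinitesimally thin annuli. Fix $x_0\in X$. If $X=\{x_0\}$ the statement is trivial; otherwise $(X,d)$ is geodesic with at least two points, hence has uncountably many points, and Corollary~\ref{cr:atom} gives $m(\{x_0\})=0$. Write $V(\rho):=m(B(x_0,\rho))$ and abbreviate $\bs:=\bs_{K,N}$. The goal is to prove that $\rho\mapsto V(\rho)\big/\int_0^{\rho}\bs(u)^{N-1}\,du$ is non-increasing, whose special case $r<R$ is the assertion.

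First I derive the \emph{shell inequality}. Given $\rho>0$, a small $\delta>0$ and $t\in(0,1)$, apply Theorem~\ref{th:gBM}(i) with $A=\{x_0\}$ and $B=B(x_0,\rho+\delta)\setminus B(x_0,\rho)$. Because $m(A)=0$ the first term on its right-hand side vanishes, and because every minimal geodesic from $x_0$ to a point $y$ with $d(x_0,y)\in[\rho,\rho+\delta)$ has $d(x_0,\gamma(t))=t\,d(x_0,y)\in[t\rho,t(\rho+\delta))$, we get $Z_t(\{x_0\},B)\subseteq B(x_0,t(\rho+\delta))\setminus B(x_0,t\rho)$. Using $t^{N}\beta^{t}_{K,N}(d)=t\,(\bs(td)/\bs(d))^{N-1}$ and raising Theorem~\ref{th:gBM}(i) to the power $N$, this gives
\[ V\big(t(\rho+\delta)\big)-V(t\rho)\ \ge\ t\Big(\inf_{\rho\le d\le\rho+\delta}\frac{\bs(td)}{\bs(d)}\Big)^{N-1}\big(V(\rho+\delta)-V(\rho)\big). \]
Sending $\delta\downarrow0$, this is the metric--measure analogue of the area comparison $(\ref{eq:Bish2})$: it says, informally, that $s\mapsto V'(s)/\bs(s)^{N-1}$ is non-increasing.

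Second I turn the shell inequality into the monotonicity of the volume ratio. It already forces $V$ to be continuous: if a sphere $\{d(x_0,\cdot)=\rho_0\}$ carried positive mass, then letting $\delta\downarrow0$ above (and using continuity of $\beta^{t}_{K,N}$) would yield $m(\{d(x_0,\cdot)=t\rho_0\})\ge t(\bs(t\rho_0)/\bs(\rho_0))^{N-1}m(\{d(x_0,\cdot)=\rho_0\})>0$ for every $t\in(0,1)$, i.e.\ uncountably many disjoint spheres of positive measure inside $B(x_0,\rho_0)$, contradicting $m(B(x_0,\rho_0))<\infty$. With $V$ continuous and non-decreasing, a Lebesgue-differentiation / Riemann--Stieltjes argument, refining partitions of $[0,R]$ and passing the shell inequality to the limit, upgrades it to the statement that, for all $0<a<b<c$ (with $c$ below $\pi\sqrt{(N-1)/K}$ when $K>0$),
\[ \frac{m(\{b\le d(x_0,\cdot)<c\})}{\int_b^c\bs(u)^{N-1}\,du}\ \le\ \frac{m(\{a\le d(x_0,\cdot)<b\})}{\int_a^b\bs(u)^{N-1}\,du}. \]
Letting $a\downarrow0$ and rearranging gives $V(c)\int_0^b\bs(u)^{N-1}\,du\le V(b)\int_0^c\bs(u)^{N-1}\,du$, which is the desired monotonicity; the case $(b,c)=(r,R)$ is the theorem. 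Equivalently, from the displayed successive--interval inequality one may reproduce verbatim the computation closing the proof of Theorem~\ref{th:BG}, with $\bA\,dt$ replaced by the Lebesgue--Stieltjes measure $dV$ and $\bS(t)=\bs(t)^{N-1}$. The borderline case $K>0$, $R=\pi\sqrt{(N-1)/K}$ follows by letting $R'\uparrow R$ and using continuity of $V$ and of $\rho\mapsto\int_0^{\rho}\bs(u)^{N-1}\,du$.

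The whole weight of the argument sits in the one-line shell inequality and the elementary endgame; the one genuinely delicate point is the real-analysis step just above --- extracting the integrated successive-interval comparison from the dilation-type shell inequality, i.e.\ controlling limits of Riemann--Stieltjes sums (where the continuity of $V$ is used, and where a fully self-contained treatment must either rule out or absorb a possible singular-continuous part of the distribution of $d(x_0,\cdot)$ under $m$). This is precisely where the smooth proof of Theorem~\ref{th:BG} invokes the pointwise monotonicity of $\bA/\bS$.
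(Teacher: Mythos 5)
Your high-level plan is the same as the paper's: apply Theorem~\ref{th:gBM}(i) to a singleton and a thin annulus (using Corollary~\ref{cr:atom} to kill the $m(A)$ term), obtain a ``shell inequality'' comparing an annulus to its dilate, integrate that to get $(\ref{eq:gBG})$, and then close with the same elementary computation that ends the proof of Theorem~\ref{th:BG}. Your shell inequality is, up to sending $\delta\downarrow 0$, exactly the paper's inequality $(\ref{eq:CDBG})$, so the first step is right.

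The genuine gap is precisely where you flag it, and it is not a routine real-analysis step to be outsourced. The shell inequality relates the interval $[t\rho,t(\rho+\delta))$ to $[\rho,\rho+\delta)$ --- two intervals on different scales linked by a dilation --- whereas the estimate you actually want compares adjacent intervals at the \emph{same} scale. In the absence of a density for $V(\rho)=m(B(x_0,\rho))$, the passage from the dilation statement to an integrated comparison is the whole content of the theorem, and writing ``a Lebesgue-differentiation / Riemann--Stieltjes argument, refining partitions \dots upgrades it'' is not a proof: you yourself concede that one must ``either rule out or absorb a possible singular-continuous part'' of the push-forward of $m$ under $d(x_0,\cdot)$, and that concession is an acknowledgment that the argument is missing, not a remark about tidying up. Moreover, the intermediate successive-interval inequality you claim for arbitrary $0<a<b<c$ is stronger than what you end up using (only $a\downarrow 0$), and it is not established. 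The detour through continuity of $V$ is also unnecessary once the gap is filled correctly.

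The paper fills this gap with a fully elementary, discrete argument that needs neither continuity of $V$ nor any Lebesgue--Stieltjes machinery. Set $h:=\bs_{K,N}^{N-1}$ and $t_L:=(r/R)^{1/L}$, and use the finite-$\delta$ inequality $(\ref{eq:CDBG})$ twice at geometric scales: once with $(r_1,r_2,t)=(t_Lr,\,r,\,t_L^{l-1})$, summing over $l\ge 1$ (by $\sigma$-additivity and $m(\{x\})=0$ the left-hand sides telescope to $m(B(x,r))$); and once with $(r_1,r_2,t)=(t_L^{l-L}r,\,t_L^{l-1-L}r,\,t_L^{L-l+1})$, summing over $l=1,\dots,L$ (the right-hand sides telescope to $m(B(x,R)\setminus B(x,r))$). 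Combining the two telescoping bounds and letting $L\to\infty$, the coefficient sums are Riemann sums converging to $\int_r^R h$ and $\int_0^r h$, which yields $(\ref{eq:gBG})$ directly. That geometric-scale telescoping at $t_L^l$ is exactly the ingredient missing from your proposal, and it is the clean way to bypass the measure-theoretic concerns you correctly identified but did not resolve.
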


\begin{proof}
The proof is essentially the same as the Riemannian case.
We apply Theorem~\ref{th:gBM}(i) to concentric thin annuli and obtain
an estimate corresponding to the Bishop area comparison of
concentric spheres $(\ref{eq:Bish2})$.
Then we take the sum and the limit (instead of integration),
and obtain the theorem.

We give more detailed calculation for thoroughness.
For any annulus $B(x,r_2) \setminus B(x,r_1)$ and $t \in (0,1)$,
Theorem~\ref{th:gBM}(i) and Corollary~\ref{cr:atom} yield that
\begin{align}
m\big( B(x,tr_2) \setminus B(x,tr_1) \big)
&\ge t^N \inf_{d \in [r_1,r_2]} \bigg( \frac{\bs_{K,N}(td)}{t\bs_{K,N}(d)} \bigg)^{N-1}
 m\big( B(x,r_2) \setminus B(x,r_1) \big) \nonumber\\
&\ge t \cdot \frac{\inf_{d \in [r_1,r_2]}\bs_{K,N}(td)^{N-1}}
 {\sup_{d \in [r_1,r_2]}\bs_{K,N}(d)^{N-1}} m\big( B(x,r_2) \setminus B(x,r_1) \big).
 \label{eq:CDBG}
\end{align}
This corresponds to $(\ref{eq:Bish2})$ in the Riemannian case.
Set $h(t):=\bs_{K,N}(t)^{N-1}$ for brevity,
and put $t_L:=(r/R)^{1/L}<1$ for $L \in \N$.
Applying $(\ref{eq:CDBG})$ to $r_1=t_Lr$, $r_2=r$ and $t=t_L^{l-1}$
for $l \in \N$, we have
\begin{align*}
m\big( B(x,r) \big)
&= \sum_{l=1}^{\infty} m\big( B(x,t_L^{l-1}r) \setminus B(x,t_L^l r) \big) \\
&\ge \bigg\{ \sum_{l=1}^{\infty} t_L^{l-1}
 \frac{\inf_{d \in [t_Lr,r]}h(t_L^{l-1}d)}{\sup_{d \in [t_Lr,r]}h(d)} \bigg\}
 m\big( B(x,r) \setminus B(x,t_L r) \big).
\end{align*}
We similarly deduce from $(\ref{eq:CDBG})$ with $r_1=t_L^{l-L}r$,
$r_2=t_L^{l-1-L}r$ and $t=t_L^{L-l+1}$ for $l=1,\ldots,L$ that
\begin{align*}
&m\big( B(x,r) \setminus B(x,t_L r) \big)
 \sum_{l=1}^L t_L^{l-1} \sup_{d \in [t_L^{l-L}r,t_L^{l-1-L}r]}h(d) \\
&\ge t_L^L \inf_{d \in [t_L r,r]}h(d)
 \sum_{l=1}^L m\big( B(x,t_L^{l-1-L}r) \setminus B(x,t_L^{l-L}r) \big) \\
&= \frac{r}{R} \inf_{d \in [t_L r,r]}h(d) \cdot m\big( B(x,R) \setminus B(x,r) \big).
\end{align*}
Combining these, we obtain
\begin{align*}
&m\big( B(x,r) \big) \cdot
 \sum_{l=1}^L (t_L^{l-1}-t_L^l)R \sup_{d \in [t_L R,R]}h(t_L^{l-1}d) \\
&= (1-t_L)R \cdot m\big( B(x,r) \big) \cdot
 \sum_{l=1}^L t_L^{l-1} \sup_{d \in [t_L^{l-L}r,t_L^{l-1-L}r]}h(d) \\
&\ge (1-t_L)R \cdot \bigg\{ \sum_{l=1}^{\infty} t_L^{l-1}
 \frac{\inf_{d \in [t_L r,r]}h(t_L^{l-1}d)}{\sup_{d \in [t_L r,r]}h(d)} \bigg\}
 \cdot \frac{r}{R} \inf_{d \in [t_L r,r]}h(d) \cdot m\big( B(x,R) \setminus B(x,r) \big) \\
&\ge m\big( B(x,R) \setminus B(x,r) \big) \cdot
 \frac{\inf_{d \in [t_L r,r]}h(d)}{\sup_{d \in [t_L r,r]}h(d)}
 \sum_{l=1}^{\infty} (t_L^{l-1}-t_L^l)r \inf_{d \in [t_L r,r]}h(t_L^{l-1}d).
\end{align*}
Letting $L$ diverge to infinity shows
\begin{equation}\label{eq:gBG}
m\big( B(x,r) \big) \int_r^R \bs_{K,N}(t)^{N-1} \,dt
 \ge m\big( B(x,R) \setminus B(x,r) \big) \int_0^r \bs_{K,N}(t)^{N-1} \,dt.
\end{equation}
This corresponds to $(\ref{eq:BG-})$ in the Riemannian case,
and the same calculation as the last step of the proof of Theorem~\ref{th:BG}
completes the proof.
$\qedd$
\end{proof}

Theorem~\ref{th:CDBG} shows that the doubling constant
$\sup_{x \in X,\, r \le R} m(B(x,2r))/m(B(x,r))$ is bounded
for each $R \in (0,\infty)$, therefore $X$ is proper
(see also the paragraph following Theorem~\ref{th:pre}).

\begin{corollary}\label{cr:prp}
Assume that $(X,d,m)$ satisfies $\CD(K,N)$ for some $K \in \R$ and
$N \in (1,\infty)$.
Then $(X,d)$ is proper.
\end{corollary}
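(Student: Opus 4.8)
The plan is to deduce from $\CD(K,N)$ a local volume--doubling estimate at small scales, use it to show that small closed balls are totally bounded, and then invoke the Hopf--Rinow theorem recalled in Section~\ref{sc:not} to upgrade local compactness to properness. It is worth stressing at the outset that properness of a complete geodesic space is a local phenomenon, so it suffices to control balls of small radius; this is precisely what makes the restriction $R\le\pi\sqrt{(N-1)/K}$ appearing in Theorem~\ref{th:CDBG} when $K>0$ harmless here.

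\emph{Step 1 (local doubling).} Fix $r_0\in(0,\infty)$, with the additional requirement $r_0<\pi\sqrt{(N-1)/K}/2$ in the case $K>0$, so that all radii occurring below lie in the domain of $\bs_{K,N}$. Since $0<m(B(x,s))<\infty$ for every $x\in X$ and $s\in(0,\infty)$ by the standing assumption on metric measure spaces, Theorem~\ref{th:CDBG} gives
\[ \frac{m(B(x,2r))}{m(B(x,r))} \le
 \frac{\int_0^{2r}\bs_{K,N}(t)^{N-1}\,dt}{\int_0^r\bs_{K,N}(t)^{N-1}\,dt} =: \delta(r) \]
for all $x\in X$ and $0<r\le r_0$. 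The function $\delta$ is continuous on $(0,r_0]$ and $\delta(r)\to 2^N$ as $r\downarrow 0$ (because $\bs_{K,N}(t)\sim t$ near the origin), hence $C:=\sup_{0<r\le r_0}\delta(r)<\infty$, and $m(B(x,2r))\le C\,m(B(x,r))$ for all $x\in X$ and all $0<r\le r_0$.

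\emph{Step 2 (balls are totally bounded).} Fix $x_0\in X$ and $r>0$ with $4r\le r_0$, and fix $\ve\in(0,r)$. Let $\{x_1,\dots,x_M\}\subset B(x_0,r)$ be a maximal $\ve$-separated subset. The balls $B(x_i,\ve/2)$ are pairwise disjoint and contained in $B(x_0,2r)$, while $B(x_0,2r)\subset B(x_i,3r)$ for each $i$. Iterating the doubling inequality of Step~1 a number $k$ of times with $2^k(\ve/2)\ge 3r$ (all intermediate radii remaining $\le r_0$ by the choice $4r\le r_0$) gives $m(B(x_i,\ve/2))\ge C^{-k}\,m(B(x_i,3r))\ge C^{-k}\,m(B(x_0,2r))$. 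Summing over the disjoint pieces,
\[ m(B(x_0,2r)) \ge \sum_{i=1}^M m(B(x_i,\ve/2)) \ge M\,C^{-k}\,m(B(x_0,2r)), \]
and since $0<m(B(x_0,2r))<\infty$ we conclude $M\le C^k<\infty$. As $\ve\in(0,r)$ was arbitrary, $B(x_0,r)$ is totally bounded, hence so is $\overline{B(x_0,r)}$; by completeness of $(X,d)$, the set $\overline{B(x_0,r)}$ is compact. Thus $(X,d)$ is locally compact.

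\emph{Step 3 (conclusion).} By definition $(X,d)$ is a complete, separable geodesic space, and Step~2 shows it is locally compact; the Hopf--Rinow theorem (cf.\ \cite[Theorem~2.4]{Bal}, recalled in Section~\ref{sc:not}) then yields that $(X,d)$ is proper. The only delicate point in the whole argument is the bookkeeping in Step~2 — keeping every radius used in the iterated doubling within the range where Theorem~\ref{th:CDBG} is available when $K>0$ — and this is dealt with simply by restricting to small scales; everything else is the classical proof that a complete, locally doubling metric measure space is proper.
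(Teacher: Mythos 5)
Your proof is correct and follows essentially the same route as the paper: the paper's one-sentence argument is exactly ``Theorem~\ref{th:CDBG} bounds the doubling constant, therefore $X$ is proper,'' and you have simply expanded the standard chain (doubling $\Rightarrow$ total boundedness of small balls $\Rightarrow$ local compactness $\Rightarrow$ properness via Hopf--Rinow) with the bookkeeping worked out. The one small improvement in your write-up is the restriction to radii below $r_0<\pi\sqrt{(N-1)/K}/2$ when $K>0$, which avoids having to invoke the Bonnet--Myers bound or finiteness of $m(X)$ to justify the paper's claim that the doubling constant is finite for \emph{every} $R$.
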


Next we generalize the Bonnet-Myers diameter bound (Corollary~\ref{cr:BG}).
We remark that the proof below uses Theorem~\ref{th:gBM}(i) only for pairs of
a point $A=\{ x \}$ and a set $B \subset B(x,\pi\sqrt{(N-1)/K})$,
so that it is consistent with Remark~\ref{rm:CD}(b).
The following proof is due to \cite{Omcp}.

\begin{theorem}[Generalized Bonnet-Myers diameter bound]\label{th:BMy}
Suppose that a metric measure space $(X,d,m)$ satisfies $\CD(K,N)$
with $K>0$ and $N \in (1,\infty)$.
Then we have the following$:$

\begin{enumerate}[{\rm (i)}]
\item
It holds that $\diam X \le \pi\sqrt{(N-1)/K}$.

\item
Each $x \in X$ has at most one point of distance $\pi\sqrt{(N-1)/K}$ from $x$.
\end{enumerate}
\end{theorem}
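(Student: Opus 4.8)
The plan is to prove both parts by contradiction, each time testing the curvature--dimension condition against a pair of probability measures concentrated near the relevant points and exploiting that $\beta^t_{K,N}(r)=\bigl(\bs_{K,N}(tr)/(t\,\bs_{K,N}(r))\bigr)^{N-1}\to+\infty$ as $r\uparrow L:=\pi\sqrt{(N-1)/K}$, since $\bs_{K,N}(L)=0$ while $\bs_{K,N}(tL)>0$ for $t\in(0,1)$. I will freely use $0<m(B(x,r))<\infty$ and, where needed, that singletons are $m$-null (Corollary~\ref{cr:atom}).

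For (i), suppose $\diam X>L$ and pick $x,o\in X$ with $D:=d(x,o)>L$. Fix $\delta\in(0,(D-L)/2)$ and set $\mu_0:=\chi_{B(x,\delta)}m/m(B(x,\delta))$ and $\mu_1:=\chi_{B(o,\delta)}m/m(B(o,\delta))$, both in $\cP_c^{\ac}(X,m)$. Since $d(x',y')\ge D-2\delta>L$ for all $x'\in B(x,\delta)$ and $y'\in B(o,\delta)$, any optimal coupling $\pi$ of $\mu_0,\mu_1$ is concentrated on $\{d>L\}$. Feeding $\mu_0,\mu_1$ and $U(r)=Nr(1-r^{-1/N})\in\DC_N$ into $\CD(K,N)$ --- that is, into $(\ref{eq:Uac})$, which for this $U$ is exactly $(\ref{eq:CDN})$ --- every factor $\beta^{1-t}_{K,N}(d(x',y'))^{1/N}$ and $\beta^t_{K,N}(d(x',y'))^{1/N}$ on the right-hand side equals $+\infty$, multiplied by a strictly positive quantity on a set of positive $\pi$-measure; hence the right-hand side is $-\infty$. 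But the left-hand side, $U_m(\mu_t)=N-N\int_X\rho_t^{1-1/N}\,dm$ for the minimal geodesic $\mu_t=(e_t)_\sharp\Pi$ (with $\rho_t$ the density of its absolutely continuous part), is a finite real number, because $\supp\mu_t\subset Z_t(\overline{B(x,\delta)},\overline{B(o,\delta)})$ is bounded, hence $m$-finite, so $\int_X\rho_t^{1-1/N}\,dm\le m(\supp\mu_t)^{1/N}<\infty$ by H\"older's inequality. This contradiction gives $\diam X\le L$. (Equivalently, apply Theorem~\ref{th:gBM}(i) to $A=B(x,\delta)$, $B=B(o,\delta)$ with the convention $\beta^t_{K,N}\equiv+\infty$ on $[L,\infty)$: the right-hand side is $+\infty$, so the bounded set $Z_t(A,B)$ would be forced to have infinite measure.)

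For (ii), suppose some $x$ has two distinct points $y_1\ne y_2$ at distance $L$. By (i), $\diam X=L$; pick $\delta\in(0,d(y_1,y_2)/2)$ so that $B(y_1,\delta)$ and $B(y_2,\delta)$ are disjoint and each is contained in $\overline{B(x,L)}$. The plan is to run the contraction of (i) right at the critical radius: apply Theorem~\ref{th:gBM}(i) with $A=\{x\}$ (of $m$-measure zero) and $B=\overline{B(y_1,\delta)}\cup\overline{B(y_2,\delta)}\subset\overline{B(x,L)}$ --- in accordance with Remark~\ref{rm:CD}(b) --- and also to the single balls $B=\overline{B(y_i,\delta)}$. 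Using $\inf_{y\in B}\beta^t_{K,N}(d(x,y))=\beta^t_{K,N}\bigl(\inf_{y\in B}d(x,y)\bigr)\ge\beta^t_{K,N}(L-\delta)$ (monotonicity of $\beta^t_{K,N}$ in its argument), the Bishop--Gromov lower bound $m(B(y_i,\delta))\ge c\,\delta^N$ for a positive constant $c$ and the Bishop--Gromov upper bound on the thin shell $Z_t(\{x\},B)\subset\{\,d(x,\cdot)\in[t(L-\delta),tL]\,\}$ (both from Theorem~\ref{th:CDBG}), one compares, as $\delta\downarrow0$ and then $t\uparrow1$, the resulting lower bound on $m(Z_t(\{x\},B))$ with the volume available there. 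The heuristic is that a genuinely two-component target $B$ forces the two families of minimal geodesics emanating from $x$, one aimed at $y_1$ and one at $y_2$, to cover that vanishingly thin shell twice over, which is incompatible with the estimate unless the two families agree on each such shell; pushing $t\to1$ then yields $y_1=y_2$. The crude inequalities just sketched are not by themselves sharp enough, and I expect the real obstacle to be exactly this rigidity step: one must use equality, not merely inequality, in the contraction estimate of Theorem~\ref{th:gBM}(i) for this extremal configuration --- i.e.\ that the contraction toward $x$ of the extremal shell is volume-exact with factor $t^N\beta^t_{K,N}$ --- which is where the curvature bound genuinely enters.
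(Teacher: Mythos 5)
Your approach to part (i) has a real gap, and your part (ii) is not a proof.

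\textbf{Part (i).} Your argument hinges on evaluating $\beta^t_{K,N}(d(x',y'))$ at distances $d(x',y')>L:=\pi\sqrt{(N-1)/K}$ and declaring the result $+\infty$. But for $K>0$, $N<\infty$ the function $\bs_{K,N}$ is only defined on $(0,L)$ (see $(\ref{eq:bs})$); past $L$ the formula gives zero or a negative value, and a fractional power of a negative number is not defined, so $\beta^t_{K,N}$ has no natural value beyond $L$. You explicitly invoke a ``convention $\beta^t_{K,N}\equiv+\infty$ on $[L,\infty)$,'' but this is not a convention the paper adopts; Remark~\ref{rm:CD}(b) makes clear that the definition of $\CD(K,N)$ with $K>0$, $N<\infty$ formally presupposes $m(X\setminus B(x,L))=0$ so as to stay in the domain of $\beta^t_{K,N}$ --- the very statement this theorem is meant to establish. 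Your argument therefore either uses an undefined object or reasons circularly. The paper avoids the problem entirely: it applies Theorem~\ref{th:gBM}(i) only to $A=\{\gamma(\delta+\ve)\}$ (a point on a minimal geodesic) and $B=B(y,\ve)$, a pair for which every relevant distance lies in $(\pi-2\ve,\pi)$, strictly inside the domain. Letting $\ve\downarrow 0$ makes $\beta^t$ blow up while $Z_t(\{\gamma(\delta+\ve)\},B(y,\ve))$ is forced into the thin annulus $B(x,\pi)\setminus B(x,\pi-2\ve)$, whose measure is controlled by the Bishop--Gromov inequality $(\ref{eq:gBMy})$; the ratio $m(B(x,\ve))/m(B(y,\ve))$ is driven to $\infty$, and symmetry in $x\leftrightarrow y$ gives the contradiction. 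Note the explicit remark preceding the theorem: the proof uses $\CD(K,N)$ only for $A=\{x\}$ and $B\subset B(x,L)$, precisely to be consistent with Remark~\ref{rm:CD}(b).

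\textbf{Part (ii).} You do not give a proof here, and you say so yourself (``The crude inequalities just sketched are not by themselves sharp enough''). The rigidity heuristic about volume-exact contractions is not what is needed. The paper's actual argument is short and elementary and does not invoke any equality/rigidity analysis. First one shows $m(S(x,\pi))=0$ by covering $S(x,2\ve)$ by balls of radius $2\ve$ and using the annulus estimate $(\ref{eq:gBMy})$ around each center. Next, if $d(x,y)=d(x,z)=\pi$ with $y\ne z$, the inclusion $B(y,r)\subset X\setminus B(x,\pi-r)$ together with $m(S(x,\pi))=0$ gives $m(B(y,r))\le m(B(x,\pi)\setminus B(x,\pi-r))\le m(B(x,r))$; symmetry yields $m(B(x,r))=m(B(y,r))=m(B(z,r))=m(B(x,\pi)\setminus B(x,\pi-r))$. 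Finally, for $\ve<d(y,z)/2$ the balls $B(y,\ve)$ and $B(z,\ve)$ are disjoint subsets of $B(x,\pi)\setminus B(x,\pi-\ve)$, whence $2m(B(x,\ve))\le m(B(x,\ve))$, a contradiction. Your plan to contract from $A=\{x\}$ onto shells and argue a double cover does not readily convert into inequalities, and the crucial ingredients that make the paper's argument work --- the vanishing of $m(S(x,\pi))$ and the exact ball-measure identities $m(B(x,r))=m(B(y,r))=m(B(z,r))$ --- are absent from your sketch.
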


\begin{proof}
It is enough to consider the case $K=N-1$ thanks to the
scaling property Remark~\ref{rm:CD}(a).

(i) Suppose that there is a pair of points $x,y \in X$ with $d(x,y)>\pi$,
set $\delta:=d(x,y)-\pi>0$ and take a minimal geodesic $\gamma:[0,\pi+\delta] \lra X$
from $x$ to $y$.
Choosing a different point on $\gamma$ if necessary,
we can assume $\delta<\pi/2$.
For $\ve \in (0,\delta)$, we apply Theorem~\ref{th:gBM}(i) between
$\{ \gamma(\delta+\ve) \}$ and $B(y,\ve)$ with $t=(\pi-\delta-\ve)/\pi$
and obtain
\[ \frac{m(Z_t(\{\gamma(\delta+\ve)\},B(y,\ve)))}{m(B(y,\ve))}
 \ge t^N \inf_{r \in (\pi-2\ve,\pi)} \bigg( \frac{\sin(tr)}{t\sin r} \bigg)^{N-1}
 =t \bigg( \frac{\sin(t(\pi-2\ve))}{\sin(\pi-2\ve)} \bigg)^{N-1}. \]
Then it follows from $t(\pi-2\ve) \le t\pi=\pi-\delta-\ve$ that
\begin{equation}\label{eq:BMi}
\frac{m(Z_t(\{\gamma(\delta+\ve)\},B(y,\ve)))}{m(B(y,\ve))}
 \ge \frac{\pi-\delta-\ve}{\pi} \bigg( \frac{\sin(\delta+\ve)}{\sin 2\ve} \bigg)^{N-1}
 \to \infty
\end{equation}
as $\ve$ tends to zero.
Given $z \in B(y,\ve)$, we take a minimal geodesic $\eta:[0,1] \lra X$
from $\gamma(\delta+\ve)$ to $z$ (see Figure~9),
and derive from the triangle inequality that
\[ d\big( \gamma(\delta+\ve),\eta(t) \big) =td\big( \gamma(\delta+\ve),z \big)
 <t\big\{ d\big( \gamma(\delta+\ve),y \big)+\ve \big\}
 =\pi-\delta-\ve. \]
Moreover, we deduce from $d(\gamma(\delta+\ve),z)<\pi$ that
\begin{align*}
d\big( x,\eta(t) \big) &\ge d(x,z)-d\big( z,\eta(t) \big)
 >\pi+\delta-\ve -(1-t)d\big( \gamma(\delta+\ve),z \big) \\
&>\pi+\delta-\ve -(1-t)\pi =\pi-2\ve.
\end{align*}
Thus we have
\begin{align*}
Z_t\big( \{\gamma(\delta+\ve)\},B(y,\ve) \big)
&\subset B\big( \gamma(\delta+\ve),\pi-\delta-\ve \big)
 \setminus B(x,\pi-2\ve) \\
&\subset B(x,\pi) \setminus B(x,\pi-2\ve).
\end{align*}
Combining this with $(\ref{eq:BMi})$, we conclude
\[ \lim_{\ve \downarrow 0}
 \frac{m(B(x,\pi) \setminus B(x,\pi-2\ve))}{m(B(y,\ve))}
 =\infty. \]

%%%%%%%%%%%%%%%%%
\setlength\unitlength{1pt}
\begin{center}
\begin{picture}(400,180)
\put(180,10){Figure~9}

\thicklines
\put(350,100){\shade\circle{40}}
\put(50,100){\line(1,0){300}}
\qbezier(100,100)(200,95)(350,90)
\qbezier(290,50)(305,100)(290,150)
\qbezier(315,35)(337,100)(315,165)

\put(38,98){$x$}
\put(80,110){$\gamma(\delta+\ve)$}
\put(220,110){$\gamma$}
\put(220,85){$\eta$}
\put(225,45){$S(x,\pi-2\ve)$}
\put(280,25){$S(\gamma(\delta+\ve),\pi-\delta-\ve)$}
\put(333,128){$B(y,\ve)$}
\put(352,85){$z$}
\put(303,78){$\eta(t)$}

\put(49,99){\rule{2pt}{2pt}}
\put(99,99){\rule{2pt}{2pt}}
\put(315,90){\rule{2pt}{2pt}}
\put(349,89){\rule{2pt}{2pt}}
\end{picture}
\end{center}
%%%%%%%%%%%%%%%%%

Furthermore, $(\ref{eq:gBG})$ and Theorem~\ref{th:CDBG} show that
\begin{align*}
&m\big( B(x,\pi) \setminus B(x,\pi-2\ve) \big)
 \le \frac{\int_{\pi-2\ve}^{\pi} \sin^{N-1}r \,dr}{\int_0^{\pi-2\ve} \sin^{N-1}r \,dr}
 m\big( B(x,\pi-2\ve) \big) \\
&=\frac{\int_0^{2\ve} \sin^{N-1}r \,dr}{\int_0^{\pi-2\ve} \sin^{N-1}r \,dr}
 m\big( B(x,\pi-2\ve) \big)
 \le m\big( B(x,2\ve) \big).
\end{align*}
Hence we have, again due to Theorem~\ref{th:CDBG} (with $K=0$),
\begin{equation}\label{eq:gBMy}
m\big( B(x,\pi) \setminus B(x,\pi-2\ve) \big) \le m\big( B(x,2\ve) \big)
 \le 2^N m\big( B(x,\ve) \big).
\end{equation}
Therefore we obtain $\lim_{\ve \downarrow 0}m(B(x,\ve))/m(B(y,\ve))=\infty$.
This is a contradiction because we can exchange the roles of $x$ and $y$.

(ii) We first see that $m(S(x,\pi))=0$, where $S(x,\pi):=\{ y \in X \,|\, d(x,y)=\pi \}$.
Given small $\ve>0$, take $\{ x_i \}_{i=1}^k \subset S(x,2\ve)$
such that $S(x,2\ve) \subset \bigcup_{i=1}^k B(x_i,2\ve)$
and $d(x_i,x_j) \ge 2\ve$ holds if $i \neq j$.
Then, for any $y \in S(x,\pi)$, there is some $x_i$ so that
$d(y,x_i)<(\pi-2\ve)+2\ve=\pi$, while $d(y,x_i) \ge \pi-2\ve$
holds in general.
Thus we have
\begin{align*}
m\big( S(x,\pi) \big)
&\le m\bigg( \bigcup_{i=1}^k B(x_i,\pi) \setminus B(x_i,\pi-2\ve) \bigg) \\
&\le \sum_{i=1}^k m\big( B(x_i,\pi) \setminus B(x_i,\pi-2\ve) \big).
\end{align*}
Then it follows from $(\ref{eq:gBMy})$ that
\begin{align*}
m\big( S(x,\pi) \big) &\le \sum_{i=1}^k m\big( B(x_i,2\ve) \big)
 \le 2^N \sum_{i=1}^k m\big( B(x_i,\ve) \big) \\
&=2^N m\bigg( \bigcup_{i=1}^k B(x_i,\ve) \bigg)
 \le 2^N m\big( B(x,3\ve) \big).
\end{align*}
Letting $\ve$ go to zero shows $m(S(x,\pi))=0$.

Now we suppose that there are two mutually distinct points
$y,z \in X$ such that $d(x,y)=d(x,z)=\pi$.
On the one hand, we derive from $(\ref{eq:gBMy})$ that
\[ m\big( B(x,r) \big) \ge m(B(x,\pi) \setminus B\big( x,\pi-r) \big) \]
for $r \in (0,\pi/2)$.
On the other hand, as $B(y,r) \subset X \setminus B(x,\pi-r)$ and
$m(S(x,\pi))=0$, we find
\[ m\big( B(y,r) \big) \le m\big( B(x,\pi) \setminus B(x,\pi-r) \big). \]
Hence we obtain $m(B(x,r)) \ge m(B(y,r))$ and
similarly $m(B(y,r)) \ge m(B(x,r))$.
This implies
\[ m\big( B(x,r) \big) =m\big( B(y,r) \big) =m\big( B(z,r) \big)
 =m\big( B(x,\pi) \setminus B(x,\pi-r) \big). \]
Then we have, for $\ve<d(y,z)/2$,
\begin{align*}
2m\big( B(x,\ve) \big)
&=m\big( B(y,\ve) \big) +m\big( B(z,\ve) \big) =m\big( B(y,\ve) \cup B(z,\ve) \big) \\
&\le m\big( B(x,\pi) \setminus B(x,\pi-\ve) \big) =m\big( B(x,\ve) \big).
\end{align*}
This is obviously a contradiction.
$\qedd$
\end{proof}

For $(X,d,m)$ satisfying $\CD(K,\infty)$ with $K>0$,
though $X$ is not necessarily bounded (see Example~\ref{ex:wRic}),
we can verify that $m(X)$ is finite (\cite[Theorem~4.26]{StI}).

\subsection{Maximal diameter}%%%%%%%

In Riemannian geometry, it is well known that the maximal diameter
$\pi$ among (unweighted) Riemannian manifolds of Ricci curvature $\ge n-1$
is achieved only by the unit sphere $\Sph^n$.
In our case, however, orbifolds $\Sph^n/\Gamma$ can also have
the maximal diameter.
Hence what we can expect is a decomposition into a spherical suspension
(in some sense).
Due to the scaling property (Remark~\ref{rm:CD}(a)),
we consider only the case of $K=N-1>0$.
See \cite{Omcp2} for more precise discussion of the following theorem,
and Subsection~\ref{ssc:nb} for the definition of the non-branching property.

\begin{theorem}\label{th:sph}
Assume that $(X,d,m)$ is non-branching and satisfies $\CD(N-1,N)$
for some $N \in (1,\infty)$ as well as $\diam X=\pi$.
Then $(X,m)$ is the spherical suspension of some topological measure space.
\end{theorem}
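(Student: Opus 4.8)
The plan is to mimic the classical maximal--diameter (Cheng) theorem, replacing Riemannian rigidity computations by the equality cases of the generalized Bishop--Gromov (Theorem~\ref{th:CDBG}) and Brunn--Minkowski (Theorem~\ref{th:gBM}) inequalities, and using the non-branching hypothesis (Subsection~\ref{ssc:nb}) to organize the foliation by minimal geodesics joining the two ``poles''. Since the statement already takes $K=N-1$, no rescaling is needed. By Theorem~\ref{th:BMy}(i) the assumption $\diam X=\pi$ is precisely the equality case, so I fix $p,q\in X$ with $d(p,q)=\pi$; by Theorem~\ref{th:BMy}(ii) each of $p,q$ is the only point at distance $\pi$ from the other, and the computation in the proof of Theorem~\ref{th:BMy}(ii) moreover gives $m(S(p,\pi))=m(S(q,\pi))=0$, hence $m(B(p,\pi))=m(B(q,\pi))=m(X)$.

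First I would determine the radial profile. Writing $F(r):=m(B(p,r))$, the Bishop--Gromov estimate $m(B(p,\pi)\setminus B(p,\pi-r))\le m(B(p,r))$ --- which is $(\ref{eq:gBG})$ together with Theorem~\ref{th:CDBG} --- combined with the trivial disjointness bound $m(B(q,r))+m(B(p,\pi-r))\le m(X)$ and the two analogues obtained by swapping $p$ and $q$ forces all of these to be equalities; in particular $F(r)+F(\pi-r)=m(X)$ (first for $r\le\pi/2$, hence for all $r\in(0,\pi)$ by the symmetry of the relation). Plugging this into the Bishop--Gromov monotonicity of $r\mapsto F(r)/\int_0^r(\sin t)^{N-1}\,dt$ gives $F(r)\ge c\int_0^r(\sin t)^{N-1}\,dt$ with $c:=m(X)/\int_0^\pi(\sin t)^{N-1}\,dt$; adding the same bound for $\pi-r$ and using $F(r)+F(\pi-r)=m(X)$ shows this is an equality, so $m(B(p,r))=c\int_0^r(\sin t)^{N-1}\,dt$ for every $r$, and likewise around $q$. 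Next, for $r\in(0,\pi)$ the balls $B(p,r)$ and $B(q,\pi-r)$ are disjoint with total mass $F(r)+F(\pi-r)=m(X)$, so $N_r:=\{x:d(p,x)\ge r,\ d(q,x)\ge\pi-r\}$ is $m$-null; if $d(p,x)+d(q,x)>\pi$, the non-degenerate interval $[\pi-d(q,x),\,d(p,x)]$ contains a rational $r$ with $x\in N_r$, so $\{x:d(p,x)+d(q,x)>\pi\}\subset\bigcup_{r\in\mathbb{Q}\cap(0,\pi)}N_r$ is $m$-null. Thus $m$-a.e.\ point of $X$ lies on a unit-speed minimal geodesic $\gamma:[0,\pi]\to X$ with $\gamma(0)=p$, $\gamma(\pi)=q$, and since $\supp m=X$ and $X$ is proper (Corollary~\ref{cr:prp}), an Arzel\`{a}--Ascoli limiting argument upgrades this to: \emph{every} point of $X$ lies on such a geodesic.

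Let $\Gamma_{pq}$ be the (compact, since $X$ is proper) set of minimal geodesics from $p$ to $q$, with the uniform topology, and $\Psi(\gamma,s):=\gamma(s)$ on $[0,\pi]\times\Gamma_{pq}$, which is now continuous and surjective onto $X$. The non-branching hypothesis yields that two distinct geodesics in $\Gamma_{pq}$ meet only at $p$ and $q$: if $\gamma(s)=\gamma'(s)$ with $0<s<\pi$ and their initial arcs agreed, non-branching would give $\gamma=\gamma'$; if the initial arcs differed, splicing $\gamma|_{[0,s]}$ with $\gamma'|_{[s,\pi]}$ would produce a minimal geodesic contradicting non-branching at $s$. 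Hence $\Psi$ is injective on $(0,\pi)\times\Gamma_{pq}$, so it descends to a continuous bijection from $([0,\pi]\times\Gamma_{pq})/\!\sim$ (the two ends collapsed to $p$ and $q$) onto $X$; compactness of the source and the Hausdorff property of $X$ make it a homeomorphism, and $\gamma\mapsto\gamma(\pi/2)$ identifies $\Gamma_{pq}$ with the ``equator'' $Y:=S(p,\pi/2)$ as topological spaces. It remains to disintegrate $m$: for Borel $E\subset Y$ one reruns the rigidity of the previous paragraph \emph{localized to the cone} over $E$, applying Theorem~\ref{th:gBM}(i) between concentric truncated cones and taking the equality case, to conclude that $s\mapsto m(\{\gamma(s'):\gamma(\pi/2)\in E,\ s'<s\})$ is proportional to $\int_0^s(\sin t)^{N-1}\,dt$; setting $m_Y(E)$ equal to that proportionality constant divided by $\int_0^\pi(\sin t)^{N-1}\,dt$ (the cones handled as outer measures as in Remark~\ref{rm:mble}, which is harmless because $m$ is regular) defines a Borel measure on $Y$ with $(\Psi^{-1})_\sharp m=(\sin s)^{N-1}\,ds\otimes m_Y$. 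Together with the homeomorphism above, this exhibits $(X,m)$ as the spherical suspension of the topological measure space $(Y,m_Y)$.

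The step I expect to be the main obstacle is the last one: the truncated cones need not be Borel, and one must verify that the cone-by-cone equality cases assemble into a single product measure with exactly the density $(\sin s)^{N-1}$; both this and the injectivity of $\Psi$ genuinely use non-branching. This is exactly the delicate point for which \cite{Omcp2} is cited, and where I would follow its argument in detail; the earlier steps, by contrast, are essentially rigidity bookkeeping on top of Theorems~\ref{th:CDBG} and \ref{th:gBM} plus the proof of Theorem~\ref{th:BMy}.
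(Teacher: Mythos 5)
Your proposal is correct and follows essentially the same route as the paper's outline: fix poles at distance $\pi$, use the Bonnet--Myers equality case to obtain $m(B(p,r))+m(B(q,\pi-r))=m(X)$, show via non-branching that every interior point lies on a unique pole-to-pole minimal geodesic, pass to the set of such geodesics with the sup metric and the evaluation map $\Psi$, and disintegrate $m$ as a product of a measure on the geodesic space with the density $(\sin t)^{N-1}\,dt$. The extra bookkeeping you supply --- the explicit radial profile, the Arzel\`a--Ascoli upgrade from $m$-a.e.\ to every point, the identification of the geodesic space with the equator $S(p,\pi/2)$, and the cone-by-cone verification of the product structure --- is precisely what the paper's sketch elides by deferring to \cite{Omcp2}.
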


\begin{oproof}
Fix $x_N,x_S \in X$ with $d(x_N,x_S)=\pi$.
Then it follows from $(\ref{eq:gBMy})$ that
\[ m\big( B(x_N,r) \big) +m\big( B(x_S,\pi-r) \big) =m(X) \]
for all $r \in (0,\pi)$.
This together with the non-branching property shows that,
for any $z \in X \setminus \{x_N,x_S\}$, there exists a unique minimal geodesic
from $x_N$ to $x_S$ passing through $z$.

Now we introduce the set $Y$ consisting of unit speed minimal geodesics
from $x_N$ to $x_S$, and equip it with the distance
\[ d_Y(\gamma_1,\gamma_2)
 :=\sup_{0\le t \le \pi} d_X\big( \gamma_1(t),\gamma_2(t) \big). \]
We consider $SY:=(Y \times [0,\pi])/\sim$, where
$(\gamma_1,t_1) \sim (\gamma_2,t_2)$ holds if $t_1=t_2=0$ or $t_1=t_2=\pi$.
We equip $SY$ with the topology naturally induced from $d_Y$.
Then the map $\Psi:SY \ni (\gamma,t) \longmapsto \gamma(t) \in X$
is well-defined and continuous.
Define the mesures $\nu$ on $Y$ and $\omega$ on $SY$ by
\begin{align*}
\nu(W) &:=\bigg\{ \int_0^{\pi} \sin^{N-1}t \,dt \bigg\}^{-1}
 m\big( \Psi(W \times [0,\pi]) \big), \\
d\omega &:=d\nu \times (\sin^{N-1}t \,dt).
\end{align*}
Then one can prove that $(SY,\omega)$ is regarded as the spherical suspension
of $(Y,\nu)$ as topological measure spaces, and that $\Psi:(SY,\omega) \lra (X,m)$
is homeomorphic and measure-preserving.
We use the non-branching property for the continuity of $\Psi^{-1}$.
$\qedd$
\end{oproof}

\subsection{Open questions}\label{ssc:Q}%%%%%%

We close the section with a list of open questions.

(A) (Beyond Theorem~\ref{th:sph})
There is room for improvement of Theorem~\ref{th:sph}:
Is the non-branching property necessary?
Can one say anything about the relation between the distances of $SY$ and $X$?
Does $(Y,d_Y,\nu)$ satisfy $\CD(N-2,N-1)$?

If $X$ is an $n$-dimensional Alexandrov space of curvature $\ge 1$
with $\diam X=\pi$, then it is isometric to the spherical suspension
of some $(n-1)$-dimensional Alexandrov space of curvature $\ge 1$.
It is generally difficult to derive something about distance from the
curvature-dimension condition.
We also do not know any counterexample.

(B) (Extremal case of Lichnerowicz inequality)
Related to Theorem~\ref{th:BMy}, we know the following
(\cite[Theorem~5.34]{LVII}).

\begin{theorem}[Generalized Lichnerowicz inequality]\label{th:Lich}
Assume that $(X,d,m)$ satisfies $\CD(K,N)$ for some $K>0$ and $N \in (1,\infty)$.
Then we have
\begin{equation}\label{eq:Lich}
\int_X f^2 \,dm \le \frac{N-1}{KN} \int_X |\nabla^- f|^2 \,dm
\end{equation}
for any Lipschitz function $f:X \lra \R$ with $\int_X f \,dm=0$.
\end{theorem}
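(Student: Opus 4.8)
The plan is to test the curvature--dimension condition on small density perturbations of a normalisation of $m$ and to extract $(\ref{eq:Lich})$ from the second-order term of the expansion. Since $K>0$ and $N<\infty$, Theorem~\ref{th:BMy} and Corollary~\ref{cr:prp} make $X$ compact, so $m(X)<\infty$; as both sides of $(\ref{eq:Lich})$ and the constraint $\int_X f\,dm=0$ scale identically under $m\mapsto cm$, I may assume $m(X)=1$. A Lipschitz function on the compact space $X$ is bounded, say $|f|\le L$, with $|\nabla^- f|\le\mathrm{Lip}(f)$, so all integrals are finite; and if $\int_X f^2\,dm=0$ or $\int_X|\nabla^- f|^2\,dm=0$ then $f\equiv 0$ ($X$ is geodesic, hence connected, and $\supp m=X$) and there is nothing to prove. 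So I assume both are positive.

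For $0<\ve<1/L$ set $\mu^{\ve}_{\pm}:=(1\pm\ve f)\,m\in\cP_c^{\ac}(X,m)$ and apply $\CD(K,N)$ with $U(r)=Nr(1-r^{-1/N})$: by $(\ref{eq:Uac})$, which for this $U$ reduces to $(\ref{eq:CDN})$, there are a minimal geodesic $\alpha$ from $\mu^{\ve}_-$ to $\mu^{\ve}_+$ and an optimal coupling $\pi$ with, writing $\rho_\mp=1\mp\ve f$ for the densities,
\[
S_N(\alpha(t))\le-(1-t)\!\int_{X\times X}\!\!\beta^{1-t}_{K,N}(d(x,y))^{1/N}\rho_-(x)^{-1/N}\,d\pi-t\!\int_{X\times X}\!\!\beta^{t}_{K,N}(d(x,y))^{1/N}\rho_+(y)^{-1/N}\,d\pi ,
\]
which I use at $t=\tfrac12$. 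Two elementary inputs then do the work: $S_N(\alpha(\tfrac12))\ge -1$ by H\"older's inequality and $m(X)=1$; and the identity $\beta^{1/2}_{K,N}(r)=\cos(r/(2a))^{-(N-1)}$ with $a:=\sqrt{(N-1)/K}$, from which an elementary estimate (using $\tan\theta\ge\theta$) gives the \emph{global} lower bound $\beta^{1/2}_{K,N}(r)^{1/N}\ge 1+\tfrac{K}{8N}r^2$ on the whole admissible range $r\in[0,\pi a)$ --- the point being that this needs no control on how far $\pi$ moves mass. Feeding these in, together with $\rho_\mp^{-1/N}\ge 1-\tfrac{L}{N}\ve$ pointwise and the marginal identities $\int_X(1\mp\ve f)^{1-1/N}\,dm=1-\tfrac{N-1}{2N^2}\ve^2\int_X f^2\,dm+O(\ve^3)$ (the linear terms vanishing as $\int_X f\,dm=0$), the $t=\tfrac12$ inequality collapses, after cancelling the constant $-1$, to
\[
\frac{K}{8N}\Bigl(1-\frac{L}{N}\ve\Bigr)\,d^W_2(\mu^{\ve}_-,\mu^{\ve}_+)^2\ \le\ \frac{N-1}{2N^2}\,\ve^2\!\int_X f^2\,dm+O(\ve^3).
\]

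It remains to bound $d^W_2(\mu^{\ve}_-,\mu^{\ve}_+)^2$ from below. By Kantorovich duality for the cost $\tfrac12 d^2$ one has $\tfrac12 d^W_2(\mu,\nu)^2\ge\int_X Q_1h\,d\nu-\int_X h\,d\mu$ for every bounded Lipschitz $h$, where $Q_1h(y):=\inf_x\{h(x)+\tfrac12 d(x,y)^2\}$; taking $h=\ve\lambda f$ ($\lambda>0$) and using the infinitesimal Hopf--Lax expansion $Q_1(\ve\lambda f)=\ve\lambda f-\tfrac{\ve^2\lambda^2}{2}|\nabla^- f|^2+o(\ve^2)$ gives $d^W_2(\mu^{\ve}_-,\mu^{\ve}_+)^2\ge\ve^2\bigl(4\lambda\int_X f^2\,dm-\lambda^2\int_X|\nabla^- f|^2\,dm\bigr)+o(\ve^2)$, and optimising over $\lambda$ yields $d^W_2(\mu^{\ve}_-,\mu^{\ve}_+)^2\ge 4\ve^2\bigl(\int_X f^2\,dm\bigr)^2\big/\int_X|\nabla^- f|^2\,dm+o(\ve^2)$. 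Substituting this into the last displayed inequality, dividing by $\ve^2$ and letting $\ve\downarrow 0$ leaves exactly $KN\int_X f^2\,dm\le(N-1)\int_X|\nabla^- f|^2\,dm$, which is $(\ref{eq:Lich})$.

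The Taylor expansions, the elementary estimate for $\beta^{1/2}_{K,N}$, and the membership $U(r)=Nr(1-r^{-1/N})\in\DC_N$ are routine. The genuine obstacle is the final step: the sharp lower bound for $d^W_2(\mu^{\ve}_-,\mu^{\ve}_+)^2$, and in particular the justification of the infinitesimal Hopf--Lax expansion and the identification of its $\ve^2$-coefficient with $\int_X|\nabla^- h|^2\,dm$ in a purely metric space --- precisely the point where the descending slope $|\nabla^-\cdot|$ enters, which is supplied by the Hamilton--Jacobi/Wasserstein calculus on metric measure spaces (cf.\ the references for Theorem~\ref{th:CD} and \cite{LVII}).
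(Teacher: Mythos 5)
Your argument is correct and follows the route the paper points to: apply $\CD(K,N)$ with $U=S_N$ to $O(\ve)$ density perturbations of the normalised measure and read off the Poincar\'e inequality from the $\ve^2$-coefficient, with the convenient symmetric choice $(1\pm\ve f)m$ and evaluation at $t=\tfrac12$, where $\beta^{1/2}_{K,N}(r)=\cos\bigl(\tfrac{r}{2}\sqrt{K/(N-1)}\bigr)^{-(N-1)}$ gives the clean global lower bound you use, while the $S_N$ side is handled by H\"older and the duality bound on $d^W_2$ by Kantorovich with the Hopf--Lax transform. The Hopf--Lax asymptotics you single out as the crux are indeed the only non-routine ingredient, but they do hold for Lipschitz $f$ on the (compact, by Theorem~\ref{th:BMy} and Corollary~\ref{cr:prp}) space $X$: the incremental ratio $\bigl(\ve\lambda f-Q_1(\ve\lambda f)\bigr)/(\ve\lambda)^2$ is uniformly bounded by $\tfrac12\mathrm{Lip}(f)^2$ and satisfies $\limsup_{\ve\downarrow0}\le\tfrac12|\nabla^-f|^2$ pointwise (split the infimum defining $Q_1$ at a small radius and complete the square), so reverse Fatou delivers exactly the $L^1$ estimate your optimisation in $\lambda$ requires.
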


Here $|\nabla^- f|$ is the {\it generalized gradient} of $f$ defined by
\[ |\nabla^- f|(x):=\limsup_{y \to x}\frac{\max\{ f(x)-f(y),0 \}}{d(x,y)}. \]
The proof is done via careful calculation using $(\ref{eq:Uac})$ for $S_N$ between
$m(X)^{-1} \cdot m$ and its perturbation $(1+\ve f)m(X)^{-1} \cdot m$.
The inequality $(\ref{eq:Lich})$ means that the lowest positive eigenvalue of the Laplacian
is larger than or equal to $KN/(N-1)$.
The constant $(N-1)/KN$ in $(\ref{eq:Lich})$ is sharp.
Moreover, in Riemannian geometry, it is known that the best constant
with $N=\dim M$ is achieved only by spheres.

In our general setting, it is not known whether the best constant is achieved
only by spaces of maximal diameter $\pi\sqrt{(N-1)/K}$.
If so, then Theorem~\ref{th:sph} provides us a decomposition into
a spherical suspension (for non-branching spaces).

(C) (Splitting)
In Riemannian geometry, Cheeger and Gromoll's~\cite{CG} celebrated theorem
asserts that, if a complete Riemannian manifold of nonnegative Ricci curvature
admits an isometric embedding of the real line $\R \hookrightarrow M$,
then $M$ isometrically splits off $\R$, namely $M$ is isometric to a product space
$M' \times \R$, where $M'$ again has the nonnegative Ricci curvature.
We can repeat this procedure if $M'$ contains a line.
This is an extremely deep theorem, and its generalization
is a key tool of Cheeger and Colding's seminal work~\cite{CC}
(see Further Reading of Section~\ref{sc:stab}).

Kuwae and Shioya~\cite{KS3} consider (weighted) Alexandrov spaces
of curvature $\ge -1$ with nonnegative Ricci curvature in terms of
the measure contraction property (see Subsection~\ref{ssc:mcp}).
They show that, if such an Alexandrov space contains an isometric copy
of the real line, then it splits off $\R$ as topological measure spaces
(compare this with Theorem~\ref{th:sph}).
This is recently strengthened into an isometric splitting by \cite{ZZ}
under a slightly stronger notion of Ricci curvature bound in terms of Petrunin's
second variation formula (\cite{Pegfa}).

For general metric measure spaces satisfying $\CD(0,N)$,
the isometric splitting is false because $n$-dimensional Banach spaces
satisfy $\CD(0,n)$ (Theorem~\ref{th:FCD}) and do not split in general.
The homeomorphic, measure-preserving splitting could be true,
but it is open even for non-branching spaces.

(D) (L\'evy-Gromov isoperimetric inequality)
Another challenging problem is to show (some appropriate variant of)
the L\'evy-Gromov isoperimetric inequality using optimal transport.
Most known proofs in the Riemannian case appeal to the deep existence and regularity theory
of minimal surfaces which can not be expected in singular spaces.

For instance, let us consider the {\it isoperimetric profile} $I_M:(0,m(M)) \lra (0,\infty)$
of a weighted Riemannian manifold $(M,g,m)$ with $m=e^{-\psi}\vol_g$, i.e.,
$I_M(V)$ is the least perimeter of sets with volume $V$.
Bayle~\cite{Bay} shows that the differential inequality
\begin{equation}\label{eq:IM}
(I_M^{N/(N-1)})'' \le -\frac{KN}{N-1}I_M^{1/(N-1)-1}
\end{equation}
holds if $\Ric_N \ge K$, which immediately implies the corresponding
{\it L\'evy-Gromov isoperimetric inequality}
\[ \frac{I_M(t \cdot m(M))}{m(M)} \ge I_{K,N}(t) \]
for $t \in [0,1]$, where $I_{K,N}$ is the isoperimetric profile of the $N$-dimensional
space form of constant sectional curvature $K/(N-1)$ equipped with the normalized
measure (extended to non-integer $N$ numerically).
The concavity estimate $(\ref{eq:IM})$ seems to be related to the Brunn-Minkowski
inequality, however, Bayle's proof of $(\ref{eq:IM})$ is based on the variational
formulas of minimal surfaces (see also \cite[Chapter~18]{Mo}).
More analytic approach could work in metric measure spaces,
but we need a new idea for it.

\begin{fread}
The generalized Brunn-Minkowski inequality (Theorem~\ref{th:gBM})
is essentially contained in the proof of \cite[Theorem~1.1]{vRS} for $N=\infty$,
and due to Sturm \cite{StII} for $N<\infty$
(also the Brascamp-Lieb inequality in \cite{CMS1} implies it
in the unweighted Riemannian situation with $N=n$).
It is used as a key tool in the proof of the derivation of the Ricci curvature bound
from the curvature-dimension condition (see Theorem~\ref{th:CD}).
Some more related interpolation inequalities can be found in \cite{CMS1},
these all were new even for Riemannian manifolds.

The relation between $\CD(K,\infty)$ and functional inequalities such as
the Talagrand, logarithmic Sobolev and the global Poincar\'e inequalities
are studied by Otto and Villani \cite{OV} and Lott and Villani \cite[Section~6]{LVI}.
We refer to \cite{Ol}, \cite{BoS} for related work on discrete spaces
(see also Subsection~\ref{ssc:Frm}(B)), and to \cite{Stcon}, \cite[Chapter~25]{Vi2},
\cite{OT} for the relation between variants of these functional inequalities
and the displacement convexity of generalized entropies.
Theorems~\ref{th:BMy} and \ref{th:sph} are due to \cite{Omcp} and \cite{Omcp2},
where the proof is given in terms of the measure contraction property
(see Subsection~\ref{ssc:mcp}).
\end{fread}

\section{The curvature-dimension condition in Finsler geometry}\label{sc:Fin}%%%%

In this section, we demonstrate that almost everything so far works well
also in the Finsler setting.
In fact, the equivalence between $\Ric_N \ge K$ and $\CD(K,N)$ is extended
by introducing an appropriate notion of the weighted Ricci curvature.
Then we explain why this is significant and discuss two potential applications.
We refer to \cite{BCS} and \cite{Shlec} for the fundamentals of
Finsler geometry, and the main reference of the section is \cite{Oint}.

\subsection{A brief introduction to Finsler geometry}%%%%%%%%

Let $M$ be an $n$-dimensional connected $C^{\infty}$-manifold.
Given a local coordinate $(x^i)_{i=1}^n$ on an open set $U \subset M$,
we always consider the coordinate $(x^i,v^i)_{i=1}^n$ on $TU$ given by
\[ v=\sum_{i=1}^n v^i \frac{\del}{\del x^i}\Big|_x \in T_xM. \]

\begin{definition}[Finsler structures]\label{df:Fins}
A {\it $C^{\infty}$-Finsler structure} is a nonnegative function
$F:TM \lra [0,\infty)$ satisfying the following three conditions:
\begin{enumerate}[(1)]
\item (Regularity)
$F$ is $C^{\infty}$ on $TM \setminus 0$,
where $0$ stands for the zero section;

\item (Positive homogeneity)
$F(\lambda v)=\lambda F(v)$ holds for all $v \in TM$ and $\lambda \ge 0$;

\item (Strong convexity)
Given a local coordinate $(x^i)_{i=1}^n$ on $U \subset M$,
the $n \times n$ matrix
\begin{equation}\label{eq:gij}
\big( g_{ij}(v) \big)_{i,j=1}^n
 := \bigg( \frac{1}{2} \frac{\del^2(F^2)}{\del v^i \del v^j}(v) \bigg)_{i,j=1}^n
\end{equation}
is positive-definite for all $v \in T_xM \setminus 0$, $x \in U$.
\end{enumerate}
\end{definition}

In other words, each $F|_{T_xM}$ is a {\it $C^{\infty}$-Minkowski norm}
(see Example~\ref{ex:Fins}(a) below for the precise definition)
and it varies $C^{\infty}$-smoothly also in the horizontal direction.
We remark that the homogeneity $(2)$ is imposed only in the positive direction,
so that $F(-v) \neq F(v)$ is allowed.
The positive-definite symmetric matrix $(g_{ij}(v))_{i,j=1}^n$ in $(\ref{eq:gij})$
defines the Riemannian structure $g_v$ on $T_xM$ through
\begin{equation}\label{eq:gv}
g_v\bigg( \sum_{i=1}^n v_1^i \frac{\del}{\del x^i}\Big|_x,
 \sum_{j=1}^n v_2^j \frac{\del}{\del x^j}\Big|_x \bigg)
 :=\sum_{i,j=1}^n g_{ij}(v) v_1^i v_2^j.
\end{equation}
Note that $F(v)^2=g_v(v,v)$.
If $F$ is coming from a Riemannian structure, then $g_v$ always coincides with
the original Riemannian metric.
In general, the inner product $g_v$ is regarded as the best approximation
of $F$ in the direction $v$.
More precisely, the unit spheres of $F$ and $g_v$ are tangent to each other at
$v/F(v)$ up to the second order
(that is possible thanks to the strong convexity, see Figure~10).

%%%%%%%%%%%%%%%%%%%%%%
\begin{center}
\begin{picture}(400,200)
\put(180,10){Figure~10}

\put(200,30){\vector(0,1){160}}
\put(80,110){\vector(1,0){240}}

\qbezier(250,110)(245,80)(210,70)
\qbezier(210,70)(180,65)(140,80)
\qbezier(140,80)(110,90)(110,110)
\qbezier(110,110)(110,145)(130,160)
\qbezier(130,160)(180,195)(230,150)
\qbezier(230,150)(247,135)(250,110)

\thicklines
\put(200,110){\ellipse{100}{50}}
\put(200,110){\vector(1,0){50}}

\put(202,114){$v/F(v)$}
\put(140,140){$g_v(\cdot,\cdot)=1$}
\put(105,175){$F(\cdot)=1$}

\end{picture}
\end{center}
%%%%%%%%%%%%%%%%%%%%%%

The {\it distance} between $x,y \in M$ is naturally defined by
\[ d(x,y):=\inf\bigg\{ \int_0^1 F(\dot{\gamma}) \,dt \,\Big|\,
 \gamma:[0,1] \lra M,\ C^1,\ \gamma(0)=x,\ \gamma(1)=y \bigg\}. \]
One remark is that the nonsymmetry $d(x,y) \neq d(y,x)$ may come up
as $F$ is only positively homogeneous.
Thus it is not totally correct to call $d$ a distance, it might be called
cost or action as $F$ is a sort of Lagrangian cost function.
Another remark is that the function $d(x,\cdot)^2$ is $C^2$ at the origin $x$
if and only if $F|_{T_xM}$ is Riemannian.
Indeed, the squared norm $|\cdot|^2$ of a Banach (or Minkowski) space
$(\R^n,|\cdot|)$ is $C^2$ at $0$ if and only if it is an inner product.

A $C^{\infty}$-curve $\gamma:[0,l] \lra M$ is called a {\it geodesic}
if it has constant speed ($F(\dot{\gamma}) \equiv c \in [0,\infty)$) and is
locally minimizing (with respect to $d$).
The reverse curve $\bar{\gamma}(t):=\gamma(l-t)$ is not
necessarily a geodesic.
We say that $(M,F)$ is {\it forward complete} if any geodesic
$\gamma:[0,\ve] \lra M$ is extended to a geodesic
$\overline{\gamma}:[0,\infty) \lra M$.
Then any two points $x,y \in M$ are connected by a minimal geodesic
from $x$ to $y$.

\subsection{Weighted Ricci curvature and the curvature-dimension condition}%%%

We introduced distance and geodesics in a natural (metric geometric) way,
but the definition of curvature is more subtle.
The flag and Ricci curvatures on Finsler manifolds, corresponding to
the sectional and Ricci curvatures in Riemannian geometry,
are defined via some connection as in the Riemannian case.
The choice of connection is not unique in the Finsler setting,
nevertheless, all connections are known to give rise to the same curvature.
In these notes, however, we shall follow Shen's idea~\cite[Chapter~6]{Shlec}
of introducing the flag curvature using vector fields and corresponding
Riemannian structures (via $(\ref{eq:gv})$).
This intuitive description is not only geometrically understandable,
but also useful and inspiring.

Fix a unit vector $v \in T_xM \cap F^{-1}(1)$, and extend it to a
$C^{\infty}$-vector field $V$ on an open neighborhood $U$ of $x$
in such a way that every integral curve of $V$ is geodesic.
In particular, $V(\gamma(t))=\dot{\gamma}(t)$ along the geodesic
$\gamma:(-\ve,\ve) \lra M$ with $\dot{\gamma}(0)=v$.
Using $(\ref{eq:gv})$, we equip $U$ with the Riemannian structure $g_V$.
Then the {\it flag curvature} $\cK(v,w)$ of $v$ and a linearly independent
vector $w \in T_xM$ coincides with the sectional curvature with respect to $g_V$
of the $2$-plane $v \wedge w$ spanned by $v$ and $w$.
Similarly, the {\it Ricci curvature} $\Ric(v)$ of $v$ (with respect to $F$)
coincides with the Ricci curvature of $v$ with respect to $g_V$.
This contains the fact that $\cK(v,w)$ is independent of the choice
of the extension $V$ of $v$.
We remark that $\cK(v,w)$ depends not only on the {\it flag} $v \wedge w$,
but also on the choice of the {\it flagpole} $v$ in the flag $v \wedge w$.
In particular, $\cK(v,w) \neq \cK(w,v)$ may happen.

As for measure, on Finsler manifolds, there is no constructive measure
as good as the Riemannian volume measure.
Therefore, as the theory of weighted Riemannian manifolds,
we equip $(M,F)$ with an arbitrary positive $C^{\infty}$-measure $m$ on $M$.
Now, the weighted Ricci curvature is defined as follows (\cite{Oint}).
We extend given a unit vector $v \in T_xM$ to a $C^{\infty}$-vector field $V$ on
a neighborhood $U \ni x$ such that every integral curve is geodesic
(or it is sufficient to consider only the tangent vector field $\dot{\gamma}$
of the geodesic $\gamma:(-\ve,\ve) \lra M$ with $\dot{\gamma}(0)=v$),
and decompose $m$ as $m=e^{-\Psi(V)}\vol_{g_V}$ on $U$.
We remark that the weight $\Psi$ is not a function on $M$, but a function on
the unit tangent sphere bundle $SM \subset TM$.
For simplicity, we set
\begin{equation}\label{eq:Psi}
\del_v\Psi:=\frac{d(\Psi \circ \dot{\gamma})}{dt}(0), \qquad
 \del^2_v\Psi:=\frac{d^2(\Psi \circ \dot{\gamma})}{dt^2}(0).
\end{equation}

\begin{definition}[Weighted Ricci curvature of Finsler manifolds]\label{df:FRic}
For $N \in [n,\infty]$ and a unit vector $v \in T_xM$, we define
\begin{enumerate}[(1)]
\item $\Ric_n(v):=\displaystyle \left\{
 \begin{array}{ll} \Ric(v)+\del_v^2 \Psi & {\rm if}\ \del_v \Psi=0, \\
 -\infty & {\rm otherwise}; \end{array} \right.$

\item $\Ric_N(v):=\Ric(v) +\del_v^2 \Psi -\displaystyle\frac{(\del_v \Psi)^2}{N-n}$
for $N \in (n,\infty)$;

\item $\Ric_{\infty}(v):=\Ric(v) +\del_v^2 \Psi$.
\end{enumerate}
\end{definition}

In other words, $\Ric_N(v)$ of $F$ is $\Ric_N(v)$ of $g_V$
(recall Definition~\ref{df:wRic}), so that this curvature coincides with $\Ric_N$
in weighted Riemannian manifolds.
We remark that the quantity $\del_v\Psi$ coincides with Shen's $\bS$-curvature
(also called the {\it mean covariance} or {\it mean tangent curvature},
see \cite{Shvol}, \cite{Shlec}, \cite{Shsam}).
Therefore bounding $\Ric_n$ from below makes sense only when
the $\bS$-curvature vanishes everywhere.
This curvature enables us to extend Theorem~\ref{th:CD}
to the Finsler setting (\cite{Oint}).
Therefore all results in the theory of curvature-dimension condition
are applicable to general Finsler manifolds.

\begin{theorem}\label{th:FCD}
A forward complete Finsler manifold $(M,F,m)$ equipped with a positive
$C^{\infty}$-measure $m$ satisfies $\CD(K,N)$
for some $K \in \R$ and $N \in [n,\infty]$
if and only if $\Ric_N(v) \ge K$ holds for all unit vectors $v \in TM$.
\end{theorem}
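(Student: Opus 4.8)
The plan is to transplant the (outlined) proof of Theorem~\ref{th:CD} to the Finsler setting, exploiting the fact built into Definition~\ref{df:FRic} that $\Ric_N(v)$ of $(M,F,m)$ coincides with the weighted Ricci curvature $\Ric_N(v)$ (in the sense of Definition~\ref{df:wRic}) of the approximating Riemannian structure $g_V$ along the geodesic tangent to $v$, with density exponent the restriction of $\Psi$ to that geodesic. First I would invoke the Finsler analogue of the Brenier--McCann structure theorem: for $\mu_0=\rho_0 m\in\cP_c^{\ac}(M,m)$ and $\mu_1\in\cP_c(M)$ there is a $(d^2/2)$-convex function $f:M\lra\R$ such that the unique minimal geodesic in $(\cP_2(M),d^W_2)$ from $\mu_0$ to $\mu_1$ is $\mu_t=(\cF_t)_{\sharp}\mu_0$ with $\cF_t(x):=\exp_x[t\nabla f(x)]$, where $\nabla f$ is the Finsler gradient obtained from $df$ via the inverse of the Legendre transform of $F$; moreover $f$ is twice differentiable $\mu_0$-a.e., so that the Finsler Monge--Amp\`ere equation $\rho_0(x)=\rho_t(\cF_t(x))\,\bJ^{\psi}_t(x)$ holds, $\bJ^{\psi}_t$ denoting the Jacobian of $\cF_t$ with respect to $m$. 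Here the nonsymmetry of $F$ must be tracked throughout (the gradient is only smooth where $df\neq0$, the locus $\{df=0\}$ being handled separately), and forward completeness is exactly what guarantees that $t\longmapsto\exp_x[t\nabla f(x)]$ is a minimal geodesic on $[0,1]$. With this in hand, the direction $\Ric_N\ge K\Rightarrow\CD(K,N)$ reduces, precisely as in Remark~\ref{rm:CD}(f), to the pointwise Jacobian inequality $(\ref{eq:J})$ for $\bJ^{\psi}_t$; integrating it against functions $U\in\DC_N$ then yields $(\ref{eq:URic})$.

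The analytic core is therefore to derive $(\ref{eq:J})$ from $\Ric_N\ge K$. Fix $x$, put $\gamma(t)=\cF_t(x)$, extend $\dot\gamma$ to a geodesic vector field $V$ on a neighbourhood of $\gamma$, and pass to the Riemannian metric $g_V$: along $\gamma$ the relevant covariant derivatives and the Jacobi equation for $F$ and for $g_V$ agree, the variational vector fields of the transport are $g_V$-Jacobi fields, and the Riccati/Bishop computation of Theorem~\ref{th:BG} applied to $g_V$ produces the differential inequality for $(\det\cA)^{1/2(n-1)}$ controlled by the $g_V$-Ricci curvature of $\dot\gamma(t)$, which by definition is the Finsler $\Ric(\dot\gamma(t))$. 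The only new feature relative to the weighted Riemannian case of Theorem~\ref{th:CD} is that $m=e^{-\Psi(V)}\vol_{g_V}$ with $\Psi$ a function on $SM$; differentiating $\Psi$ once and twice along $\gamma$ produces exactly $\del_v\Psi$ and $\del^2_v\Psi$ of $(\ref{eq:Psi})$, and the same concavity-plus-H\"older manipulation as in Theorem~\ref{th:BMN} converts the hypothesis $\Ric_N(\dot\gamma(t))\ge K$ into the $N$-th root concavity encoded by $(\ref{eq:J})$. The case $N=\infty$ is the parallel computation with $\Ent_m$ in place of $S_N$, giving $(\ref{eq:CD})$.

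For the converse I would localize as in the proof of Theorem~\ref{th:BMN}: assuming $\CD(K,N)$, fix a geodesic $\gamma$ with $\dot\gamma(0)=v$, set $a=\del_v\Psi/(N-n)$ as the maximizer, and apply the generalized Brunn--Minkowski inequality (Theorem~\ref{th:gBM}) to the uniform distributions on the small forward balls $B^+(\gamma(\delta),\ve(1-a\delta))$ and $B^+(\gamma(-\delta),\ve(1+a\delta))$, whose midpoint set is $B^+(x,\ve)$ up to higher order; then compute the $\ve\downarrow0$ asymptotics of $m(B^+(\cdot,\cdot))$ from $m=e^{-\Psi(V)}\vol_{g_V}$ together with the second-order tangency of $F$ and $g_v$ (Figure~10), and read off $\Ric_N(v)\ge K$ from the $\delta^2$-coefficient of the Taylor expansion, exactly as the quantity $\Hess\psi(v,v)-\langle\nabla\psi(x),v\rangle^2/(N-n)$ emerged there. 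Since Finsler geodesics do not branch, no additional hypothesis is needed, and once $\CD(K,N)$ is established all downstream consequences of the curvature-dimension theory (Theorems~\ref{th:gBM}, \ref{th:CDBG}, \ref{th:BMy}, \ref{th:Lich}) transfer automatically.

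The main obstacle I expect is precisely the bookkeeping forced by the nonsymmetry of $F$ and the non-smoothness of $d(x,\cdot)^2$ at $x$: one must (i) establish the Finsler Brenier--McCann theorem and the a.e.\ twice differentiability of $(d^2/2)$-convex functions, treating $\{df=0\}$ separately; (ii) verify carefully that the curvature term arising from the second variation along the transport geodesic is exactly the Finsler $\Ric(\dot\gamma(t))$ (equivalently the $g_V$-Ricci curvature) and that the weight contribution is exactly $\del^2_v\Psi-(\del_v\Psi)^2/(N-n)$, matching Definition~\ref{df:FRic}; and (iii) in the converse, use genuine forward balls rather than any symmetrized version and check that, to the order needed, the expansion of $m(B^+(x,\ve))$ is insensitive to the direction-dependence of $\Psi$. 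Granting these technical points, the architecture of the proof is identical to the Riemannian one.
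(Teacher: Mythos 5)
Your proposal matches the route the paper itself indicates: the paper does not prove Theorem~\ref{th:FCD} in these notes but defers to \cite{Oint}, stating explicitly that the argument ``follows the same line as the Riemannian case,'' requiring extra care for the nonsymmetric distance and for the fact that $d(x,\cdot)^2$ is only $C^1$ at the origin — precisely the two obstacles you flag as (i) and (iii), together with establishing the Finsler Brenier--McCann theorem, passing to the osculating metric $g_V$ with $m=e^{-\Psi(V)}\vol_{g_V}$, deriving the Jacobian inequality $(\ref{eq:J})$, and localizing via the generalized Brunn--Minkowski inequality for the converse. Since your outline reproduces exactly this architecture and correctly identifies the Finsler-specific technical work, it is the same approach.
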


We remark that, in the above theorem, the curvature-dimension condition
is appropriately extended to nonsymmetric distances.
The proof of Theorem~\ref{th:FCD} follows the same line as the Riemannian case,
however, we should be careful about nonsymmetric distance
and need some more extra discussion due to the fact that the squared
distance function $d(x,\cdot)^2$ is only $C^1$ at $x$.

We present several examples of Finsler manifolds.
The flag and Ricci curvatures are calculated in a number of situations,
while the weighted Ricci curvature is still relatively much less investigated.

\begin{example}\label{ex:Fins}
(a) (Banach/Minkowski spaces with Lebesgue measures)
A {\it Minkowski norm} $|\cdot|$ on $\R^n$ is a nonsymmetric generalization
of usual norms.
That is to say, $|\cdot|$ is a nonnegative function on $\R^n$ satisfying
the positive homogeneity $|\lambda v|=\lambda |v|$ for $v \in \R^n$
and $\lambda>0$; the convexity $|v+w| \le |v|+|w|$ for $v,w \in \R^n$;
and the positivity $|v|>0$ for $v \neq 0$.
Note that the unit ball of $|\cdot|$ is a convex (but not necessarily symmetric
to the origin) domain containing the origin in its interior
(see Figure~10, where $F$ is a Minkowski norm).

A Banach or Minkowski norm $|\cdot|$ which is $C^{\infty}$ on
$\R^n \setminus \{0\}$ induces a Finsler structure
in a natural way through the identification between $T_x\R^n$ and $\R^n$.
Then $(\R^n,|\cdot|,\vol_n)$ has the flat flag curvature.
Hence a Banach or Minkowski space $(\R^n,|\cdot|,\vol_n)$ satisfies $\CD(0,n)$
by Theorem~\ref{th:FCD} for $C^{\infty}$-norms,
and by Theorem~\ref{th:stab} via approximations for general norms.

(b) (Banach/Minkowski spaces with log-concave measures)
A Banach or Minkowski space $(\R^n,|\cdot|,m)$ equipped with
a measure $m=e^{-\psi}\vol_n$ such that $\psi$ is $K$-convex
with respect to $|\cdot|$ satisfies $\CD(K,\infty)$.
Here the {\it $K$-convexity} means that
\[ \psi\big( (1-t)x+ty \big) \le (1-t)\psi(x) +t\psi(y) -\frac{K}{2}(1-t)t|x-y|^2 \]
holds for all $x,y \in \R^n$ and $t \in [0,1]$.
This is equivalent to $\del_v^2 \psi \ge K$ (in the sense of $(\ref{eq:Psi})$)
if $|\cdot|$ and $\psi$ are $C^{\infty}$
(on $\R^n \setminus \{0\}$ and $\R^n$, respectively).
Hence $\CD(K,\infty)$ again follows from Theorem~\ref{th:FCD}
together with Theorem~\ref{th:stab}.

In particular, a Gaussian type space $(\R^n,|\cdot|,e^{-|\cdot|^2/2}\vol_n)$
satisfies $\CD(0,\infty)$ independently of $n$.
It also satisfies $\CD(K,\infty)$ for some $K>0$ if (and only if) it is
{\it $2$-uniformly convex} in the sense that $|\cdot|^2/2$ is
$C^{-2}$-convex for some $C \ge 1$ (see \cite{BCL} and \cite{Ouni}),
and then $K=C^{-2}$.
For instance, $\ell_p$-spaces with $p \in (1,2]$ are $2$-uniformly convex
with $C=1/\sqrt{p-1}$, and hence satisfies $\CD(p-1,\infty)$.
Compare this with Example~\ref{ex:wRic}.

(c) (Randers spaces)
A {\it Randers space} $(M,F)$ is a special kind of Finsler manifold such that
\[ F(v)=\sqrt{g(v,v)} +\beta(v) \]
for some Riemannian metric $g$ and a one-form $\beta$.
We suppose that $|\beta(v)|^2 <g(v,v)$ unless $v=0$,
then $F$ is indeed a Finsler structure.
Randers spaces are important in applications and reasonable
for concrete calculations.
In fact, we can see by calculation that $\bS(v)=\del_v\Psi \equiv 0$
holds if and only if $\beta$ is a Killing form of constant length
as well as $m$ is the Busemann-Hausdorff measure
(see \cite{Orand}, \cite[Section~7.3]{Shlec} for more details).
This means that there are many Finsler manifolds which do not admit any measures
of $\Ric_n \ge K>-\infty$, and then we must consider $\Ric_N$ for $N>n$.

(d) (Hilbert geometry)
Let $D \subset \R^n$ be a bounded open set with smooth boundary
such that its closure $\overline{D}$ is strictly convex.
Then the associated {\it Hilbert distance} is defined by
\[ d(x_1,x_2):=\log \bigg(
 \frac{\|x_1-x'_2\| \cdot \|x_2-x'_1\|}{\|x_1-x'_1\| \cdot \|x_2-x'_2\|} \bigg) \]
for distinct $x_1,x_2 \in D$, where $\|\cdot\|$ is the standard Euclidean norm
and $x'_1,x'_2$ are intersections of $\del D$ and the line passing through
$x_1,x_2$ such that $x'_i$ is on the side of $x_i$.
Hilbert geometry is known to be realized by a Finsler structure
with constant negative flag curvature.
However, it is still unclear if it carries a (natural) measure for which
the curvature-dimension condition holds.

(e) (Teichm\"uller space)
Teichm\"uller metric on Teichm\"uller space is one of the most famous
Finsler structures in differential geometry.
It is known to be complete, while the Weil-Petersson metric is incomplete
and Riemannian.
The author does not know any investigation concerned with
the curvature-dimension condition of Teichm\"uller space.
\end{example}

\subsection{Remarks and potential applications}\label{ssc:Frm}%%%%%%%%%

Due to celebrated work of Cheeger and Colding~\cite{CC}, we know that
a (non-Hilbert) Banach space can not be the limit space of a sequence of
Riemannian manifolds (with respect to the measured Gromov-Hausdorff convergence)
with a uniform lower Ricci curvature bound.
Therefore the fact that Finsler manifolds satisfy the curvature-dimension condition
means that it is too weak to characterize limit spaces of Riemannian manifolds.
This should be compared with the following facts.

(I) A Banach space can be an Alexandrov space only if it happens to be
a Hilbert space (and then it has the nonnegative curvature);

(II) It is not known if all Alexandrov spaces $X$ of curvature $\ge k$
can be approximated by a sequence of Riemannian manifolds
$\{ M_i \}_{i \in \N}$ of curvature $\ge k'$.

We know that there are counterexamples to (II) if we impose the
non-collapsing condition $\dim M_i \equiv \dim X$ (see \cite{Ka}),
but the general situation admitting collapsing ($\dim M_i>\dim X$)
is still open and is one of the most important and challenging
questions in Alexandrov geometry.
Thus the curvature-dimension condition is not as good as the Alexandrov-Toponogov
triangle comparison condition from the purely Riemannian geometric viewpoint.

From a different viewpoint, Cheeger and Colding's observation means that
the family of Finsler spaces is properly much wider than the family of
Riemannian spaces.
Therefore the validity of the curvature-dimension condition for Finsler manifolds
opens the door to broader applications.
Here we mention two of them.

(A) (The geometry of Banach spaces)
Although their interested spaces are common to some extent, there is almost
no connection between the geometry of Banach spaces and Finsler geometry
(as far as the author knows).
We believe that our differential geometric technique would be useful in
the geometry of Banach spaces.
For instance, Theorem~\ref{th:FCD} (together with Theorem~\ref{th:stab})
could recover and generalize Gromov and Milman's normal concentration
of unit spheres in $2$-uniformly convex Banach spaces
(see \cite{GM2} and \cite[Section~2.2]{Le}).
To be precise, as an application of Theorem~\ref{th:FCD},
we know the normal concentration of Finsler manifolds such that
$\Ric_{\infty}$ goes to infinity (see \cite{Oint}, and \cite{GM1},
\cite[Section~2.2]{Le} for the Riemannian case).
This seems to imply the concentration of unit spheres mentioned above.

(B) (Approximations of graphs)
Generally speaking, Finsler spaces give much better approximations of graphs
than Riemannian spaces, when we impose a lower Ricci curvature bound.
For instance, Riemannian spaces into which the $\Z^n$-lattice
is nearly isometrically embedded should have very negative curvature,
while the $\Z^n$-lattice is isometrically embedded in flat $\ell^n_1$.
This kind of technique seems useful for investigating graphs with
Ricci curvature bounded below (in some sense), and provides a different point
of view on variants of the curvature-dimension condition for discrete spaces
(see, e.g., \cite{Ol}, \cite{BoS}).

\begin{fread}
We refer to \cite{BCS} and \cite{Shlec} for the fundamentals
of Finsler geometry and important examples.
The interpretation of the flag curvature using vector fields
can be found in \cite[Chapter~6]{Shlec}.
We also refer to \cite{Shvol} and \cite{Shsam} for the $\bS$-curvature
and its applications including a volume comparison theorem
different from Theorem~\ref{th:CDBG} (which has some topological
applications).
The $\bS$-curvature of Randers spaces and the characterization
of its vanishing (Example~\ref{ex:Fins}(c)) are studied
in \cite[Section~7.3]{Shlec} and \cite{Orand}.

Definition~\ref{df:FRic} and Theorem~\ref{th:FCD} are due to \cite{Oint},
while the weight function $\Psi$ on $SM$ has already been considered
in the definition of $\bS$-curvature.
See also \cite{OhS} for related work concerning heat flow on Finsler manifolds,
and \cite{Osurv} for a survey on these subjects.
The curvature-dimension condition $\CD(0,n)$ of Banach spaces
(Example~\ref{ex:Fins}(a)) is first demonstrated by Cordero-Erausquin
(see \cite[page 908]{Vi2}).
\end{fread}

\section{Related topics}\label{sc:rel}%%%%%%%%%%%%%%%%%

We briefly comment on further related topics.

\subsection{Non-branching spaces}\label{ssc:nb}%%%%%%%%%

We say that a geodesic space $(X,d)$ is {\it non-branching} if geodesics do not branch
in the sense that each quadruple of points $z,x_0,x_1,x_2 \in X$
with $d(x_0,x_1)=d(x_0,x_2)=2d(z,x_i)$ $(i=0,1,2)$ must satisfy $x_1=x_2$.
This is a quite useful property.
For instance, in such a space satisfying $\CD(K,N)$ for some $K$ and $N$,
a.e.\ $x \in X$ has unique minimal geodesic from $x$ to a.e.\ $y \in X$
(\cite[Lemma~4.1]{StII}).
Therefore we can localize the inequality $(\ref{eq:URic})$, and then $(\ref{eq:URic})$
for single $U=S_N$ implies that for all $U \in \DC_N$
(see Remark~\ref{rm:CD}(e), (f), \cite[Proposition~4.2]{StII}).
There are some more results known only in non-branching spaces (see, e.g.,
\cite[Section~4]{StII}, \cite{FV} and also Subsection~\ref{ssc:mcp} below).

Riemannian (or Finsler) manifolds and Alexandrov spaces are clearly non-branching.
However, as $n$-dimensional Banach and Minkowski spaces satisfy $\CD(0,n)$,
the curvature-dimension condition does not prevent the branching phenomenon.
One big open problem after Cheeger and Colding's work~\cite{CC} is
whether any limit space of Riemannian manifolds with a uniform
lower Ricci curvature bound is non-branching or not.

\subsection{Alexandrov spaces}\label{ssc:Al}%%%%%%%%%%%

As was mentioned in Remark~\ref{rm:Alex}, Alexandrov spaces are metric spaces
whose sectional curvature is bounded from below in terms of the triangle
comparison property (see \cite{BGP}, \cite{OtS}, \cite[Chapters~4, 10]{BBI}
for more details).
One interesting fact is that a compact geodesic space $(X,d)$ is an Alexandrov space
of nonnegative curvature if and only if so is the Wasserstein space $(\cP(X),d^W_2)$
over it (\cite[Proposition~2.10]{StI}, \cite[Theorem~A.8]{LVI}).
This is a metric geometric explanation of Otto's formal calculation
of the sectional curvature of $(\cP_2(\R^n),d^W_2)$ (\cite{Ot}).
We remark that this relation can not be extended to positive or
negative curvature bounds.
In fact, if $(X,d)$ is not an Alexandrov space of nonnegative curvature,
then $(\cP(X),d^W_2)$ is not an Alexandrov space of curvature $\ge k$
even for negative $k$ (\cite[Proposition~2.10]{StI}).
Optimal transport in Alexandrov spaces is further studied in \cite{Be}, \cite{Ogra},
\cite{Sav}, \cite{Gi} and \cite{GO}.

Since the Ricci curvature is the trace of the sectional curvature,
it is natural to expect that Alexandrov spaces satisfy the curvature-dimension condition.
Petrunin \cite{Pecd} recently claims that it is indeed the case for $K=0$,
and is extended to the general case $K \neq 0$ by \cite{ZZ}.
They use the second variation formula in \cite{Pegfa} and
the gradient flow technique developed in \cite{PP} and \cite{Pesig},
instead of calculations as in Sections~\ref{sc:intro}, \ref{sc:CD}
involving Jacobi fields.

\subsection{The measure contraction property}\label{ssc:mcp}%%%%%%%%

For $K \in \R$ and $N \in (1,\infty)$, a metric measure space is said to satisfy
the {\it measure contraction property} $\MCP(K,N)$ if the Bishop inequality
$(\ref{eq:Bish})$ holds in an appropriate sense.
More precisely, $\MCP(K,N)$ for $(X,d,m)$ means that any $x \in X$
admits a measurable map $\Phi:X \lra \Gamma(X)$ satisfying
$e_0 \circ \Phi \equiv x$, $e_1 \circ \Phi =\Id_X$ and
\[ dm \ge (e_t \circ \Phi)_{\sharp}
 \Big( t^N \beta^t_{K,N}\big( d(x,y) \big) \,dm(y) \Big) \]
for all $t \in (0,1)$ as measures (compare this with Theorem~\ref{th:gBM}(i)).
As we mentioned in Further Reading in Section~\ref{sc:intro},
this kind of property was suggested in \cite[I, Appendix~2]{CC} and
\cite[Section 5.I]{Gr}, and systematically studied in \cite{Omcp},
\cite{Omcp2} and \cite[Sections~5, 6]{StII}.
Some variants have been also studied in \cite{KS1} and \cite{Staop}
before them.

$\MCP(K,N)$ can be regarded as the curvature-dimension condition $\CD(K,N)$ applied
only for each pair of a Dirac measure and a uniform distribution on a set,
and $\CD$ actually implies $\MCP$ in non-branching spaces.
It is known that Alexandrov spaces satisfy $\MCP$ (see \cite{Omcp}, \cite{KS2}).
For $n$-dimensional (unweighted) Riemannian manifolds,
$\MCP(K,n)$ is equivalent to $\Ric \ge K$, however,
$\MCP(K,N)$ with $N>n$ does not imply $\Ric \ge K$.
In fact, a sufficiently small ball in $\R^n$ equipped with the Lebesgue measure
satisfies $\MCP(1,n+1)$.
This is one drawback of $\MCP$.
On the other hand, an advantage of $\MCP$ is its simpleness,
there are several facts known for $\MCP$ and unknown for $\CD$.
We shall compare these properties in more details.

(A) (Product spaces ($L^2$-tensorization property))
If $(X_i,d_i,m_i)$ satisfies $\MCP(K_i,N_i)$ for $i=1,2$, then the product
metric measure space $(X_1 \times X_2,d_1 \times d_2,m_1 \times m_2)$
satisfies $\MCP(\min\{K_1,K_2\},N_1+N_2)$ (\cite{Omcp2}).
The analogous property for $\CD$ is known only for $\min\{K_1,K_2\}=0$
or $N_1+N_2=\infty$ in non-branching spaces (\cite{StI}).

Recently, Bacher and Sturm~\cite{BaS1} introduce a slightly weaker variant of $\CD$,
called the {\it reduced curvature-dimension condition} $\CD^*$
(recall $(\ref{eq:CD*})$ in Further Reading of Section~\ref{sc:CD}).
They show that $\CD^*$ enjoys the tensorization property
if the spaces in consideration are non-branching.

(B) (Euclidean cones)
If $(X,d,m)$ satisfies $\MCP(N-1,N)$, then its Euclidean cone
$(CX,d_{CX},m_{CX})$ defined by
\begin{align*}
CX &:= \big( X \times [0,\infty) \big)/\sim, \quad (x,0) \sim (y,0), \\
d_{CX}\big( (x,s),(y,t) \big) &:= \sqrt{s^2+t^2-2st\cos d(x,y)}, \\
dm_{CX} &:= dm \times (t^N \,dt)
\end{align*}
satisfies $\MCP(0,N+1)$ (\cite{Omcp2}).
This is recently established for the curvature-dimension condition by \cite{BaS2}
in the case where $(X,d,m)$ is Riemannian.

(C) (Local-to-global property)
Sturm~\cite{StI} shows that, if $(X,d,m)$ is non-branching and if every point in $X$
admits an open neighborhood on which $\CD(K,\infty)$ holds, then the whole space
$(X,d,m)$ globally satisfies $\CD(K,\infty)$.
In other words, $\CD(K,\infty)$ is a local condition as is the Ricci curvature
bound on a Riemannian manifold.
The same holds true also for $\CD(0,N)$ with $N<\infty$.
It is shown in \cite{BaS1} that $\CD^*(K,N)$ satisfies the local-to-global property
for general $K \in \R$ and $N\in (1,\infty)$, however, it is still open and
unclear if $\CD(K,N)$ for $K \neq 0$ and $N<\infty$ is a local condition.

In contrast, the local-to-global property is known to be false for $\MCP$.
As we mentioned above, sufficiently small balls in $\R^n$ satisfy
$\MCP(1,n+1)$, while the entire space $\R^n$ does not satisfy it by virtue of
the Bonnet-Myers diameter bound (Theorem~\ref{th:BMy}).

\end{document}